\def\IE{{\mathbb E}}
\def\IP{{\mathbb P}}
\def\IR{{\mathbb R}}
\def\IN{{\mathbb N}}
\def\IL{{\mathbb L}}
\def\IZ{{\mathbb Z}}
\def\n{\noindent}
\def\dsl{\textstyle\sum\limits}
\def\dis{\displaystyle}
\def\o{\omega}
\def\fr{\mbox{\footnotesize $\dis\frac{1}{2}$}}
\def\ov{\overline}
\def\ve{\varepsilon}
\def\f{\footnotesize}
\def\r{\rightarrow}
\def\point{{\mbox{\large $.$}}}
\def\wh{\widehat}
\def\wt{\widetilde}
\def\cC{{\cal C}}
\def\cE{{\cal E}}
\def\cL{{\cal L}}
\def\cE{{\cal E}}
\def\cI{{\cal I}}
\def\cF{{\cal F}}
\def\cV{{\cal V}}
\newtheorem{theorem}{Theorem}[section]
\newtheorem{lemma}[theorem]{Lemma}
\newtheorem{corollary}[theorem]{Corollary}
\newtheorem{proposition}[theorem]{Proposition}
\newtheorem{remark}[theorem]{Remark}
\begin{document}

\noindent

~

\bigskip
\begin{center}
{\bf DISCONNECTION AND LEVEL-SET PERCOLATION \\ FOR THE GAUSSIAN FREE FIELD}
\end{center}

\begin{center}
Alain-Sol Sznitman
\end{center}

\begin{center}
{\it Dedicated to the memory of Kiyosi Ito on the occasion \\ of the 100th anniversary of his birth}
\end{center}

\bigskip\bigskip
\begin{abstract}
We study the level-set percolation of the Gaussian free field on $\IZ^d$, $d \ge 3$. We consider a level $\alpha$ such that the excursion-set of the Gaussian free field above $\alpha$ percolates. We derive large deviation estimates on the probability that the excursion-set of the Gaussian free field {\it below} the level $\alpha$ disconnects a box of large side-length from the boundary of a larger homothetic box. It remains an open question whether our asymptotic upper and lower bounds are matching. With the help of a recent work of Lupu \cite{Lupu}, we are able to infer some asymptotic upper bounds for similar disconnection problems by random interlacements, or by simple random walk.
\end{abstract}

\vfill 

\n
Departement Mathematik \\
ETH Z\"urich\\
CH-8092 Z\"urich\\
Switzerland

\vfill

\n
$\overline{~~~~~~~~}$
\\
{\footnotesize{This research was supported in part by a Joseph Meyerhoff Visiting Professorship at the Weizmann Institute of Science in Rehovot, Israel.}}
~
\newpage
\thispagestyle{empty}
~

\newpage
\setcounter{page}{1}

 \setcounter{section}{-1}
 
 \section{Introduction}
 \setcounter{equation}{0}
 
Level-set percolation for the Gaussian free field has been of interest for quite some time by now. It is an eminent representative of percolation with long range dependence, see \cite{LeboSale86}, \cite{BricLeboMaes87},  \cite{Gare04}, \cite{Mari07}. Recently, methods stemming from the study of random interlacements have been successfully applied to the model, see \cite{RodrSzni13}, \cite{PopoRath}, and close links between random interlacements and the Gaussian free field have emerged, see \cite{Szni12b}, \cite{Lupu}. Motivated by the study of the shape of a large finite cluster at the origin in supercritical Bernoulli percolation, see \cite{AlexChayChay90}, \cite{Cerf00}, \cite{Grim99}, and by large deviation controls on the occupation-time profile of random interlacements, see \cite{LiSzni15}, asymptotic lower bounds on the probability of disconnection of a large macroscopic body by random interlacements, when the vacant set is in a percolative regime, have been derived in \cite{LiSzni14}. However, so far, there have not been any matching upper bounds to even capture the principal order of exponential decay. In this article, our main object is to investigate similar questions for the level-set percolation of the Gaussian free field on $\IZ^d$, $d \ge 3$. We consider a level $\alpha$ such that the excursion-set of the Gaussian free field above $\alpha$ percolates. We derive upper and lower bounds on the probability that a box of large side-length gets disconnected from the boundary of a larger homothetic box by the excursion-set of the Gaussian free field below level $\alpha$. As a by-product of our results and the recent improvement in \cite{Lupu} of the isomorphism theorem of \cite{Szni12b}, we also derive some upper bounds for the disconnection of a large box by random interlacements, or by simple random walk, which go beyond the current state of knowledge.

\medskip
We will now describe the model and our results more precisely. We consider $\IZ^d$, $d \ge 3$, and denote by $\IP$ the canonical law on $\IR^{\IZ^d}$ of the discrete Gaussian free field, and by $\varphi = (\varphi_x)_{x \in \IZ^d}$, the canonical process, so that under $\IP$,
\begin{equation}\label{0.1}
\begin{array}{l}
\mbox{$\varphi$ is a centered Gaussian process with covariance}
\\
\mbox{$\IE[\varphi_x \varphi_y] = g(x,y)$, for all $x,y \in \IZ^d$},
\end{array}
\end{equation}
where $g(\cdot,\cdot)$ stands for the Green function of the simple random walk on $\IZ^d$, see (\ref{1.1}).

\medskip
Given $\alpha$ in $\IR$, the excursion-set, or level-set, above $\alpha$ is defined as 
\begin{equation}\label{0.2}
E^{\ge \alpha} = \{x \in \IZ^d; \varphi_x \ge \alpha\}.
\end{equation}

\n
Combining results of  \cite{BricLeboMaes87}, \cite{RodrSzni13}, one knows that there is a critical value 
\begin{equation}\label{0.3}
0 \le h_* < \infty\,,
\end{equation}
such that
\begin{equation}\label{0.4}
\begin{array}{l}
\mbox{for $\alpha > h_*$, $\IP$-a.s., $E^{\ge \alpha}$ only has finite connected components},
\\
\mbox{for $\alpha < h_*$, $\IP$-a.s., $E^{\ge \alpha}$ has a unique infinite connected component}.
\end{array}
\end{equation}
One can further introduce a critical value
\begin{equation}\label{0.5}
h_{**} = \inf\{\alpha \in \IR; \liminf\limits_{L} \IP[B_L \stackrel{\ge \alpha}{\longleftrightarrow} \partial B_{2 L}] = 0\},
\end{equation}

\n
where $B_L$ stands for the sup-norm closed ball on $\IZ^d$ with center $0$ and radius $L$, $\partial B_{2L}$ for the boundary of $B_{2L}$ (see the beginning of Section 1), and the event under the probability corresponds to the existence of a nearest neighbor path in the excursion-set $E^{\ge \alpha}$ (see (\ref{0.2})) starting in $B_L$ and ending in $\partial B_{2L}$. One can show, see \cite{PopoRath},  \cite{RodrSzni13}, that $h_{**}$ is finite and that for $\alpha > h_{**}$ the excursion set $E^{\ge \alpha}$ is in a strongly non-percolative regime, see (\ref{1.19}), with a stretched exponential decay of the two-point function $\IP[0 \stackrel{\ge \alpha}{\longleftrightarrow}  x]$ (in fact, an exponential decay, when $d \ge 4$, see \cite{PopoRath}). It is a simple matter to see that $h_* \le h_{**}$, but an important open problem whether the equality $h_* = h_{**}$ actually holds. The positivity of $h_*$ when $d$ is large has been established in  \cite{RodrSzni13}, the (matching) principal asymptotic behaviors $h_* \sim h_{**} \sim \sqrt{2 \log d}$, as $d \r \infty$, have been shown in \cite{DrewRodr}.

\medskip
The main object of this article is to investigate the large $N$ asymptotic behavior of the probability of the disconnection event
\begin{equation}\label{0.6}
A_N =  \big\{\partial B_N \overset{^{\mbox{\footnotesize $\ge \alpha$}}}{\mbox{\Large $\longleftrightarrow$}} \hspace{-4ex}/ \quad \;S_N\big\}
\end{equation}

\n
where there is no nearest neighbor path in $E^{\ge \alpha}$ going from $\partial B_N$ to $S_N = \{x \in \IZ^d$; $|x|_\infty = [MN]\}$, where $|\cdot |_\infty$ stands for the sup-norm, $M > 1$ is a fixed number, and $[MN]$ denotes the integer part of $MN$. The choice of the specific form (\ref{0.6}) of $A_N$ (rather than for instance $\{B_N \stackrel{\ge \,\; \alpha}{\longleftrightarrow}\hspace{-3.8ex}/  \quad \partial B_{MN}\}$) is not essential and mainly motivated by the use of certain contour arguments in Section 3, where the choice (\ref{0.6}) is convenient. 

\medskip
In the strongly non-percolative regime $\alpha > h_{**}$ of the excursion set $E^{\ge \alpha}$, the event $A_N$ becomes typical for large $N$, and (see (\ref{1.20}))   
\begin{equation}\label{0.7}
\lim\limits_N \IP [A_N] = 1, \; \mbox{when $\alpha > h_{**}$}.
\end{equation}

\n
Our main interest in this work lies in the percolative regime, when $A_N$ becomes atypical for large $N$ (for instance, $\lim_N \IP[A_N] = 0$, when $\alpha < h_*$). Let us now describe the asymptotic controls that we obtain.

\medskip
In Theorem \ref{theo2.1} of Section 2 we derive the asymptotic lower bound
\begin{equation}\label{0.8}
\liminf\limits_{N} \; \mbox{\f $\dis\frac{1}{N^{d-2}}$} \; \log \IP[A_N] \ge - \mbox{\f $\dis\frac{1}{2d}$} \;(h_{**} - \alpha)^2 \, {\rm cap}_{\IR^d}([-1,1]^d), \; \mbox{for $\alpha \le h_{**}$}, \end{equation}

\n
where ${\rm cap}_{\IR^d}([-1,1]^d)$ stands for the Brownian capacity of $[-1,1]^d$, see for instance p.~58 of \cite{PortSton78}. This lower bound comes as a rather direct application of the change of probability measure approach, see (\ref{1.22}), and the controls on the strongly non-percolative regime in (\ref{1.19}). It is much simpler to establish than the corresponding lower bound for random interlacements that was derived in \cite{LiSzni14}.

\medskip
As far as upper bounds are concerned, we show in Theorem \ref{theo3.2} of Section 3 a simple upper bound based on a contour argument (recall that $0 \le h_*$, see (\ref{0.3})):
\begin{equation}\label{0.9}
\limsup\limits_N \; \mbox{\f $\dis\frac{1}{N^{d-2}}$} \; \log \IP[A_N] \le - \mbox{\f $\dis\frac{1}{2d}$} \; \alpha^2 {\rm cap}_{\IR^d}([-1,1]^d), \; \mbox{for $\alpha \le 0$}.
\end{equation}

\n
This upper bound does not match the lower bound (\ref{0.8}), unless $h_* = h_{**} = 0$. But, as mentioned above, $h_*$ is known to be positive when $d$ is large, and actually expected to be positive for all $d \ge 3$ (see \cite{Mari07} for simulations when $d = 3$).

\medskip
Our main result comes in Section 5. We show in Theorem \ref{theo5.5} that
\begin{equation}\label{0.10}
\limsup\limits_N \; \mbox{\f $\dis\frac{1}{N^{d-2}}$} \; \log \IP[A_N] \le - \mbox{\f $\dis\frac{1}{2d}$} \; (\overline{h} - \alpha)^2\; {\rm cap}_{\IR^d}([-1,1]^d), \; \mbox{for $\alpha \le \ov{h}$},
\end{equation}

\n
where $\ov{h}$ is a certain critical value, see (\ref{5.3}) for the precise definition, such that $\alpha < \ov{h}$ corresponds to a strongly percolative regime for $E^{\ge \alpha}$. This critical value is similar to the critical value introduced in Section 2 \S 4 of \cite{DrewRathSapo14b}. It is plausible that $\ov{h} = h_{**}$, and (\ref{0.8}) and (\ref{0.10}) may actually offer matching upper and lower bounds. However, apart from the finiteness of $\ov{h}$ and the inequality $\ov{h} \le h_{*}$ ($\le h_{**}$), little is known at present about the actual value of $\ov{h}$ (not even that $\ov{h} \ge 0$).

\medskip
For this reason, we derive in Section 6 a variant of our main upper bound of Section 5, which borrows some techniques of \cite{RodrSzni13} developed for the proof of the positivity of $h_*$ for large $d$. We show in Theorem \ref{theo6.2} that there is an $h_0 > 0$ and a $d_0 \ge 3$ such that for $d \ge d_0$,
\begin{equation}\label{0.11}
\limsup\limits_N \; \mbox{\f $\dis\frac{1}{N^{d-2}}$} \; \log \IP[A_N] \le - \mbox{\f $\dis\frac{1}{2d}$} \; (h_0 - \alpha)^2\; {\rm cap}_{\IR^d}([-1,1]^d), \; \mbox{for $\alpha < h_0$}.
\end{equation}

\n
In particular, this last inequality shows that when $d \ge d_0$ the upper bound (\ref{0.9}) does not capture the correct exponential decay of $\IP[A_N]$.

\medskip
As a by-product of our results, we also derive some upper bounds on disconnection by random interlacements, or by simple random walk. A recent version due to Lupu \cite{Lupu} of the isomorphism theorem relating occupation-times of random interlacements to the Gaussian free field, see \cite{Szni12b}, provides a tool to transfer the upper bounds (\ref{0.10}), (\ref{0.11}) to random interlacements. If $\cI^u$ stands for the random interlacement at level $u \ge 0$, and $\cV^u = \IZ^d \backslash \cI^u$ for the vacant set at level $u$, we show in Theorem \ref{theo7.1} that if $\ov{h} > 0$ (with similar notation as in (\ref{0.6}), (\ref{0.10}))
\begin{equation}\label{0.12}
\begin{split}
\limsup\limits_N \; \mbox{\f $\dis\frac{1}{N^{d-2}}$} \; \log \IP[\partial B_N \;\overset{^{\mbox{\footnotesize $\cV^u$}}}{\mbox{\Large $\longleftrightarrow$}} \hspace{-4ex}/ \quad \;S_N] \le & - \mbox{\f $\dis\frac{1}{d}$} \; \Big(\sqrt{\mbox{\f $\dis\frac{\ov{h}\,^2}{2}$}} - \sqrt{u}\Big)^2 \,{\rm cap}_{\IR^d} ([-1,1]^d),
\\
&\;\; \mbox{for $u < \mbox{\f $\dis\frac{\ov{h}\,^2}{2}$}$},
\end{split}
\end{equation}
and a similar bound holds with $h_0$ in place of $\ov{h}$, when $d \ge d_0$ and $u < \frac{h^2_0}{2}$.

\medskip
In the case of the simple random walk starting from the origin, we denote by $\cV$ the complement of the set of sites visited by the walk, and by $P_0$ the canonical law. There is a natural coupling of $\cV$ under $P_0$ and $\cV^u$ under $\IP[\cdot | 0 \in \cI^u]$ ensuring that $\cV^u \subseteq \cV$, and as a result, we show in Corollary \ref{cor7.3} that if $\ov{h} > 0$
\begin{equation}\label{0.13a}
\limsup\limits_N \; \mbox{\f $\dis\frac{1}{N^{d-2}}$} \; \log \IP[\partial B_N \;\overset{^{\mbox{\footnotesize $\cV$}}}{\mbox{\Large $\longleftrightarrow$}} \hspace{-4ex}/ \quad \;S_N] \le  - \mbox{\f $\dis\frac{1}{2d}$} \; \ov{h}\,^{\!2} \,{\rm cap}_{\IR^d} ([-1,1]^d),
\end{equation}
and a similar bound holds with $h_0$ in place of $\ov{h}$, when $d \ge d_0$.

\medskip
In particular, when $d \ge d_0$, Theorem \ref{theo7.1} and Corollary \ref{cor7.3} establish an exponential decay at rate $N^{d-2}$ for the disconnection probability that appears in (\ref{0.12}), for small positive $u$, and for the probability that appears in (\ref{0.13a}). As far as we know, even these coarse bounds are new. We also refer to (\ref{1.10}) and Theorem 6.1 of \cite{Wind08b} on a related problem for the simple random walk.

\medskip
It is maybe helpful at this point to give some intuition about the bounds we obtain. The picture behind the lower bound (\ref{0.8}) and its proof in Theorem \ref{theo2.1} is roughly that when $\alpha < h_{**}$, to induce the disconnection event $A_N$, the Gaussian free field lowers itself by an amount slightly bigger than $h_{**} - \alpha$ in a neighborhood of $B_N$, so that for this shifted random field, being above level $\alpha$, amounts to being slightly above level $h_{**}$ for the Gaussian free field, and this entails a strongly non-percolative regime of the corresponding excursion-set. The feature that the upper bound (\ref{0.10}) may actually match the lower bound (\ref{0.8}) hints at a phenomenon of {\it entropic repulsion}. In a way, the situation is reminiscent of what happens when one considers the probability that $\varphi$ remains non-negative over $B_N$. As shown in Theorem 1.1 of \cite{BoltDeusZeit95}, see also Theorem 3.1 of \cite{Giac03}, one has the asymptotic behavior
\begin{equation}\label{0.13}
\begin{split}
\lim\limits_N \; \mbox{\f $\dis\frac{1}{N^{d-2} \log N}$} \; \log \IP[\varphi_x \ge 0, \; \mbox{for all} \; x \in B_N] = - \mbox{\f $\dis\frac{4}{2d}$} \; g(0,0) \,{\rm cap}_{\IR^d}([-1,1]^d),
\end{split}
\end{equation}
and one can show that the random field ``tends to shift upwards over $B_N$'' at a height of order $\sqrt{4 g(0,0) \log N}$, when $\varphi$ remains non-negative over $B_N$. In the situation we consider in the present work, if the equalities $\ov{h} = h_* = h_{**}$ hold, the intuitive picture should be as follows. When one enforces the disconnection of $\partial B_N$ from $S_N$ by the excursion-set of the Gaussian free field below $\alpha$, the effect for large $N$ on the field, say, close to the origin, depends on $\alpha$. If $\alpha > h_*$, the disconnection has a negligible effect. On the other hand, if $\alpha < h_*$, the disconnection induces ``a downward shift of the field by an amount $-(h_* - \alpha)$''.   

\medskip
We will now present a rough outline of the proof of the main Theorem \ref{theo5.5}, where (\ref{0.10}) is established. Unlike what would happen in the supercritical phase of Bernoulli percolation, where disconnection of the macroscopic box $B_N$ would involve an exponential cost of order $N^{d-1}$ (and surface tension), we are looking here for an exponential cost of order $N^{d-2}$. A quite substantial coarse graining takes place in the proof and roughly goes as follows. One considers ``columns'' of boxes of side-length $L$ (chosen of order $(N \log N)^{\frac{1}{d-1}})$ going from the surface $\partial B_N$ of $B_N$ to the surface $S_N$ of $B_{MN}$ (for simplicity assume $M=2$). The number of such columns has roughly order $(\frac{N}{L})^{d-1} = \frac{N^{d-2}}{\log N}$. For each box $B$ sitting in one of these columns, one decomposes the Gaussian free field $\varphi$ in a concentric box with side-length a large multiple of $L$, into an harmonic average denoted by $h_B$, and a local field denoted by $\psi_B$. The local fields $\psi_B$ enjoy good independence properties, see Lemma \ref{lem5.3}. In a first step, one shows in Proposition \ref{prop5.4} that in ``almost all columns'' all boxes have a local field, which ``percolates well'' above a level slightly below the critical value $\ov{h}$, except on an event with super-exponentially decaying probability (with respect to the rate $N^{d-2}$). On the other hand, when disconnection at level $\alpha$ occurs (i.e. $A_N$ is realized), each column must be blocked for percolation in the excursion set $E^{\ge \alpha}$. This forces the existence in most columns of a box where the harmonic average field reaches values  essentially smaller than $\alpha - \ov{h} (< 0)$. After a selection of these bad boxes, a step with not too high combinatorial complexity, thanks to our choice of $L$, we can use the Gaussian estimates developed in Section 4 (see Corollary \ref{cor4.4}) to bound $\IP[A_N]$ in essence by $\exp\{-\frac{1}{2} \,(\ov{h} - \alpha)^2 \inf_{\cC}\, {\rm cap}(C) + o(N^{d-2})\}$, where ${\rm cap}(C)$ stands for the simple random walk capacity of $C = \bigcup_{B \in \cC} B$ and $\cC$ runs over the various collections of selected bad boxes. Using a projection on the surface of $B_N$ and a Wiener-type criterion, one gets an asymptotic lower bound on ${\rm cap}(C)$ in terms of ${\rm cap}(B_N)$, uniformly over $\cC$, and the upper bound (\ref{0.10}) quickly follows.

\medskip
Theorem \ref{theo6.2}, where (\ref{0.11}) is proven, follows the same strategy, but uses a different scheme for percolation in each column, along a thick two-dimensional slab. This procedure is developed in Theorem \ref{theo6.1} and uses techniques of Section 3 of \cite{RodrSzni13}. The Gaussian free field restricted to the slab can be viewed as a ``small perturbation'' of an i.i.d. field, when $d$ is large, and methods of Bernoulli percolation such as static renormalization, see Chapter 7 \S 4 of \cite{Grim99}, can be brought to bear.

\medskip
We will now describe the organization of this article. Section 1 introduces further notation and recalls various facts concerning random walk, potential theory, the Gaussian free field, and its level-set percolation. In the short Section 2 we prove the lower bound (\ref{0.8}), see Theorem \ref{theo2.1}. In Section 3 we employ a contour argument in the spirit of \cite{BricLeboMaes87} and prove (\ref{0.9}) in Theorem \ref{theo3.2}. In Section 4 we develop Gaussian bounds, which prepare the ground for the proofs of our main upper bounds in the next two sections, see Theorem \ref{theo4.2} and Corollary \ref{cor4.4}. Section 5 contains the proof of (\ref{0.10}) in the main Theorem \ref{theo5.5}. In Section 6 we prove (\ref{0.11}) in Theorem \ref{theo6.2}, using an adaptation of the proof of Theorem \ref{theo5.5}. The percolative estimates are developed in Theorem \ref{theo6.1}. In Section 7 we apply the upper bounds of Sections 5 and 6 to the derivation of (\ref{0.12}) in the context of random interlacements, and (\ref{0.13a}) in the context of simple random walk, see Theorem \ref{theo7.1} and Corollary \ref{cor7.3}.

\medskip
Finally, let us state the convention we use concerning constants. We denote by $c,c',\ov{c}$ positive constants changing from place to place, which simply depend on $d$. Numbered constants such as $c_0,c_1,\dots$ refer to the value corresponding to their first appearance in the text. Dependence of constants on additional parameters appears in the notation.

\bigskip\n
{\bf Acknowledgements:} We wish to thank Ofer Zeitouni for stimulating conversations and the Weizmann Institute of Science for its hospitality and support via a Joseph Meyerhoff Visiting Professorship.

\section{Notation and some useful facts}
\setcounter{equation}{0}

In this section we introduce some further notation and review some classical facts concerning random walks, potential theory, the Gaussian free field, the percolative properties of its level-sets, and an entropy inequality entering the change of probability method. These various ingredients will be useful in the subsequent sections.

\medskip
We begin with some notation. Given real numbers $s,t$, we write $s \wedge t$ and $s \vee t$ for the minimum and the maximum of $s$ and $t$, and denote by $[s]$ the integer part of $s$, when $s \ge 0$. We write $| \cdot |$ and $| \cdot |_\infty$ for the Euclidean and the $\ell^\infty$-norms on $\IR^d$. We tacitly assume throughout the article that $d \ge 3$. Given $x \in \IZ^d$ and $r \ge 0$, we let $B(x,r) = \{y \in \IZ^d$; $|y - x|_\infty \le r\}$ stand for the closed $\ell^\infty$-ball of radius $r$ around $x$. Given $K, K'$ subsets of $\IZ^d$, we denote by $d(K,K') = \inf\{|x - x'|_\infty$; $x \in K, x'\in K'\}$ the mutual $\ell^\infty$-distance between $K$ and $K'$. When $K = \{x\}$, we simply write $d(x,K')$. We write ${\rm diam}(K) = \sup\{|x-y|_\infty$; $x, y \in K\}$ for the $\ell^\infty$-diameter of $K$, $|K|$ for the cardinality of $K$, and $K \subset \subset \IZ^d$, to state that $K$ is a finite subset of $\IZ^d$. We denote by $\partial K = \{y \in \IZ^d \backslash K$; $\exists x \in K$, $|y-x| = 1\}$ the boundary of $K$, and by $\partial_i K = \{x \in K$; $\exists y \in \IZ^d \backslash K$, $|x - y| = 1\}$ the internal boundary of $K$.

\medskip
We say that $x,y$ in $\IZ^d$ are neighbors, when $|x - y| = 1$, and sometimes write $x \sim y$. We say that they are $*$-neighbors when $|x-y|_\infty = 1$. We call $\pi: \{0,\dots,n\} \rightarrow \IZ^d$ a path when $\pi(i) \sim \pi(i-1)$, for $1 \le i \le n$. We define a $*$-path accordingly. Given $K,L,U$ subsets of $\IZ^d$, we say that $K$ and $L$ are connected by $U$ and write $K \stackrel{U}{\longleftrightarrow} L$, when there exists a path with values in $U$ ($\subseteq \IZ^d$), which starts in $K$ and ends in $L$. Otherwise we say that $K$ and $L$ are not connected by $U$, and write $K\stackrel{^U}{\longleftrightarrow}\hspace{-3.5ex}/ \quad L$ (see for instance (\ref{0.12})).

\medskip
We now turn to discrete time simple random walk on $\IZ^d$. we denote by $(X_n)_{n \ge 0}$ the canonical process on $(\IZ^d)^\IN$ and by $P_x$ the canonical law starting from $x$. We write $E_x$ for the corresponding expectation. When $\rho$ is a measure on $\IZ^d$, we write $P_\rho = \sum_x \rho(x) P_x$ (not necessarily a probability measure) and $E_\rho$ for the corresponding ``expectation'' (i.e. the integral with respect to $P_\rho$). Given $U \subseteq \IZ^d$, we denote by $H_U = \inf\{ n \ge 0; X_n \in U\}$ the entrance time in $U$, by $\wt{H}_U = \inf\{n \ge 1, X_n \in U\}$ the hitting time of $U$, and by $T_U = \inf\{n \ge 0, X_n \notin U\}$ the exit time from $U$.

\medskip
We let $g(\cdot,\cdot)$ stand for the Green function of the walk,
\begin{equation}\label{1.1}
g(x,y) = \dis\sum\limits_{n \ge 0} P_x [X_n = y], \;\mbox{for $x,y \in \IZ^d$}.
\end{equation}

\n
Since $d \ge 3$, the Green function is finite. Due to translation invariance, one has $g(x,y) = g(x-y,0) \stackrel{{\rm def}}{=} g(x-y)$, and one knows that (see Theorem 5.4, p.~31 of \cite{Lawl91})
\begin{equation}\label{1.2}
g(x) \sim c_0 |x|^{2-d}, \;\mbox{as $|x| \r \infty$, with $c_0 = \mbox{\f $\dis\frac{d}{2}$} \;\Gamma \Big(\dis\mbox{\f $\dis\frac{d}{2}$} - 1\Big) \;\mbox{\f $\dis\frac{1}{\pi^{\frac{d}{2}}}$}$}\,.
\end{equation}
Given $U \subseteq \IZ^d$, the Green function killed outside $U$ is defined as
\begin{equation}\label{1.3}
g_U(x,y) = \dis\sum\limits_{n \ge 0} P_x [X_n = y, n < T_U]\,.
\end{equation}
It is a symmetric function of $x$ and $y$, which vanishes if $x \notin U$ or $y \notin U$. As a direct application of the strong Markov property at the exit time of $U$, we have
\begin{equation}\label{1.4}
g(x,y) = g_U (x,y) + E_x[T_U < \infty, g(X_{T_U},y)], \; \mbox{for} \; x,y \in \IZ^d.\end{equation}

\n
We then discuss some potential theory attached to simple random walk. Given $K \subset \subset \IZ^d$, we write $e_K$ for the equilibrium measure of $K$:
\begin{equation}\label{1.5}
e_K(x) = P_x [\wt{H}_K = \infty] \,1_K (x), \; \mbox{for $x \in \IZ^d$}
\end{equation}
(it is supported by the internal boundary of $K$), and ${\rm cap}(K)$ for the capacity of $K$, which is the total mass of $e_K$:
\begin{equation}\label{1.6}
{\rm cap}(K) = \dis\sum\limits_{x \in K} e_K(x).
\end{equation}
In the special case of the $|\cdot|_\infty$-ball $B_L (= B(0,L))$, one knows that (see (2.16), p.~53 of \cite{Lawl91})
\begin{equation}\label{1.7}
c L^{d-2} \le {\rm cap}(B_L) \le c' L^{d-2}, \; \mbox{for $L \ge 1$}.\end{equation}

\n
One also has (see for instance (2.10) on p.~18 of \cite{Woes00}) a characterization of the capacity through the Dirichlet form:
\begin{equation}\label{1.8}
{\rm cap}(K) = \inf\limits_f \, \cE(f,f),
\end{equation}
where $f$ varies over the set of finitely supported functions on $\IZ^d$ with value at least $1$ on $K$, and for $g$: $\IZ^d \r \IR$
\begin{equation}\label{1.9}
\cE(g,g) = \fr \;\dis\sum\limits_{x \sim y} \;   \mbox{\f $\dis\frac{1}{2d}$} \;\big(g(y) - g(x)\big)^2
\end{equation}
(one also defines $\cE(f,g) = \frac{1}{2}\;  \mbox{\footnotesize $\sum_{x \sim y}\; \frac{1}{2d} (f(y) - f(x)) (g(y) - g(x))$}$ by polarization of the above formula when the resulting series is absolutely convergent).

\medskip
One has a further variational characterization of the capacity, which is convenient for the derivation of lower bounds on the capacity
\begin{equation}\label{1.10}
{\rm cap}(K) = \{ \inf\limits_{\nu} E(\nu)\}^{-1}, \;\mbox{with} \; E(\nu) = \dis\sum\limits_{x,y} \nu(x) \,\nu(y) \,g(x,y),
\end{equation}
and the infimum runs over probability measures supported on $K$.

\medskip
We also recall (see for instance Theorem T.1, p.~300 of \cite{Spit01}) that the entrance probability in $K$ can be expressed in terms of the Green function and the equilibrium measure as
\begin{equation}\label{1.11}
P_x[H_K < \infty] = \dis\sum\limits_{y \in K} g(x,y) \,e_K(y), \;\mbox{for} \; x \in \IZ^d.
\end{equation}
Further, one has the sweeping identity for $K \subset K' \subset \subset \IZ^d$
\begin{equation}\label{1.12}
e_K(y) = P_{e_{K'}} [H_K < \infty, X_{H_K} = y], \;\mbox{for all $y \in \IZ^d$}
\end{equation}
(see for instance (1.18) of \cite{LiSzni14}, and we recall the notation above (\ref{1.1}) for $P_{e_{K'}}$). 

\medskip
We now turn to the Gaussian free field on $\IZ^d$. We denote by $\IP$ its canonical law on $\IR^{\IZ^d}$ and by $\varphi = (\varphi_x)_{x \in \IZ^d}$ the canonical random field, so that under $\IP$, $\varphi$ is a centered Gaussian field with covariance
\begin{equation}\label{1.13}
\IE[\varphi_x \varphi_y] = g(x,y), \; \mbox{for $x,y \in \IZ^d$}.
\end{equation}

\n
Further, for finitely supported $f$ one has (see below (\ref{1.9}) for notation)
\begin{equation}\label{1.14}
\left\{ \begin{array}{rl}
{\rm i)} & \quad E[\cE(f,\varphi) \,\varphi_x] = f_x, \; \mbox{for $x \in \IZ^d$}
\\[1ex]
{\rm ii)} & \quad E[\cE(f,\varphi)^2] = \cE(f,f).
\end{array}\right.
\end{equation}

\medskip\n
(the second identity readily follows from (\ref{1.14})~i) and the Gauss-Green identity $\cE(f,g) =  \mbox{\footnotesize $- \sum_{x \in \IZ^d} \Delta f(x) \,g(x)$}$, for $f,g$ functions on $\IZ^d$, with $f$ finitely supported, and $\Delta f(x) = \linebreak \frac{1}{2d}  \mbox{\footnotesize $\sum_{y \sim x} (f(y) - f(x))$}$. The first identity (\ref{1.14})~i) follows from (\ref{1.13}) and the Gauss-Green identity).

\medskip
Given $U \subset \subset \IZ^d$ (i.e. $U$ finite subset of $\IZ^d$, in the notation at the beginning of this section), one defines the random fields
\begin{align}
&h^U_x = E_x[\varphi_{X_{T_U}}] \Big(= \dis\sum\limits_{y \in \IZ^d} P_x[X_{T_U} = y] \,\varphi_y\Big) , \; \mbox{for $x \in \IZ^d$} \label{1.15}
\\
&\mbox{(note that $h^U_x = \varphi_x$ for $x \in  \IZ^d \backslash U$), and}  \nonumber
\\[2ex]
&\psi^U_x = \varphi_x - h^U_x, \;\mbox{for $x \in \IZ^d$} \label{1.16} 
\\
&\mbox{(note that $\psi^U_x = 0$, when $x \in \IZ^d \backslash U$)}. \nonumber
\end{align}

\n
We will sometimes refer to $h^U$ as the harmonic average of $\varphi$ in $U$, and to $\psi^U$ as the local field in $U$. One thus has the decomposition
\begin{equation}\label{1.17}
\varphi_x = h^U_x + \psi^U_x, \;x \in \IZ^d,
\end{equation}

\n
and the Markov property of the Gaussian free field can be expressed as the fact (see for instance Lemma 1.2 of \cite{RodrSzni13}):
\begin{equation}\label{1.18}
\begin{array}{l}
\mbox{$(\psi^U_x)_{x \in \IZ^d}$ is independent of $\sigma(\varphi_y, y \in U^c)$ (in particular it is independent from} 
\\
\mbox{$(h^U_x)_{x \in \IZ^d})$ and distributed as a centered Gaussian field with covariance $g_U(\cdot,\cdot)$}.
\end{array}
\end{equation}

\medskip\n
Hence, the conditional law of $(\varphi_x)_{x \in U}$ given $\sigma(\varphi_y,y \in U^c)$ only depends on $(h^U_x)_{x \in U}$, which is $\sigma(\varphi_y,y \in \partial U)$-measurable, see (\ref{1.15}).

\medskip
Level-set percolation of the Gaussian free field was discussed in the Introduction. We simply recall here the following feature of the strongly non-percolative regime for $E^{\ge \alpha}$. We remind that $h_{**}$ has been defined in (\ref{0.5}) (see also Theorem 2.6 of \cite{RodrSzni13} and Theorem 2.1 of \cite{PopoRath}) and that $0 \le h_* \le h_{**} < \infty$. One also knows (see above references) that
\begin{equation}\label{1.19}
\mbox{for} \; \alpha > h_{**}, \;\IP[0 \stackrel{\ge \alpha}{\longleftrightarrow} \partial B_L] \le c_1(\alpha) \,e^{-c_2(\alpha)L^{c_3}}, \;\mbox{for $L \ge 0$}
\end{equation}

\n
(actually when $d \ge 4$ one can choose $c_3 = 1$, and when $d = 3$, $c_3 = \frac{1}{2}$ or any value in $(0,1)$, see \cite{PopoRath}).

\medskip
As a direct application of a union bound and (\ref{1.19}) one finds that, in the notation of (\ref{0.6}), when $\alpha > h_{**}$, $\IP[\partial B_N \stackrel{\ge \alpha}{\longleftrightarrow} S_N] \underset{N}{\longrightarrow} 0$. We thus see that
\begin{equation}\label{1.20}
\IP[A_N] \underset{N}{\longrightarrow} 1, \;\mbox{when $\alpha > h_{**}$}.
\end{equation}

\n
Finally, we recall a classical inequality concerning the relative entropy, which will be useful in the next section. For $\wt{\IP}$ absolutely continuous with respect to $\IP$, the relative entropy of $\wt{\IP}$ with respect to $\IP$ is defined as
\begin{equation}\label{1.21}
H(\wt{\IP}|\IP) = \wt{\IE} \Big[\log \mbox{\f $\dis\frac{d \wt{\IP}}{d\IP}$} \Big] = \IE \Big[\mbox{\f $\dis\frac{d\wt{\IP}}{d\IP}$} \;\log\mbox{\f $\dis\frac{d\wt{\IP}}{d\IP}$}\Big] \in [0,\infty],
\end{equation}
where $\wt{\IE}$ stands for the expectation with respect to $\wt{\IP}$.

\medskip
Given an event $A$ with the positive $\wt{\IP}$-probability one has (see p.~76 of \cite{DeusStro89}):
\begin{equation}\label{1.22}
\IP[A] \ge \wt{\IP}[A] \,e^{-\frac{1}{\wt{\IP}[A]} (H(\wt{\IP}|\IP) +\frac{1}{e})}.
\end{equation}

\section{Disconnection lower bound}
\setcounter{equation}{0}

We have seen in (\ref{1.20}) that when the level $\alpha$ is bigger than $h_{**}$ the event $A_N$ (see (\ref{0.6}) or (\ref{2.1}) below) becomes typical for large $N$. In this section, we instead consider a level $\alpha \le h_{**}$, and derive in Theorem \ref{theo2.1} an asymptotic lower bound on the probability of $A_N$. This is very much in the spirit of the lower  bound obtained in Theorem 0.1 of \cite{LiSzni14} in the context of random interlacements, see also Section 7 below. The situation is however substantially simpler for the level-set percolation of the Gaussian free field: the method of change of probability merely involves a deterministic shift of the Gaussian free field (in \cite{LiSzni14} one needed the so-called ``tilted interlacements''). One also has here an additional simplifying feature: in the present work we simply discuss the disconnection of the discrete blow-up $B_N$ of $[-1,1]^d$ (in \cite{LiSzni14} one considered the discrete blow-up of a general compact subset of $\IR^d$).

\medskip
We recall that $M > 1$ is some given real number, and the basic disconnection event at level $\alpha$ (in $\IR$) is
\begin{equation}\label{2.1}
A_N = \{\partial B_N  \stackrel{\ge \alpha}{\mbox{\Large $\longleftrightarrow$}}\hspace{-4ex}\mbox{\footnotesize $/$} \quad \;S_N\} \;\;\mbox{(with $S_N = \{x \in \IZ^d; \,|x|_\infty = [MN]\}$)}.
\end{equation}

\n
Note that when $MN \ge N+1$ the event $A_N$ increases with $M$ (or perhaps more directly, the complementary event decreases with $M$).

\medskip
We recall that ${\rm cap}_{\IR^d}(\cdot)$ stands for the Brownian capacity (see for instance \cite{PortSton78}, p.~58). The main result of this section is the following
\begin{theorem}\label{theo2.1}
Assume that $\alpha \le h_{**}$, then one has
\begin{equation}\label{2.2}
\liminf_N \;\mbox{\f $\dis\frac{1}{N^{d-2}}$} \; \log \IP[A_N] \ge - \mbox{\f $\dis\frac{1}{2d}$} \;(h_{**} - \alpha)^2 \;{\rm cap}_{\IR^d}([-1,1]^d).
\end{equation}
\end{theorem}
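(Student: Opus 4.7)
The plan is to apply the change-of-probability method supplied by the entropy inequality (\ref{1.22}): construct a tilted probability $\wt{\IP}$ on $\IR^{\IZ^d}$ such that $\wt{\IP}[A_N] \to 1$ and whose relative entropy with respect to $\IP$ matches the right-hand side of (\ref{2.2}) to leading order. The tilt will be a deterministic Cameron-Martin shift of the Gaussian free field. For a finite-energy function $f_N \ge 0$, let $\wt{\IP}$ be the pushforward of $\IP$ under $\varphi \mapsto \varphi - f_N$, i.e.\ the Gaussian measure with mean $-f_N$ and covariance $g(\cdot,\cdot)$. A direct computation based on (\ref{1.14}) gives
\[
H(\wt{\IP}\,|\,\IP) = \tfrac{1}{2}\,\cE(f_N, f_N).
\]

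Fix small $\ve,\delta>0$ and let $B_N^+ = B_{[(1+\delta)N]}$. I would take $f_N = (h_{**}-\alpha+\ve)\,h_N^+$, where $h_N^+(x) = P_x[H_{B_N^+} < \infty]$ is the equilibrium potential of $B_N^+$ (possibly truncated at a large macroscopic scale to ensure finite support, with negligible energy correction). By construction $f_N \equiv h_{**}-\alpha+\ve$ on $B_N^+$, and since the equilibrium potential saturates the variational characterization (\ref{1.8}), one has $\cE(f_N,f_N) = (h_{**}-\alpha+\ve)^2\,\mathrm{cap}(B_N^+)$. Combining the Green-function asymptotics (\ref{1.2}) with the energy representation (\ref{1.10}) yields
\[
\lim_N \frac{\mathrm{cap}(B_N^+)}{N^{d-2}} = \frac{(1+\delta)^{d-2}}{d}\,\mathrm{cap}_{\IR^d}([-1,1]^d).
\]

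The main step is to show $\wt{\IP}[A_N] \to 1$. Under the shift, $\wt{\IP}[A_N^c]$ equals the $\IP$-probability that $\partial B_N$ and $S_N$ are joined by a nearest-neighbor path along which $\varphi \ge \alpha + f_N$. On $B_N^+$ this condition reads $\varphi \ge h_{**} + \ve$, so any such path starting at $x \in \partial B_N$ must reach $\ell^\infty$-distance at least $[\delta N]-1$ from $x$ within $E^{\ge h_{**}+\ve}$ before being able to exit $B_N^+$ and proceed to $S_N$. Since $h_{**}+\ve$ lies in the strongly non-percolative regime, (\ref{1.19}) combined with a union bound over the $O(N^{d-1})$ starting points in $\partial B_N$ gives
\[
\wt{\IP}[A_N^c] \le c(\alpha,\ve)\, N^{d-1}\, \exp\!\left(-c'(\alpha,\ve)(\delta N)^{c_3}\right) \underset{N}{\longrightarrow} 0.
\]

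Inserting these estimates into (\ref{1.22}), taking logarithms, dividing by $N^{d-2}$, and then sending $N \to \infty$ followed by $\ve,\delta \downarrow 0$ in turn produces (\ref{2.2}). The main technical nuisance is precisely the need for the enlargement $B_N \subset B_N^+$ in the third step: one wants the shift to equal $h_{**}-\alpha+\ve$ on a macroscopic annular shell around $B_N$ so that (\ref{1.19}) can actually block all crossings of this shell, and the cost of the enlargement is a capacity inflation by the harmless factor $(1+\delta)^{d-2}$, which disappears in the final limit $\delta \downarrow 0$.
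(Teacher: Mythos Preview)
Your proposal is correct and follows essentially the same strategy as the paper: a Cameron--Martin shift of the field by roughly $-(h_{**}-\alpha+\ve)$ on a slightly enlarged box, the strongly non-percolative bound (\ref{1.19}) to force $\wt{\IP}[A_N]\to 1$, and the entropy inequality (\ref{1.22}) to conclude. The one implementation difference is that the paper takes $f_N(x)=g(x/N)$ for a smooth compactly supported $g\le -(h_{**}-\alpha+\ve)$ on $[-(1+\eta),1+\eta]^d$, passes to $\frac{1}{d}\,\cE_{\IR^d}(g,g)$ by a Riemann-sum argument, and then optimizes over $g$ to reach the Brownian capacity; you instead shift by the discrete equilibrium potential of $B_N^+$ and quote directly $N^{-(d-2)}\,\mathrm{cap}(B_N^+)\to d^{-1}(1+\delta)^{d-2}\,\mathrm{cap}_{\IR^d}([-1,1]^d)$. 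Your route is a touch more direct but needs the truncation you flag (since the equilibrium potential is not finitely supported and the paper's Cameron--Martin identity (\ref{1.14}) is stated for finitely supported $f$), whereas the paper's smooth-function choice keeps $f_N$ finitely supported throughout.
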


\begin{proof}
We use the method of change of probability and the relative entropy inequality (\ref{1.22}). Given $f$: $\IZ^d \r \IR$, finitely supported, we introduce the probability (on $\IR^{\IZ^d}$):
\begin{equation}\label{2.3}
\wt{\IP} = \exp\Big\{\cE(f,\varphi) - \fr \;\cE(f,f)\Big\} \,\IP .
\end{equation}

\n
By (\ref{1.14}) and Cameron-Martin's formula, $\wt{\IP}$ is indeed a probability measure and
\begin{equation}\label{2.4}
\mbox{$\varphi$ under $\wt{\IP}$ has the same law as $(\varphi_x + f_x)_{x \in \IZ^d}$ under $\IP$.}
\end{equation}

\n
We now choose $\varepsilon, \eta > 0$ with $ 1 + \eta < M$, and a smooth function $g$ compactly supported in $(-M, M)^d$ and smaller or equal to $-(h_{**} - \alpha + \ve)$ on $[-(1+\eta), 1 + \eta]^d$. We define the sequence of finitely supported functions on $\IZ^d$:
\begin{equation}\label{2.5}
f_N(x) = g\Big(\mbox{\f $\dis\frac{x}{N}$}\Big), \;\mbox{for $x \in \IZ^d$, $N \ge 1$}.
\end{equation}
We denote by $\wt{\IP}_N$ the probability attached to $f_N$ (in place of $f$) by (\ref{2.3}). By the entropy inequality (\ref{1.22}), we know that
\begin{equation}\label{2.6}
\IP[A_N] \ge \wt{\IP}_N [A_N] \,\exp\Big\{- \mbox{\f $\dis\frac{1}{\wt{\IP}_N [A_N]}$} \Big(H(\wt{\IP}_N | \IP) + \mbox{\f $\dis\frac{1}{e}$}\Big)\Big\}.
\end{equation}
In addition, we find that
\begin{equation}\label{2.7}
H(\wt{\IP}_N | \IP) \underset{(\ref{2.3})}{\stackrel{(\ref{1.21})}{=}} \wt{\IE} [ \cE(f_N,\varphi)] - \fr \; \cE(f_N,f_N) \stackrel{(\ref{2.4})}{=} \fr \;\cE (f_N,f_N).
\end{equation}
The next step is to prove that
\begin{equation}\label{2.8}
\lim\limits_N \wt{\IP} [A_N] = 1.
\end{equation}
Indeed, by (\ref{2.4}), with hopefully obvious notation, we see that
\begin{equation}\label{2.9}
\wt{\IP}_N[A_N] = \IP[\partial B_N \;  \overset{^{\mbox{\footnotesize $\ge \alpha - f_N$}}}{\mbox{\Large $\longleftrightarrow$}} \hspace{-4.5ex}/ \quad \; S_N]
\end{equation}

\n
Since $f_N \le -(h_{**} - \alpha + \ve)$ on $B_{N(1 + \eta)}$, defining $\wt{S}_N = \{x \in \IZ^d; |x|_\infty = [(1 + \eta)N]\}$, we see that for large $N$ the probability in the right-hand side of (\ref{2.9}) is bigger or equal to
\begin{equation*}
\IP[\partial B_N  \overset{^{\mbox{\footnotesize $\ge h_{**} + \ve$}}}{\mbox{\Large $\longleftrightarrow$}} \hspace{-4.5ex}/ \quad \; \wt{S}_N] \underset{N}{\longrightarrow} 1 \;\; \mbox{(by (\ref{1.20}) with $M = 1 + \eta$ and $\alpha = h_{**} + \ve$)}.
\end{equation*}
This proves (\ref{2.8}).

\medskip
Coming back to (\ref{2.6}), we now find by (\ref{2.7}), (\ref{2.8}) that
\begin{equation}\label{2.10}
\liminf\limits_N \; \mbox{\f $\dis\frac{1}{N^{d-2}}$} \log \IP[A_N] \ge - \limsup\limits_N \; \mbox{\f $\dis\frac{1}{N^{d-2}}$} \; \fr \; \cE (f_N ,f_N).
\end{equation}
On the other hand, by (\ref{1.9}) and the choice of $f_N$ in (\ref{2.5}), we have
\begin{equation*}
\dis\frac{1}{N^{d-2}} \; \cE(f_N, f_N) = \mbox{\f $\dis\frac{1}{4 d N^d}$} \; \dis\sum\limits_{x \in \IZ^d} \; \dis\sum\limits_{|e| = 1} N^2 \Big(g\Big(\mbox{\f $\dis\frac{x + e}{N}$}\Big) - g\Big(\mbox{\f $\dis\frac{x}{N}$}\Big)\Big)^2\,.
\end{equation*}

\n
It now follows from the smoothness of $g$ and a Riemann sum argument that we have
\begin{equation}\label{2.11}
\lim\limits_N \; \mbox{\f $\dis\frac{1}{N^{d-2}}$} \; \cE(f_N,f_N) = \mbox{\f $\dis\frac{1}{2d}$} \; \dis\int |\nabla g(y)|^2 dy = \mbox{\f $\dis\frac{1}{d}$} \; \cE_{\IR^d} (g,g),
\end{equation}

\n
where $\cE_{\IR^d}(\cdot,\cdot)$ stands for the Dirichlet form attached to Brownian motion (and Lebesgue measure on $\IR^d$). Thus, optimizing over $g$ (see Lemma 2.2.7, p.~80 of \cite{FukuOshiTake94}, or below (2.28) of \cite{LiSzni14} for a very similar argument) that
\begin{equation}\label{2.12}
\liminf\limits_N \;\mbox{\f $\dis\frac{1}{N^{d-2}}$} \;\log \IP[A_N] \ge - \mbox{\f $\dis\frac{1}{2d}$} \;(h_{**} - \alpha + \ve)^2\, {\rm cap}_{\IR^d} ([-(1 + \eta), 1 + \eta]^d). 
\end{equation}
Letting $\eta$ and $\ve$ tend to $0$, we obtain the claim (\ref{2.2}).
\end{proof}

\begin{remark}\label{rem2.2} \rm
If we denote by $\{\partial B_N \overset{^{\mbox{\footnotesize $\ge \alpha$}}}{\longleftrightarrow} \hspace{-3.5ex}/ \quad \infty\}$ the event where there is no infinite nearest neighbor path in $E^{\ge \alpha}$ starting from $\partial B_N$, it is plain that this event contains $A_N$ when $MN \ge N+1$. Hence, as an immediate consequence of Theorem \ref{theo2.1}, we see that
\begin{equation}\label{2.13}
\liminf\limits_N \; \mbox{\f $\dis\frac{1}{N^{d-2}}$} \log \IP[\partial B_N \overset{^{\mbox{\footnotesize $\ge \alpha$}}}{\mbox{\Large $\longleftrightarrow$}} \hspace{-3.8ex}/ \quad \;\; \infty] \ge - \mbox{\f $\dis\frac{1}{2d}$} \;(h_{**} - \alpha)^2 {\rm cap}_{\IR^d} ([-1,1]^d).
\end{equation}

\hfill $\square$
\end{remark}

\section{A disconnection upper bound based on a contour argument}
\setcounter{equation}{0}

In this section we derive an upper bound on the probability of the disconnection event $A_N$, see (\ref{0.6}) or (\ref{2.1}), when the level $\alpha$ is negative. We use an argument in the spirit of the proof of Theorem 2 of \cite{BricLeboMaes87}, based on the notion of maximal contour surrounding $B_N$ in $B_{MN}$, where the Gaussian free field lies below $\alpha$. In this fashion, we use a form of strong Markov property of the Gaussian free field. Our main result Theorem \ref{theo3.2} yields both a quantitative upper bound, cf.~(\ref{3.6}), and an asymptotic upper bound, cf.~(\ref{3.7}). This asymptotic upper bound does not match the asymptotic lower bound from Theorem \ref{theo2.1} in the previous section, when $h_{**} > 0$ (and $h_* \le h_{**}$ is known to be positive for large $d$, cf.~Theorem 3.3 of \cite{RodrSzni13}). In the next sections we will aim at improving this defect.

\medskip
We first introduce some definitions concerning contours. We recall that we tacitly assume $d \ge 3$. Given $N \ge 0$, we say that $C \subseteq \IZ^d$ is a contour surrounding $B_N$, when there exists a finite connected subset $K$ of $\IZ^d$ containing $B_N$, such that $C = \partial K$. Given a contour $C$ surrounding $B_N$, the above set $K$ is uniquely determined (it is the connected component of $\IZ^d \backslash C$ containing $B_N$). We write $K = {\rm Int}\,C$.

\medskip
Given a finite family of contours $C_i, 1 \le i \le n$, surrounding $B_N$, we define the maximal contour via
\begin{equation}\label{3.1}
\max\{ C_1,\dots, C_n\} = \partial \Big(\bigcup\limits^n_{i=1} {\rm Int}\,C_i\Big),
\end{equation}
and we note that
\begin{equation}\label{3.2}
\max \{C_1,\dots,C_n\} \subseteq \bigcup\limits^n_{i=1} C_i .
\end{equation}
The next lemma relates the above notion of contour with the disconnection event $A_N$ (see (\ref{0.6}) or (\ref{2.1})).
\begin{lemma}\label{lem3.1} (recall $\alpha \in \IR$)

\medskip
Assume that $MN \ge N+1$, then
\begin{equation}\label{3.3}
\mbox{$A_N = \{\varphi$; there is a contour surrounding $B_N$ contained in $B_{MN}$, where $\varphi < \alpha\}$}.
\end{equation}
\end{lemma}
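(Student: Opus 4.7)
The statement is an equality of events, so my plan is to prove both inclusions separately.

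For the ``$\supseteq$'' direction, I will suppose $C = \partial K$ is a contour surrounding $B_N$, contained in $B_{MN}$ and with $\varphi < \alpha$ on $C$, and deduce $A_N$. First I would establish by a maximum-modulus observation that $K \subseteq B_{MN}$ and $K \cap S_N = \emptyset$: if some $x \in K$ attained $|x|_\infty \ge [MN]$, a neighbor $y$ of $x$ with $|y|_\infty = |x|_\infty + 1$ would necessarily lie in $\partial K = C$ (by maximality of $|x|_\infty$ on $K$), forcing $|y|_\infty > [MN]$ and contradicting $C \subseteq B_{MN}$. Since $B_N \subseteq K$, every site of $\partial B_N$ lies in $K \cup C$; any nearest-neighbor path from $\partial B_N$ to $S_N$ therefore either starts on $C$, or starts in $K$ and must exit $K$ at a step that lands in $\partial K = C$. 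In either case the path visits a site where $\varphi < \alpha$, so it cannot be contained in $E^{\ge\alpha}$, giving $A_N$.

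For the ``$\subseteq$'' direction, I would work on the event $A_N$ and construct the contour explicitly. The natural candidate is $K := $ the connected component of $B_N$ in the subgraph of $B_{MN}$ induced on the vertex set $B_N \cup (E^{\ge\alpha} \cap B_{MN})$; equivalently, $K$ is the set of sites in $B_{MN}$ reachable from $B_N$ by a nearest-neighbor path in $B_{MN}$ all of whose vertices lie in $B_N \cup E^{\ge\alpha}$. By construction $K$ is finite, connected, and contains $B_N$. The crux is to show, using $A_N$, that $K \cap S_N = \emptyset$: a site $x \in K \cap S_N$ would admit such a reaching path back to $B_N$, and the initial segment of that path up to the step just before entering $B_N$ would lie entirely in $E^{\ge\alpha}$ and terminate in $\partial B_N$, yielding a forbidden nearest-neighbor path in $E^{\ge\alpha}$ from $S_N$ to $\partial B_N$.

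Once $K \cap S_N = \emptyset$ is established, $C := \partial K$ has the required properties. Indeed, $K \subseteq B_{MN} \setminus S_N$, so every neighbor of a site of $K$ still lies in $B_{MN}$, and hence $\partial K \subseteq B_{MN}$. Moreover, if $y \in \partial K$ had $\varphi_y \ge \alpha$, then $y$ would sit in the induced subgraph and, being adjacent to some $x \in K$, could be prepended to the reaching path for $x$ to force $y \in K$, a contradiction; thus $\varphi < \alpha$ everywhere on $\partial K$. The main subtle point of the lemma is the interplay between $C \subseteq B_{MN}$ on the contour side and $K \cap S_N = \emptyset$ on the region side: the boundary constraint is precisely what prevents $K$ from protruding into $S_N$ in the forward direction, and what one must arrange by construction in the reverse direction.
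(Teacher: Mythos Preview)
Your proof is correct and follows essentially the same approach as the paper. For the inclusion $\tilde{A}_N \subseteq A_N$ the paper argues that the interior $K$ would be infinite if it met $\{|x|_\infty \ge [MN]\}$, while you reach the same conclusion via a maximum-modulus step; for $A_N \subseteq \tilde{A}_N$ the paper takes $K = B_N \cup \bigcup_{x \in \partial B_N} C_{\ge \alpha}(x)$ with the clusters $C_{\ge\alpha}(x)$ defined in all of $\IZ^d$, whereas you build $K$ as a connected component inside $B_{MN}$ --- on $A_N$ these two sets coincide, so the arguments are equivalent.
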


\begin{proof}
Denote by $\wt{A}_N$ the event on the right-hand side of (\ref{3.3}). We fist show that $\wt{A}_N \subseteq A_N$. Note that the interiors of the contours that appear in the definitions of $\wt{A}_N$ are necessarily contained in $B_{MN} \backslash S_N = \{x \in \IZ^d$; $|x|_\infty < [MN]\}$ (otherwise such an interior would contain all $x$ in $\IZ^d$ with $|x|_\infty > [MN]\}$ (otherwise such an interior would contain all $x$ in $\IZ^d$ with $|x|_\infty > [MN]$, and be infinite). Hence, on $\wt{A}_N$, any path from $B_N$ to $S_N$ must meet a contour $C$ surrounding $B_N$, where $\varphi < \alpha$, and therefore $A_N$ is realized. To prove the reverse inclusion $A_N \subseteq \wt{A}_N$, we argue as follows. For all $x \in \partial B_N$ we consider $C_{\ge \alpha} (x)$, the connected component of $x$ in $E^{\ge \alpha}$ (defined as the empty set when $\varphi_x < \alpha$). On $A_N$, the random connected subset $B_N \cup (\bigcup_{x \in \partial B_N} C_{\ge \alpha} (x)$) contains $B_N$ and does not intersect $S_N$ (by definition of $A_N$). Hence, its boundary is a contour surrounding $B_N$ and contained in $B_{MN}$, where $\varphi < \alpha$. This shows that $A_N \subseteq \wt{A}_N$ and completes the proof of the equality (\ref{3.3}).
\end{proof}

\medskip
By the above lemma, when $MN \ge N+1$, we can define on $A_N$
\begin{equation}\label{3.4}
\begin{split}
C_{< \alpha}^{\max} = &\; \mbox{the maximal contour in the family of contours surrounding $B_N$}
\\
& \;\mbox{contained in $B_{MN}$, where $\varphi < \alpha$}.
\end{split}
\end{equation}

\n
The crucial property (reminiscent of stopping times) satisfied by $C_{< \alpha}^{\max}$ is the following:
\begin{equation}\label{3.5}
\begin{array}{l}
\mbox{for any contour $C$ surrounding $B_N$ and contained in $B_{MN}$, the event}
\\
\mbox{$\{C_{< \alpha}^{\max} = C\}$ is $\sigma(\varphi_x, x \in U^c)$-measurable, with $U = {\rm Int} \,C$}.
\end{array}
\end{equation}

\medskip\n
Indeed, the above event is characterized by the fact that $\varphi < \alpha$ on $C$, and for any finite connected set $V \supsetneq U$ with $\partial V \subseteq B_{MN}$, one has $\varphi_x \ge \alpha$ for some $x \in \partial V$.

\medskip
We now come to the main result of this section.
\begin{theorem}\label{theo3.2} $(M > 1, \alpha < 0)$ 

\medskip
Assume $MN \ge N+1$. Then, in the notation of (\ref{2.1}) one has
\begin{equation}\label{3.6}
\IP[A_N] \le 2 \exp\Big\{ - \fr \;\alpha^2 {\rm cap}(B_N)\Big\}.
\end{equation}
Moreover, one has
\begin{equation}\label{3.7}
\limsup\limits_N \mbox{\f $\dis\frac{1}{N^{d-2}}$} \;\log \IP[A_N] \le - \mbox{\f $\dis\frac{1}{2d}$} \;\alpha^2 {\rm cap}_{\IR^d} ([-1,1]^d).
\end{equation}
\end{theorem}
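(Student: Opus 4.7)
The plan is to avoid a (doomed) union bound over contours by introducing a single centered Gaussian $Z$, independent of $C$, and exploiting the strong-Markov-like property (\ref{3.5}) together with the orthogonal decomposition (\ref{1.17})--(\ref{1.18}) to show that $A_N$ is covered, up to a factor of $2$, by the event $\{Z < \alpha\,{\rm cap}(B_N)\}$. The natural choice is
\[
Z \;=\; \sum_{x \in B_N} e_{B_N}(x)\,\varphi_x,
\]
which is a centered Gaussian with variance ${\rm cap}(B_N)$. Indeed, by (\ref{1.13}) and (\ref{1.11}),
\[
{\rm Var}(Z) \;=\; \sum_{x,y \in B_N} e_{B_N}(x)\,e_{B_N}(y)\,g(x,y) \;=\; \sum_{x \in B_N} e_{B_N}(x)\,P_x[H_{B_N}<\infty] \;=\; {\rm cap}(B_N),
\]
since $P_x[H_{B_N}<\infty]=1$ when $x \in B_N$.

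By Lemma \ref{lem3.1}, the events $E_C := \{C_{<\alpha}^{\max}=C\}$ form a measurable partition of $A_N$ as $C$ ranges over the contours surrounding $B_N$ and contained in $B_{MN}$. Fix such a $C$ and set $U = {\rm Int}\,C$. On $E_C$ one has $\varphi_y < \alpha$ for every $y \in C = \partial U$; since the walk started in the finite set $U$ exits $U$ almost surely through $\partial U$ (using $d \ge 3$), definition (\ref{1.15}) gives $h^U_x < \alpha$ for every $x \in U$, in particular for every $x \in B_N \subseteq U$. Decomposing $Z = Z_h + Z_\psi$ with $Z_h := \sum_{x \in B_N} e_{B_N}(x)\,h^U_x$ and $Z_\psi := \sum_{x \in B_N} e_{B_N}(x)\,\psi^U_x$, the non-negativity of $e_{B_N}$ combined with ${\rm cap}(B_N) > 0$ forces the deterministic bound $Z_h < \alpha\,{\rm cap}(B_N)$ on $E_C$. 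The decisive observation is (\ref{1.18}): $Z_\psi$ is a centered Gaussian independent of $\sigma(\varphi_y,\,y \in U^c)$, while by (\ref{3.5}) the event $E_C$ belongs to this $\sigma$-algebra. Hence $\IP[Z_\psi < 0 \mid E_C] = \tfrac{1}{2}$, so $\IP[Z < \alpha\,{\rm cap}(B_N),\,E_C] \ge \tfrac{1}{2}\IP[E_C]$. Summing over the disjoint $E_C$'s and applying the Chernoff bound for a centered Gaussian (valid since $\alpha < 0$) yields
\[
\IP[A_N] \;\le\; 2\,\IP[Z < \alpha\,{\rm cap}(B_N)] \;\le\; 2\,\exp\Big\{-\tfrac{1}{2}\alpha^2\,{\rm cap}(B_N)\Big\},
\]
which is (\ref{3.6}).

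To promote (\ref{3.6}) to the asymptotic statement (\ref{3.7}) I would invoke the scaling limit $\lim_N N^{2-d}\,{\rm cap}(B_N) = d^{-1}\,{\rm cap}_{\IR^d}([-1,1]^d)$, derivable from the variational formula (\ref{1.8}) together with a Riemann-sum argument in the spirit of (\ref{2.11}); inserting this into (\ref{3.6}) and taking $N^{2-d}\log(\cdot)$ produces (\ref{3.7}). The one genuine obstacle that the plan must circumvent is the temptation to use a direct union bound $\IP[A_N] \le \sum_C \IP[\varphi < \alpha \text{ on } C]$: this would fail because the number of admissible contours surrounding $B_N$ in $B_{MN}$ grows at least like $\exp(cN^{d-1})$, swamping the Gaussian decay $\exp(-cN^{d-2})$ of each term. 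The Markov decomposition above is precisely what collapses this supremum over contours into a harmless universal factor of $2$.
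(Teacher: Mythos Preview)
Your proof is correct and follows essentially the same route as the paper's own argument: both partition $A_N$ via the maximal contour $C^{\max}_{<\alpha}$, decompose $\varphi = h^U + \psi^U$ with $U = {\rm Int}\,C$, use harmonicity of $h^U$ together with $\varphi < \alpha$ on $\partial U$ to force the $h$-part below $\alpha\,{\rm cap}(B_N)$ (or $\alpha$, in the normalized version), invoke (\ref{3.5}) and (\ref{1.18}) for the independence of the $\psi$-part, and finish with a Gaussian tail bound and the scaling (\ref{3.14}). The only cosmetic difference is that the paper works with the normalized equilibrium measure $\bar{\nu} = e_{B_N}/{\rm cap}(B_N)$ and the threshold $\alpha$, whereas you use $e_{B_N}$ and the threshold $\alpha\,{\rm cap}(B_N)$; one minor sharpening would be to write $\IP[Z_\psi \le 0 \mid E_C] \ge \tfrac{1}{2}$ rather than asserting equality with strict inequality, which sidesteps any need to argue non-degeneracy of $Z_\psi$.
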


\begin{proof}
We first prove (\ref{3.6}). By Lemma \ref{lem3.1} and (\ref{3.4}) we can partition the event $A_N$ according to the different possibilities for $C^{\max}_{< \alpha}$ and write
\begin{equation}\label{3.8}
A_N = \bigcup_C \{C_{< \alpha}^{\max} = C\} \;\; \mbox{(disjoint union)},
\end{equation}
where $C$ runs over the collection of contours surrounding $B_N$ and contained in $B_{MN}$. Given such a contour $C$, we write $U = {\rm Int} C( \supseteq B_N)$ and find by (\ref{1.17}), (\ref{1.18}) that on $\{C^{\max}_{< \alpha} = C\}$
\begin{equation}\label{3.9}
\varphi = \psi^U + h^U,
\end{equation}
where $\psi^U$ is independent of $\sigma(\varphi_y, y \in U^c)$, and a centered Gaussian process with covariance $g_U(\cdot,\cdot)$, and $h^U_x = E_x[\varphi_{X_{T_U}}]$, $x \in \IZ^d$, is $\sigma(\varphi_y,y \in U^c)$-measurable and harmonic on $U$.

\medskip
We denote by $\nu$ the equilibrium measure of $B_N$ and by $\ov{\nu} = \frac{\nu}{{\rm cap}(B_N)}$ the normalized equilibrium measure of $B_N$, see (\ref{1.5}), (\ref{1.6}). We thus find that
\begin{equation}\label{3.10}
\begin{array}{l}
\IP\Big[\dis\sum\limits_x \ov{\nu}(x) \,\varphi_x \le \alpha\Big] \ge \IP\Big[\dis\sum\limits_x \ov{\nu} (x) \,\varphi_x \le \alpha, A_N\Big] \stackrel{(\ref{3.8})}{=}
\\[1ex]
\dis\sum\limits_C \IP\Big[\dis\sum\limits_x \ov{\nu}(x) \,\varphi_x \le \alpha, C^{\max}_{< \alpha} = C\Big] \stackrel{(\ref{3.9})}{=}
\\[1ex]
\dis\sum\limits_C \IP\Big[\dis\sum\limits_x \ov{\nu}(x) \,(\psi_x^U + h^U_x) \le \alpha, C^{\max}_{< \alpha} = C\Big] \ge
\\[1ex]
\dis\sum\limits_C \IP\Big[\dis\sum\limits_x \ov{\nu}(x) \,\psi_x^U \le 0, \dis\sum\limits_x \ov{\nu}(x) \,h^U_x \le \alpha, C^{\max}_{< \alpha} = C\Big]  \underset{\rm below\;(\ref{3.9})}{\stackrel{(\ref{3.5})}{=}}
\\[1ex]
\dis\sum\limits_C \IP\Big[\dis\sum\limits_x \ov{\nu}(x) \,\psi_x^U \le 0\Big]\;\IP \Big[\dis\sum\limits_x \ov{\nu}(x) \,h^U_x \le \alpha, C^{\max}_{< \alpha} = C\Big]  .
\end{array}
\end{equation}

\n
Note that $h^U$ is harmonic on $U$ with values $< \alpha$ on $\partial U = C$, so the rightmost probability in the last line of (\ref{3.10}) equals $\IP[C_{< \alpha}^{\max} = C]$. Moreover, $\sum_x \ov{\nu}(x) \psi_x^U$ is a centered Gaussian variable so that $\IP[\sum_x \ov{\nu}(x) \psi^U_x \le 0] \ge \frac{1}{2}$ (actually, one has an equality because the above Gaussian variable is non-degenerate, as can easily been argued). Inserting these observations in the last line of (\ref{3.10}) we find that
\begin{equation}\label{3.11}
\IP\Big[ \dis\sum\limits_x \ov{\nu}(x) \varphi_x \le \alpha\Big] \ge \fr \; \dis\sum\limits_C \IP[C^{\max}_{< \alpha} = C] \stackrel{(\ref{3.8})}{=} \fr \;\IP[A_N].
\end{equation}
Further, $\sum_x \ov{\nu}(x) \varphi_x$ is a centered Gaussian variable with variance
\begin{equation}\label{3.12}
{\rm var}\Big(\dis\sum_x \ov{\nu}(x) \varphi_x\Big) \stackrel{(\ref{1.13})}{=} \dis\sum\limits_{x,y} \ov{\nu}(x) \,\ov{\nu}(y) \,g(x,y) \stackrel{(\ref{1.11})}{=} \mbox{\f $\dis\frac{1}{{\rm cap}(B_N)}$}.
\end{equation}
Hence, using a standard bound on the tail of a Gaussian variable, we find 
\begin{equation}\label{3.13}
\IP[A_N] \stackrel{(\ref{3.11})}{\le} 2 \IP \Big[\dis\sum\limits_x \ov{\nu}(x) \varphi_x \le \alpha\Big] \le 2 \exp \Big\{ - \fr \; {\rm cap} (B_N)\alpha^2\Big\}.
\end{equation}
This proves (\ref{3.6}). Further, one know that
\begin{equation}\label{3.14}
\lim\limits_N \; \mbox{\f $\dis\frac{1}{N^{d-2}}$} \;{\rm cap}(B_N) = \mbox{\f $\dis\frac{1}{d}$} \; {\rm cap}_{\IR^d} ([-1,1]^d)
\end{equation}

\medskip\n
(see E1 on p.~301 of \cite{Spit01}, as well as (2.4) and Lemma 2.1 of \cite{BoltDeus93}). The claim (\ref{3.7}) now follows and Theorem \ref{theo3.2} is proved.
\end{proof}

\begin{remark}\label{rem3.3} ~ \rm

\medskip\n
1) The above simple proof yields a meaningful upper bound only when $\alpha < 0$. It is not clear how the argument can be modified to produce an interesting bound for non-negative values of $\alpha$ below $h_*$ (when $h_* > 0$)..

\bigskip\n
2) The asymptotic upper bound (\ref{3.7}) does not match the asymptotic lower bound of Theorem \ref{theo2.1} when $h_{**} > 0$ (one knows that $0 < h_* \le h_{**}$ for large $d$ and expects this fact to be true for all $d \ge 3$). We will now aim at correcting this defect. \hfill $\square$
\end{remark}

\section{Some Gaussian estimates}
\setcounter{equation}{0}

In this section we develop some bounds on the expectation of the infimum of certain families of Gaussian variables and their variance. Our main result is Theorem \ref{theo4.2}. Its Corollary \ref{cor4.4} will play an important role in the next section. It controls the probability that simultaneously in several boxes the respective harmonic averages of the Gaussian free field attached to these boxes takes values below some fixed negative level. 

\medskip
We first introduce some notation. We consider positive integers
\begin{equation}\label{4.1}
L \ge 1, \;\mbox{and}\; K \ge 100.
\end{equation}

\n
Informally, we are interested in the regime where $L$ tends to infinity and $K$ is large but fixed. Actually, in the next section we will choose $L$ of order $(N \log N)^{\frac{1}{d-1}}$, see (\ref{5.16}), with $N$ having the same interpretation as in (\ref{2.1}), and we will successively let $N$ and $K$ tend to infinity.

\medskip
We introduce the lattice
\begin{equation}\label{4.2}
\IL = L \IZ^d,
\end{equation}
and the boxes in $\IZ^d$
\begin{equation}\label{4.3}
\begin{split}
B_0 = &\; [0,L)^d \subseteq D_0 = [-3L, 4L)^d \subseteq U_0 =  [-KL + 1, L+ K  L-1)^d 
\\
 \subseteq &\; \wt{B}_0 = [-KL, L + KL)^d,
\end{split}
\end{equation}

\medskip\n
as well as their translates to the various sites of $\IL$:
\begin{equation}\label{4.4}
B_z = z + B_0 \subseteq D_z = z + D_0 \subseteq U_z = z + U_0 \subseteq \wt{B}_z = z + \wt{B}_0 .
\end{equation}
We will often refer to the boxes $B_z$, $z \in \IL$, as $L$-boxes. We will use the collection of boxes $U_z, z \in \IL$, to decompose the Gaussian free field. Specifically, as in (\ref{1.17}), with $U = U_z$, $z \in \IL$, we write
\begin{equation}\label{4.5}
\varphi = h^z + \psi^z
\end{equation}

\n
for the corresponding decomposition. Often, for convenience, when $B = B_z$, we will write $h_B$ and $\psi_B$ in place of $h^z$ and $\psi^z$, and refer to $h_B$ as the harmonic average (of $\varphi)$ attached to $B$, and to $\psi_B$ as the local field attached to $B$. Note that for $z \in \IL$,
\begin{align}
&\mbox{$\psi^z$ is independent of $\sigma(\varphi_y, y \in U^c_z)$ \quad (by (\ref{1.18}))}\label{4.6}
\\[1ex]
& \mbox{$\psi^z$ is $\sigma(\varphi_y, y \in \wt{B}_z)$-measurable \qquad \; (by (\ref{1.16}), (\ref{1.15}))}. \label{4.7}
\end{align}

\n
The next lemma collects some independence properties of the above Gaussian random fields and will also be helpful in the next section.
\begin{lemma}\label{lem4.1}
For $z,z'$ in $\IL$ one has 
\begin{equation}\label{4.8}
\wt{B}_z \cap U_{z'} = \phi = U_z \cap \wt{B}_{z'}, \;\mbox{when}\;  |z - z'|_\infty \ge L + 2KL.
\end{equation}
If $\cC \subseteq \IL$ is a collection of sites with mutual $|\cdot |_\infty$-distance at least $L + 2KL$, then
\begin{equation}\label{4.9}
\begin{array}{l}
\mbox{the centered Gaussian fields $\psi^z$, $z \in \cC$, are independent,}
\\
\mbox{and also independent from the collection $(h^z_x)_{x \in \wt{B}^z, z \in \cC}$}.
\end{array}
\end{equation}
\end{lemma}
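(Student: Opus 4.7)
My plan is to handle (\ref{4.8}) first and then deduce (\ref{4.9}). For (\ref{4.8}), I would argue by direct inspection: the box $\wt{B}_0$ is a product of half-open intervals of length $L + 2KL$, while $U_0$ is a product of intervals of length $L + 2KL - 2$. Two such translates $\wt{B}_z$ and $U_{z'}$ meet iff the corresponding intervals meet in every coordinate, forcing $|z_i - z'_i| \le L + 2KL - 2$ for each $i$. The hypothesis $|z - z'|_\infty \ge L + 2KL$ thus rules out intersection, and symmetry in $z,z'$ gives the second equality.

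For (\ref{4.9}), the idea is to package the individual $U_z, z \in \cC,$ into the single set $V = \bigcup_{z \in \cC} U_z$ and apply the Markov decomposition (\ref{1.17})-(\ref{1.18}) with $V$ playing the role of $U$. This yields $\varphi = \psi^V + h^V$, where $\psi^V$ is a centered Gaussian field of covariance $g_V$, independent of $\sigma(\varphi_y, y \in V^c)$. The plan is then to show that (a) each $\psi^z, z \in \cC,$ is a measurable function of $\psi^V$, and (b) the whole collection $(h^z_x)_{x \in \wt{B}_z, z \in \cC}$ is $\sigma(\varphi_y, y \in V^c)$-measurable; the joint independence asserted in (\ref{4.9}) will then be immediate, while the independence of the $\psi^z$'s among themselves will follow from the block structure of $g_V$.

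Both (a) and (b) rest on (\ref{4.8}). For any $x \in U_z$, a nearest-neighbor step that leaves $U_z$ lands in $\partial U_z \subseteq \wt{B}_z$; since (\ref{4.8}) makes $\wt{B}_z$ disjoint from every $U_{z'}$ with $z' \in \cC \setminus \{z\}$, such a step lies in $V^c$. Consequently $T_V = T_{U_z}$ on $\{X_0 \in U_z\}$, so $h^V = h^z$ and $\psi^V = \psi^z$ throughout $U_z$; since $\psi^z$ vanishes off $U_z$ by (\ref{1.16}), this proves (a). The same confinement forces $g_V(x,y) = 0$ when $x \in U_z, y \in U_{z'}$ with $z \neq z'$, so the blocks $\psi^V|_{U_z}$ are mutually uncorrelated and hence (being jointly Gaussian) independent, giving the independence of the $\psi^z$'s. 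For (b), formula (\ref{1.15}) expresses $h^z_x$ for $x \in U_z$ as a linear combination of $\varphi_y$ with $y \in \partial U_z \subseteq \wt{B}_z \setminus U_z$, and (\ref{4.8}) places these sites in $V^c$; for $x \in \wt{B}_z \setminus U_z$, one simply has $h^z_x = \varphi_x$ with $x \in V^c$ for the same reason. The only delicate point is really bookkeeping: everything hinges on the strengthened separation $|z - z'|_\infty \ge L + 2KL$, which produces $\wt{B}_z \cap U_{z'} = \emptyset$ rather than merely $U_z \cap U_{z'} = \emptyset$, and it is precisely this buffer that lets the $\varphi_y$'s appearing in $h^z_x$ for $x \in \wt{B}_z$ live outside the full union $V$.
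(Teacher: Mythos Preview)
Your argument is correct. For (\ref{4.8}) you and the paper say essentially the same thing; the paper is slightly terser, observing that the hypothesis already forces $\wt{B}_z \cap \wt{B}_{z'} = \emptyset$, from which (\ref{4.8}) is immediate since $U_{z'} \subseteq \wt{B}_{z'}$.

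For (\ref{4.9}) you take a genuinely different route. The paper argues purely through orthogonality: by (\ref{4.6}) the field $\psi^z$ is independent of $\sigma(\varphi_y,\,y \in U_z^c)$, while by (\ref{4.7}) and (\ref{1.15}) each $\psi^{z'}$ and each $h^{z'}_{x'}$ (for $x' \in \wt{B}_{z'}$, $z' \neq z$) is $\sigma(\varphi_y,\,y \in \wt{B}_{z'})$-measurable with $\wt{B}_{z'} \subseteq U_z^c$ by (\ref{4.8}); together with the orthogonality of $\psi^z$ and $h^z$ coming from (\ref{1.18}), this gives pairwise zero covariance between all the blocks, and joint Gaussianity then upgrades pairwise orthogonality to full independence. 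Your approach instead builds a single Markov decomposition with respect to $V = \bigcup_{z \in \cC} U_z$, identifies each $\psi^z$ as the restriction of $\psi^V$ to $U_z$, and checks that every $h^z_x$ with $x \in \wt{B}_z$ lands in the complementary $\sigma$-field $\sigma(\varphi_y,\,y \in V^c)$. The paper's argument is shorter and leans on the special fact that uncorrelated jointly Gaussian variables are independent; your argument is more structural and would transfer to Markov random fields that are not Gaussian. One small bookkeeping point: the decomposition (\ref{1.17})--(\ref{1.18}) is stated in the paper only for $U \subset\subset \IZ^d$, so if $\cC$ is allowed to be infinite you should note that independence is a finite-dimensional statement and your construction applies verbatim to any finite subcollection.
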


\begin{proof}
We begin with (\ref{4.8}). The condition on $z,z'$ ensures that $\wt{B}_z \cap \wt{B}_{z'} = \phi$ and (\ref{4.8}) follows, see (\ref{4.3}), (\ref{4.4}). As for the claim (\ref{4.9}), first note that by (\ref{4.8}) and (\ref{4.6}), (\ref{4.7}), the centered Gaussian fields $\psi^z$, $z \in \cC$, are pairwise orthogonal. In addition, each $\psi^z$ is orthogonal to $h^z$ (by (\ref{1.18})), and for $z' \not= z$ in $\cC$, $x' \in \wt{B}_{z'}$, $\psi^z$ is orthogonal to $h^{z'}_{x'}$ (which is $\sigma(\varphi_y, y \in \wt{B}_{z'})$-measurable by (\ref{1.15}), where $\wt{B}_{z'} \subseteq U^c_z$ by (\ref{4.8})). Since all the random fields are centered and jointly Gaussian, the claim (\ref{4.9}) follows.
\end{proof}

\medskip
We now consider (see (\ref{4.2}) for notation)
\begin{equation}\label{4.10}
\begin{array}{l}
\mbox{$\cC$ a non-empty finite subset of $\IL$ with points}
\\
\mbox{at mutual $|\cdot|_\infty$-distance at least $L + 2 KL$}.
\end{array}
\end{equation}
Given $\cC$ as above we write
\begin{equation}\label{4.11}
C = \bigcup\limits_{z \in \cC} B_z.
\end{equation}
As a shorthand, we also write $B \in \cC$ to mean $B = B_z$ with $z \in \cC$. we denote by $\nu$ the equilibrium measure of $C$ and by $\ov{\nu} = \frac{\nu}{{\rm cap}(C)}$ the normalized equilibrium measure of $C$.

\medskip
We attach to $\cC$ the collection $\cF$ of functions $f$ from $\cC$ into $\IZ^d$ such that the image of any $z$ belongs to $D_z$:
\begin{equation}\label{4.12}
\cF = \{f \in (\IZ^d)^\cC; \; f(z) \in D_z \;\mbox{for each $z \in \cC\}$}.
\end{equation}

\n
As mentioned above, we often view $\cC$ as a collection of $L$-boxes and also write $f(B)$ in place of $f(z)$, when $f \in \cF$ and $B = B_z$. Further, we attach to $\cC$ the probability on $\cC$ with weight 
\begin{equation}\label{4.13}
\lambda(z) = \ov{\nu}(B_z) \; \mbox{for each} \; z \in \cC.
\end{equation}

\n
(note that the boxes $B_z$, $z \in \cC$, are pairwise disjoint and $\sum_{z \in \cC} \lambda(z) = 1$). We will routinely write $\lambda(B)$ in place of $\lambda(z)$ when $B = B_z$.

\medskip
The centered Gaussian field (indexed by $\cF$) that we now introduce, plays an important role in this section. Specifically, we set (see below (\ref{4.5}) for notation)
\begin{equation}\label{4.14}
Z_f = \dis\sum\limits_{B \in \cC} \lambda(B) \,h_B\big(f(B)\big),
\end{equation}
as well as
\begin{equation}\label{4.15}
Z = \inf\limits_{f \in \cF} Z_f .
\end{equation}

\medskip\n
We will use $(Z_f)_{f \in \cF}$ and $Z$ as tools in order to bound the probability that $\inf_{D_z} h^z \le - a$, for each $z$ in $\cC$ (with $a$ some positive number). These bounds will rely on uniform controls on the variance of $Z_f$ and on the expectation of $Z$. These controls are encapsulated in the next theorem.

\begin{theorem}\label{theo4.2} (recall that $L \ge 1$ and $K \ge 100$, cf.~(\ref{4.1}))
\begin{equation}\label{4.16}
\limsup\limits_L\;  \sup\limits_\cC \;\sup\limits_{f \in \cF} \{{\rm var}(Z_f)\,{\rm cap}(C)) < \alpha(K), \;\mbox{where $\alpha (K) > 1$ and $\lim\limits_K \alpha (K)= 1$}
\end{equation}
(the supremum over $\cC$ runs over all collections as in (\ref{4.10}), and the notation is the same as in (\ref{4.11}), (\ref{4.12})). Moreover, one has
\begin{equation}\label{4.17}
\sup\limits_\cC \,|\IE [Z]| \;\Big(\mbox{\f $\dis\frac{|\cC|}{{\rm cap}(C)}$}\Big)^{-\frac{1}{2}} \le \mbox{\f $\dis\frac{c_4}{K}$}\,.
\end{equation}
\end{theorem}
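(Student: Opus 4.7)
\textbf{Variance bound (\ref{4.16}).} The plan is to expand ${\rm var}(Z_f)$ as a double sum and exploit the Gauss--Markov orthogonality from (\ref{1.17})--(\ref{1.18}) together with Lemma \ref{lem4.1}. For $x \in D_B, x' \in D_{B'}$ with $B \neq B'$ (so $|z-z'|_\infty \ge L + 2KL$), (\ref{4.8})--(\ref{4.9}) together with $\psi^B \perp h^B$ yield ${\rm cov}(h_B(x), h_{B'}(x')) = g(x,x')$; whereas for $B = B'$ the decomposition gives ${\rm cov}(h_B(x), h_B(x)) = g(0) - g_{U_B}(x,x)$. Consequently
\begin{equation*}
{\rm var}(Z_f) = \sum_{B, B'} \lambda(B)\lambda(B') g(f(B), f(B')) - \sum_B \lambda(B)^2 g_{U_B}(f(B), f(B)).
\end{equation*}
For the off-diagonal contribution, $f(B), f(B')$ lie within $O(L)$ of $z, z'$ while $|z - z'|_\infty \geq (1+2K)L$, so the asymptotic (\ref{1.2}) gives $g(f(B), f(B'))/g(x, x') = 1 + O_d(1/K)$ uniformly for $x \in B_z, x' \in B_{z'}$ as $L \to \infty$. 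Averaging against $\pi_B := \bar\nu|_B/\lambda(B)$ and summing, combined with $\sum_{B,B'} \lambda(B)\lambda(B') g_{B,B'} = E(\bar\nu) = 1/{\rm cap}(C)$ from (\ref{1.11}), bounds the off-diagonal part by $(1 + O_d(1/K))/{\rm cap}(C)$. The diagonal remainder equals $\sum_B \lambda(B)^2 E_{f(B)}[g(X_{T_{U_B}} - f(B))] \le C(KL)^{2-d} \sum_B \lambda(B)^2$ (using that $f(B) \in D_B$ sits at distance $\ge (K-4)L$ from $\partial U_B$); with $\sum_B \lambda(B)^2 \le \max_B \bar\nu(B_z) \le {\rm cap}(B)/{\rm cap}(C) \le cL^{d-2}/{\rm cap}(C)$ from subadditivity of capacity and (\ref{1.7}), this contributes $\leq CK^{2-d}/{\rm cap}(C)$. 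Altogether ${\rm var}(Z_f)\,{\rm cap}(C) \leq 1 + O(1/K)$, proving (\ref{4.16}) with, say, $\alpha(K) = 1 + C/K$.

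\textbf{Expectation bound (\ref{4.17}).} The decisive observation is a decoupling: since $\cF = \prod_{B \in \cC} D_B$ is a Cartesian product and $Z_f$ is a sum that separates over $B$, the infimum factorises as
\begin{equation*}
Z = \sum_{B \in \cC} \lambda(B) M_B, \qquad M_B := \inf_{x \in D_B} h_B(x).
\end{equation*}
Each $h_B(x)$ is centered, so $\IE[M_B] \leq 0$, and $|\IE[Z]| = \sum_B \lambda(B)|\IE[M_B]| \leq \max_B \IE[\sup_{x \in D_B}(-h_B(x))]$. I would control the latter by Dudley's entropy bound for the centered Gaussian process $(h_B(x))_{x \in D_B}$. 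The crux is to show that its intrinsic metric satisfies
\begin{equation*}
d(x,x')^2 := \IE[(h_B(x) - h_B(x'))^2] \leq C|x-x'|^2/(KL)^d, \qquad x, x' \in D_B.
\end{equation*}
To establish this, write $h_B(x) = \sum_y \mu_x(y)\varphi_y$ with $\mu_x(y) = P_x[X_{T_{U_B}} = y]$ positive and harmonic in $x \in U_B$; the discrete Harnack-gradient estimate then gives $|\mu_x(y) - \mu_{x'}(y)| \leq C(|x-x'|/R)\mu_x(y)$ with $R = (K-4)L$ the distance from $D_B$ to $\partial U_B$. Since $g \ge 0$, this yields $d(x,x')^2 \leq (C|x-x'|/R)^2 E(\mu_x) = (C|x-x'|/R)^2 {\rm var}(h_B(x)) \leq C'|x-x'|^2/(KL)^d$, where ${\rm var}(h_B(x)) = E_x[g(X_{T_{U_B}} - x)] \leq c(KL)^{2-d}$ was used. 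Dudley's integral against this metric on $D_B$ (of Euclidean diameter $\leq 7\sqrt d\,L$) then evaluates to $|\IE[M_B]| \leq C K^{-d/2} L^{1-d/2}$, uniformly in $B$. Combined with ${\rm cap}(C) \leq c|\cC|L^{d-2}$ (subadditivity and (\ref{1.7})), we conclude $|\IE[Z]|(|\cC|/{\rm cap}(C))^{-1/2} \leq cK^{-d/2} \leq c_4/K$, since $d \ge 3$ forces $K^{-d/2} \leq K^{-3/2} \leq K^{-1}$.

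\textbf{Main obstacle.} The non-routine step is the intrinsic-metric estimate $d(x,x') \leq C|x-x'|/(KL)^{d/2}$. The na\"ive route (pointwise variance ${\rm var}(h_B(x)) \leq c(KL)^{2-d}$ combined with a Gaussian maximum bound $\IE[\sup] \leq C\sigma\sqrt{\log|D_B|}$) introduces a spurious $\sqrt{\log L}$ factor and fails to produce an $L$-uniform estimate. The sharpening rests crucially on the geometric fact that $D_B$ sits deep inside $U_B$, at $\ell^\infty$-distance $\sim KL$ much larger than ${\rm diam}(D_B) \sim L$, so that $h_B$ is ``smooth on scale $KL$'' via the discrete Harnack inequality applied to the Poisson kernel $x \mapsto P_x[X_{T_{U_B}} = y]$; this is what ultimately converts the nesting factor $K$ into the desired $1/K$ decay.
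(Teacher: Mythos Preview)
Your proof of (\ref{4.16}) follows essentially the same route as the paper: expand ${\rm var}(Z_f)$ into diagonal and off--diagonal blocks, identify ${\rm cov}(h_B(x),h_{B'}(x'))=g(x,x')$ for $B\ne B'$, compare the off--diagonal part to $E(\bar\nu)=1/{\rm cap}(C)$ via the ratio $g(f(B),f(B'))/g(x,x')=1+O(1/K)$ (the paper names this ratio $\gamma(K,L)$), and bound the diagonal contribution by $cK^{2-d}/{\rm cap}(C)$ using ${\rm var}(h_B(x))\le c(KL)^{2-d}$ and $\lambda(B)\le{\rm cap}(B_0)/{\rm cap}(C)$.

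For (\ref{4.17}) you take a genuinely different path. The paper keeps the process $(Z_f)_{f\in\cF}$ intact, proves an increment bound $\IE[(Z_f-Z_k)^2]\,{\rm cap}(C)\le c_5^2\|f-k\|_\infty^2/(KL)^2$ (its Lemma \ref{lem4.3}, via the same Harnack gradient estimate you invoke), and then runs Dudley's integral over the $d|\cC|$--dimensional index set $\cF$; the $\sqrt{|\cC|}$ comes out of the entropy and one arrives at $|\IE[Z]|\sqrt{{\rm cap}(C)}\le c\sqrt{|\cC|}/K$. You instead exploit the product structure $\cF=\prod_B D_B$ to decouple $Z=\sum_B\lambda(B)\inf_{D_B}h_B$ exactly, apply Dudley box by box with the sharper single--box increment estimate $\IE[(h_B(x)-h_B(x'))^2]\le C|x-x'|^2/(KL)^d$ (using ${\rm var}(h_B(x))\le c(KL)^{2-d}$ rather than $1/{\rm cap}(C)$), and obtain $|\IE[\inf_{D_B}h_B]|\le cK^{-d/2}L^{1-d/2}$. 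Combined with ${\rm cap}(C)\le c|\cC|L^{d-2}$ this yields $|\IE[Z]|\sqrt{{\rm cap}(C)/|\cC|}\le cK^{-d/2}$, actually stronger than the paper's $c/K$ since $d\ge3$. Your route is more elementary in that it avoids the high--dimensional entropy computation, at the price of using subadditivity of capacity (which the paper does not need for this step); both hinge on the same Harnack control of the Poisson kernel that you correctly flag as the crux.
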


\begin{proof}
We first prove (\ref{4.16}). We consider a collection $\cC$ as in (\ref{4.10}). By (\ref{4.8}), when $z \not= z'$ belong to $\cC$, $U_z \cap \wt{B}_{z'} = \phi = U_{z'} \cap \wt{B}_z$. Writing $B$ and $B'$ in place of $B_z$ and $B_{z'}$, $D$ and $D'$ in place of $D_z$ and $D_{z'}$, $U$ and $U'$ in place of $U_z$ and $U_{z'}$ (and so on), we see that when $x \in D$, $x' \in D'$, with $z \not= z'$ in $\cC$, then
\begin{equation}\label{4.18}
\begin{split}
\IE[h_B(x) \,h_B(x')]  \stackrel{(\ref{1.15})}{=} &\; \dis\sum\limits_{y,y'} P_x[X_{T_U} = y] \,P_{x'} [X_{T_{U'}} = y'] \,g(y,y')
\\
\stackrel{(\ref{1.4})}{=} \, &\; \dis\sum\limits_{y'} P_{x'} [X_{T_{U'}} = y'] \,g(x,y') \quad \mbox{(since $\partial U' \subseteq U^c$)}
 \\
 \stackrel{(\ref{1.4})}{=}\, &\; g(x,x') \quad \mbox{(since $x \notin U'$)}.
\end{split}
\end{equation}

\medskip\n
On the other hand, when $z = z' \in \cC$, we find that for $x,x' \in D$
\begin{equation}\label{4.19}
\IE[h_B(x) \,h_B(x')] = \dis\sum\limits_{y'} P_{x'} [X_{T_U} = y'] \,g(x,y') = \dis\sum\limits_y \,P_x [X_{T_U} = y] \,g(y,x')
\end{equation}

\n
(for the first equality one uses (\ref{1.15}), (\ref{1.4}), and that $y'$ belongs to $\partial U \subseteq U^c$, and for the second equality  (\ref{1.15}), (\ref{1.4}), and that $y$ belongs to $\in \partial U \subseteq U^c$).

\medskip
Coming back to the definition of $Z_f$ in (\ref{4.14}) we see that
\begin{equation}\label{4.20}
\begin{split}
{\rm var}(Z_f) & =  \dis\sum\limits_{B,B' \in \cC} \lambda (B) \,\lambda(B') \, \IE\big[h_B\big(f(B)\big) \,h_{B'}\big(f(B')\big)\big]
\\[1ex]
&\!\!\! \stackrel{(\ref{4.18})}{=}  \dis\sum\limits_{B \in \cC} \lambda(B)^2 \IE \big[h^2_B \big(f(B)\big)\big] + \dis\sum\limits_{B \not= B'} \lambda(B) \,\lambda(B') \,g\big(f(B),f(B')\big).
\end{split}
\end{equation}
We then introduce
\begin{equation}\label{4.21}
\gamma(K,L) = \wt{\sup} \,g(y,y') / g(x,x') \; (\ge 1)
\end{equation}

\n
where $\wt{\sup}$ denotes the supremum over $y \in D_z$, $y' \in D_{z'}$, $x \in B_{z'}$ and $x' \in B_{z'}$, with $z,z' \in \IL$ such that $|z - z'|_\infty \ge L + 2KL$. It readily follows from (\ref{1.2}) that
\begin{equation}\label{4.22}
\lim\limits_K \; \limsup\limits_L \; \gamma(K,L) = 1.
\end{equation}

\n
We will now bound the expression in the last line of (\ref{4.20}). To handle the first sum, we use (\ref{4.19}), (\ref{1.2}) and the fact that $d(\partial U, D) \ge (K-3) L$. We thus find that for any $B \in \cC$ and $y \in D$ one has
\begin{equation}\label{4.23}
\IE[h_B (y)^2] \le \dis\frac{c}{(KL)^{d-2}} \,.
\end{equation}

\medskip\n
Further, we note that for any $B \in \cC$,
\begin{equation}\label{4.24}
\lambda(B) \underset{(\ref{1.5})}{\stackrel{(\ref{4.13})}{=}} \;\mbox{\f $\dis\frac{1}{{\rm cap}(C)}$} \;\dis\sum\limits_{x \in B} P_x[\wt{H}_C = \infty] \le \mbox{\f $\dis\frac{1}{{\rm cap}(C)}$}  \;\dis\sum\limits_{x \in B} P_x[\wt{H}_B = \infty] \stackrel{(\ref{1.5})}{=} \mbox{\f $\dis\frac{{\rm cap}(B)}{{\rm cap}(C)}$}.
\end{equation}

\medskip\n
Hence, coming back to (\ref{4.20}), keeping in mind that $\sum_B \lambda(B) = 1$, we see that for all $\cC$ as in (\ref{4.10}) and $f$ in $\cF$
\begin{equation}\label{4.25}
\begin{split}
{\rm var}(Z_f) & \le \mbox{\f $\dis\frac{c}{(KL)^{d-2}}$} \; \mbox{\f $\dis\frac{{\rm cap}(B_0)}{{\rm cap}(C)}$} + \gamma(K,L) \;\dis\sum\limits_{B\not= B'} \; \dis\sum\limits_{x \in B, x'\in B'} \ov{\nu}(x) \,\ov{\nu}(x') \,g(x,x')
\\
&\!\!\! \stackrel{(\ref{1.7})}{\le}  \mbox{\f $\dis\frac{c}{K^{d-2}}$} \; \mbox{\f $\dis\frac{1}{{\rm cap}(C)}$} + \mbox{\f $\dis\frac{\gamma(K,L)}{{\rm cap}(C)^2}$} \; \dis\sum\limits_{x,x' \not= C} \nu(x) \,\nu(x')\,g(x,x')
\\
& \!= \mbox{\f $\dis\frac{1}{{\rm cap}(C)}$} \;\Big(\mbox{\f $\dis\frac{c}{K^{d-2}}$} + \gamma(K,L)\Big) \; \mbox{(since $\nu$ is the equilibrium measure of $C$)}.
\end{split}
\end{equation}

\medskip\n
The claim in (\ref{4.16}) now follow by (\ref{4.22}).

\medskip
We then turn to the proof of (\ref{4.17}). We will use bounds in the expectation of the infimum of the centered Gaussian process $Z_f, f \in \cF$, based on the metric entropy, see \cite{AdleTayl07}, p.~14. For this purpose, we need a control on the regularity of the map $f \in \cF \rightarrow Z_f \in L^2(\IP)$. This is precisely the object of the next lemma.
\begin{lemma}\label{lem4.3} $(L \ge 1, K \ge 100)$

\medskip
For all $\cC$ as in (\ref{4.10}), with $C$ as in (\ref{4.11}), and $\cF$ as in (\ref{4.12}), one has
\begin{equation}\label{4.26}
\IE[(Z_f - Z_k)^2] \,{\rm cap}(C) \le c^2_5 \; \mbox{\f $\dis\frac{\| f-k\|^2_\infty}{(KL)^2}$} \,, \; \mbox{for all $f,k \in \cF$},
\end{equation}

\medskip\n
where $\|f-k\|_\infty$ stands for $\sup_{B \in \cC} |f(B) - k(B)|_\infty$.
\end{lemma}

\begin{proof}
For $\cC$ as above, and $f,k \in \cF$, one has (with hopefully obvious notation)
\begin{equation}\label{4.27}
\begin{array}{l}
\IE[(Z_f - Z_k)^2] = \IE\Big[\Big(\dis\sum\limits_B \lambda(B)(h_B\big(f(B)\big) - h_B\big(k(B)\big)\Big)^2\Big] =
\\[2ex]
\dis\sum\limits_{B,B'} \lambda(B) \,\lambda(B') \,\IE\big[\big(h_B\big(f(B)\big) - h_B\big(k(B)\big) \big(h_{B'}\big(f(B')\big) - h_{B'}\big(k(B')\big)\big)\big] \stackrel{(\ref{1.15})}{=}
\\
\\[-1ex]
\dis\sum\limits_{B,B'} \lambda(B) \,\lambda(B') \dis\sum\limits_{z,z'} (P_{f(B)}[X_{T_U} = z] - P_{k(B)}[X_{T_U} = z])(P_{f(B')}[X_{T_{U'}} = z'] 
\\
\\[-1ex]
- P_{k(B')} [X_{T_{U'}} = z']) \,\IE[\varphi_z \varphi_{z'}].
\end{array}
\end{equation}

\medskip\n
Next, we observe that $P_\point[X_{T_U} = z]$, for $z \in \partial U$, is a non-negative harmonic function in $U$. Combining the Harnack Inequality and the gradient estimates in Theorems 1.7.2 and 1.7.1, p.~42 of \cite{Lawl91}, we see that
\begin{equation}\label{4.28}
\begin{array}{l}
|P_x[X_{T_U} = z] - P_y[X_{T_U} = z] \,| \le 
\\[2ex]
c \;\mbox{\f $\dis\frac{|x-y|_\infty}{KL}$} P_x[X_{T_U} = z], \;\mbox{for any $x,y \in D$ and $z \in \partial U$}.
\end{array}
\end{equation}

\medskip\n
Applying the same bound to $U'$, and keeping in mind that $\IE[\varphi_z \varphi_{z'}] = g(z,z') \ge 0$ in the last line of (\ref{4.27}), we thus find that
\begin{equation}\label{4.29}
\begin{split}
\IE[(Z_f - Z_k)^2] & \le \;c \dis\sum\limits_{B,B'} \lambda(B) \,\lambda(B') \,\mbox{\f $\dis\frac{|f(B) - k(B)|_\infty |f(B')-k(B')|_\infty}{(KL)^2}$} 
\\[1ex]
& \hspace{1cm} \IE\big[h_B\big(f(B)\big)\,h_{B'}\big(f(B')\big)\big]
\\[1ex]
& \le \;c \;\mbox{\f $\dis\frac{\|f-k\|^2_\infty}{(KL)^2}$} \;\IE[Z_f^2] \le\; c \;\mbox{\f $\dis\frac{\|f-k\|^2_\infty}{(KL)^2}$} \; \mbox{\f $\dis\frac{1}{{\rm cap}(C)}$}\,,
\end{split}
\end{equation}

\medskip\n
where in the last inequality we used (\ref{4.25}) and the  fact that $\gamma(K,L)$ in (\ref{4.21}) is smaller than $\sup g(y)/g(x)$, with a supremum over $|x|_\infty \ge 2K L$ and $|y - x|_\infty \le 14 L$, so that $\frac{1}{2} \,|x|_\infty \le |y|_\infty \le 2|x|_\infty$, whence $\gamma(K,L) \le c$. This completes the proof of (\ref{4.26}).
\end{proof}

\medskip
We now resume the proof of (\ref{4.17}). We pick $\cC$ as in (\ref{4.10}) and, for convenience, introduce the scaled centered Gaussian process
\begin{equation}\label{4.30}
\wt{Z}_f = \sqrt{{\rm cap}(C)} \;Z_f, \;\mbox{for $f \in \cF$}.
\end{equation}

\n
By (\ref{4.26}) of the above lemma, the so-called canonical metric on $\cF$ induced by the $L^2(\IP)$-distance on $\wt{Z}_f$, $f \in \cF$, satisfies
\begin{equation}\label{4.31}
\IE[(\wt{Z}_f - \wt{Z}_k)^2]^{\frac{1}{2}} \le \dis\frac{c_5}{KL} \;\|f-k\|_\infty \le \dis\frac{7c_5}{K} , \; \mbox{for $f,k \in \cF$}
\end{equation}

\medskip\n
(we used the definition of $\cF$ in (\ref{4.12}) for the last inequality).

\medskip
Thus, given $\ve \in (0, \frac{7c_5}{K}]$, we can cover $\cF$ by balls of radius $\ve$ in the canonical metric as follows. We pick $\ell$ as the largest integer such that $c_5 \frac{\ell}{KL} \le \ve$, i.e. $\ell = [\frac{KL}{c_5}\;\ve]$ (and hence $1 + \ell \le 8 L$, by the bound on $\ve$). We then partition $D_0$ (and by translation invariance each $D_z$, $z \in \IL$) into disjoint boxes having each $|\cdot|_\infty$-diameter at most $\ell$ (so the projection on each axis of such a box  contains at most $\ell + 1$ points). Such a partition can be achieved with at most $([\frac{7L}{\ell + 1}] + 1)^d \le (\frac{15L}{\ell + 1})^d$ boxes (recall that $\ell + 1 \le 8 L)$. Moreover, if $f,k \in \cF$ are such that $f(z)$ and $k(z)$ belong to the same box in $D_z$, for each $z \in \cC$, then, by (\ref{4.31}) and the choice of $\ell$, the canonical distance between $f$ and $k$ is at most $\ve$. In this fashion, we can cover $\cF$ by at most $(\frac{15L}{\ell + 1})^{d|\cC|}$ balls of radius $\ve$ for the canonical metric on $\cF$. Since $\ell + 1 \ge \frac{KL}{c_5} \,\ve$, we see that for $0 < \ve \le \frac{7 c_5}{K}$,
\begin{equation}\label{4.32}
\begin{array}{l}
\mbox{$\cF$ is covered by at most $N(\ve) = \Big(\mbox{\f $\dis\frac{15 c_5}{K \ve}$}\Big)^{d|\cC|}$ balls or radius $\ve$}
\\
\mbox{in the canonical metric}.
\end{array}
\end{equation}

\medskip\n
By Theorem 1.3.3, p.~14 of \cite{AdleTayl07}, we find that for all $\cC$ as in (\ref{4.10})
\begin{equation}\label{4.33}
\begin{array}{l}
|\sqrt{{\rm cap}(C)} \;\IE[Z]| = | \IE[\inf\limits_\cF \wt{Z}_f]| \le c \dis\int_0^{\frac{7c_5}{K}} \sqrt{\log N(\ve)} \; d\ve =
\\
\dis\int_0^{\frac{7c_5}{K}} \sqrt{d |\cC| \log \Big(\mbox{\f $\dis\frac{15c_5}{K\ve}$}\Big)} \; d \ve =\mbox{\f $\dis\frac{c'}{K}$} \;\sqrt{|\cC|} \; \dis\int^1_0 \;\sqrt{\log \Big(\mbox{\f $\dis\frac{15}{7\eta}$}\Big)} \;d \eta = \mbox{\f $\dis\frac{c}{K}$} \; \sqrt{|\cC|}.
\end{array}
\end{equation}

\medskip\n
This completes the proof of (\ref{4.17}) and hence of Theorem \ref{theo4.2}.
\end{proof}

We can now combine Theorem \ref{theo4.2} and the Borell-TIS Inequality (see \cite{AdleTayl07}, p.~50) to obtain a bound in the probability that $\inf_D h_B \le -a$, for all boxes $B$ belonging to a finite collection $\cC$ as in (\ref{4.10}). This estimate will play an important role in the next section.
\begin{corollary}\label{cor4.4} ($a > 0, K \ge 100$, and $\alpha (K)$ as in (\ref{4.16}))
\begin{equation}\label{4.34}
\begin{array}{l}
\limsup\limits_L \;\sup\limits_\cC \bigg\{\log \IP[\bigcap\limits_{B \in \cC} \{\inf\limits_D h_B \le - a\}] + \fr \bigg(a - \mbox{\f $\dis\frac{c_4}{K}$} \;\sqrt{\mbox{\f $\dis\frac{|\cC|}{{\rm cap}(C)}$}}\bigg)^2_+ \; \mbox{\f $\dis\frac{{\rm cap}(C)}{\alpha(K)}$}\bigg\} \le 0
\\
\\[-2ex]
\mbox{(recall that $\lim\limits_K \alpha(K) = 1$)}.
\end{array}
\end{equation}
\end{corollary}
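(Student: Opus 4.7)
The plan is to reduce the intersection event on the left-hand side of (\ref{4.34}) to the single one-sided Gaussian tail $\{Z \le -a\}$, where $Z$ is the infimum introduced in (\ref{4.15}), and then to invoke the Borell-TIS concentration inequality, controlling the mean of $Z$ by (\ref{4.17}) and the variances of the family $(Z_f)_{f \in \cF}$ by (\ref{4.16}).

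For the reduction, on the event $\bigcap_{B \in \cC}\{\inf_D h_B \le -a\}$, for each realization $\omega$ and each $B \in \cC$ one may pick some point $f(B) \in D_B$ with $h_B(f(B))(\omega) \le -a$. Since the weights $\lambda(B)$ are non-negative and $\sum_{B \in \cC}\lambda(B) = 1$, the corresponding $f \in \cF$ satisfies
\[
Z_f(\omega) \;=\; \sum_{B \in \cC}\lambda(B)\,h_B(f(B))(\omega) \;\le\; -a,
\]
and hence $Z(\omega) = \inf_{f' \in \cF} Z_{f'}(\omega) \le -a$. Consequently
\[
\IP\Bigl[\bigcap_{B \in \cC}\{\inf_D h_B \le -a\}\Bigr] \;\le\; \IP[Z \le -a],
\]
which reduces the claim to a one-sided tail for $Z$, equivalently an upper tail for the supremum $\sup_{f \in \cF}(-Z_f)$ of the finite centered Gaussian family $(-Z_f)_{f \in \cF}$.

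To this family I would apply the Borell-TIS inequality (p.~50 of \cite{AdleTayl07}). Setting $\sigma_\cC^2 = \sup_{f \in \cF}{\rm var}(Z_f)$ and $m_\cC = \IE[\sup_{f}(-Z_f)] = -\IE[Z]$, for $a > m_\cC$ the inequality gives
\[
\IP[Z \le -a] \;=\; \IP\bigl[\sup_{f}(-Z_f) \ge a\bigr] \;\le\; \exp\bigl(-\tfrac{1}{2}(a - m_\cC)^2/\sigma_\cC^2\bigr),
\]
while if $a \le m_\cC$ the positive part in (\ref{4.34}) vanishes and the claim reduces to the trivial bound $\log \IP \le 0$. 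By (\ref{4.17}), $m_\cC \le |\IE[Z]| \le (c_4/K)\sqrt{|\cC|/{\rm cap}(C)}$ uniformly in $\cC$, so $(a - m_\cC)_+ \ge \bigl(a - (c_4/K)\sqrt{|\cC|/{\rm cap}(C)}\bigr)_+$. By (\ref{4.16}) there exists $L_0(K)$ such that $\sigma_\cC^2 \le \alpha(K)/{\rm cap}(C)$ uniformly in $\cC$ for all $L \ge L_0(K)$.

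Substituting these two bounds into the Borell-TIS estimate yields, for every $L \ge L_0(K)$ and every $\cC$ as in (\ref{4.10}),
\[
\IP\Bigl[\bigcap_{B \in \cC}\{\inf_D h_B \le -a\}\Bigr] \;\le\; \exp\Bigl\{-\tfrac{1}{2}\bigl(a - \tfrac{c_4}{K}\sqrt{|\cC|/{\rm cap}(C)}\bigr)_+^2\,{\rm cap}(C)/\alpha(K)\Bigr\}.
\]
Taking logarithms, then $\sup_\cC$, and finally $\limsup_L$ gives exactly (\ref{4.34}). The only subtlety is that each of the three ingredients — the deterministic selector reduction, the Borell-TIS bound, and the substitution of (\ref{4.16})–(\ref{4.17}) — must be applied uniformly in $\cC$; this uniformity is built into Theorem \ref{theo4.2}, so no new geometric or combinatorial input is required at this stage.
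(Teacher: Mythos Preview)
Your proof is correct and follows essentially the same route as the paper: reduce the intersection event to $\{Z \le -a\}$ via a measurable selector, apply the Borell--TIS inequality with $\sigma^2 = \sup_{f\in\cF}{\rm var}(Z_f)$, and then insert the uniform bounds (\ref{4.17}) and (\ref{4.16}) before taking $\sup_\cC$ and $\limsup_L$. The paper's version is slightly more compact in that it writes the Borell--TIS bound directly as $\exp\{-\frac{1}{2\sigma^2}(a - |\IE[Z]|)_+^2\}$, absorbing your case distinction $a \lessgtr m_\cC$ into the positive part, but the substance is identical.
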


\begin{proof}
We consider $L \ge 1$, $K \ge 100$, and $\cC$ as in (\ref{4.10}). We observe that on the event $A = \cap_{B \in \cC} \{\inf_D h_B \le -a\}$, we can choose $f \in \cF$ such that $h_B(f(B)) \le -a$ for each $B \in \cC$, and hence, $Z_f = \sum_B \lambda(B) h_B(f(B)) \le -a$ (recall that $\lambda(\cdot)$ is a probability on $\cC$, cf.~(\ref{4.13})). We can thus apply the Borell-TIS Inequality, see Theorem 2.1.1, p.~50 of \cite{AdleTayl07}, and find that
\begin{equation}\label{4.35}
\begin{array}{l}
\IP[A] \le \IP[\inf\limits_\cF  Z_f \le -a] \stackrel{(\ref{4.15})}{=} \IP[Z \le -a ] \le \exp\Big\{- \mbox{\f $\dis\frac{1}{2 \sigma^2}$} \;(a - |E[Z]|)^2_+\Big\}
\\
\mbox{(with $\sigma^2 = \sup\limits_\cF$ var$(Z_f)$)}.
\end{array}
\end{equation}

\n
Taking logarithms and inserting the bound on $|\IE[Z]|$ from (\ref{4.17}), we obtain
\begin{equation}\label{4.36}
\log \IP [A] + \fr \;\bigg(a - \mbox{\f $\dis\frac{c_4}{K}$} \;\sqrt{\mbox{\f $\dis\frac{|\cC|}{{\rm cap}(C)}$}}\bigg)^2_+ \; \dis\mbox{\f $\dis\frac{{\rm cap}(C)}{\sigma^2 {\rm cap}(C)}$} \le 0\,.
\end{equation}

\n
We can further replace $\sigma^2 {\rm cap}(C)$ by $\sup_\cC \sigma^2 {\rm cap}(C)$ in the above inequality and note that by (\ref{4.16}) $\limsup_L \sup_\cC \sigma^2 {\rm cap}(C) < \alpha(K)$. So, taking a supremum over $\cC$ and then letting $L$ tend to infinity we obtain (\ref{4.34}).
\end{proof}

\medskip
We also record for later use an estimate on the tail of $\sup_D |h_B|$, with $B$ an arbitrary $L$-box. In essence, it corresponds to the case $|\cC| = 1$ in the above set-up.
\begin{corollary}\label{cor4.5} ($L \ge 1,K \ge 100, a > 0,B$ an arbitrary $L$-box)
\begin{equation}\label{4.37}
\IP[\sup\limits_D |h_B| \ge a] \le 2 \exp\Big\{- c(KL)^{d-2} \Big(a - \mbox{\f $\dis\frac{c}{KL^{\frac{d-2}{2}}}$}\Big)^2_+\Big\} .
\end{equation}
\end{corollary}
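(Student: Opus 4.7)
\textbf{Proof proposal for Corollary \ref{cor4.5}.} The plan is to specialize the machinery of Theorem \ref{theo4.2} to the degenerate case $|\cC|=1$ and then combine the resulting variance and mean bounds with the Borell-TIS Inequality, once for $h_B$ and once for $-h_B$.

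Concretely, let $B = B_z$ and take $\cC = \{z\}$, which trivially satisfies (\ref{4.10}). Then $\lambda(B)=1$, $C = B$, the index set $\cF$ from (\ref{4.12}) is naturally identified with $D = D_z$, and the centered Gaussian field $(Z_f)_{f\in\cF}$ becomes the restriction $(h_B(y))_{y\in D}$, so that $Z = \inf_D h_B$. For the variance at a single point, (\ref{4.23}) gives
\[
\sigma^2 \;=\; \sup_{y \in D}\,{\rm var}\bigl(h_B(y)\bigr) \;\le\; \frac{c}{(KL)^{d-2}},
\]
and from (\ref{4.17}) combined with the capacity lower bound ${\rm cap}(B) \ge c\,L^{d-2}$ of (\ref{1.7}),
\[
\bigl|\IE[\inf_D h_B]\bigr| \;\le\; \frac{c_4}{K}\,\sqrt{\frac{1}{{\rm cap}(B)}} \;\le\; \frac{c}{K\,L^{\frac{d-2}{2}}}.
\]
The exact same argument applied to the centered Gaussian field $(-h_B(y))_{y \in D}$ (which has the same covariance structure) yields the matching bound $|\IE[\sup_D h_B]| \le c/(K L^{(d-2)/2})$.

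I then invoke the Borell-TIS Inequality (Theorem 2.1.1 of \cite{AdleTayl07}) as in the proof of Corollary \ref{cor4.4}, once for $h_B$ and once for $-h_B$ restricted to $D$. This yields
\[
\IP[\sup_D h_B \ge a] \;\le\; \exp\Bigl\{-\tfrac{1}{2\sigma^2}\bigl(a - |\IE[\sup_D h_B]|\bigr)_+^2\Bigr\} \;\le\; \exp\Bigl\{-c\,(KL)^{d-2}\Bigl(a - \tfrac{c}{KL^{(d-2)/2}}\Bigr)_+^2\Bigr\},
\]
and an identical bound for $\IP[\inf_D h_B \le -a] = \IP[\sup_D(-h_B) \ge a]$. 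A union bound $\IP[\sup_D|h_B| \ge a] \le \IP[\sup_D h_B \ge a] + \IP[\inf_D h_B \le -a]$ then produces the factor of $2$ and concludes the proof.

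There is no genuine obstacle here, since the hard work (metric entropy / chaining control of $\IE[Z]$ and variance control of $Z_f$) was already carried out in Theorem \ref{theo4.2}. The only points requiring a little care are (i) verifying that the $|\cC|=1$ case fits into the framework (it does, trivially), and (ii) observing that Theorem \ref{theo4.2} applied to $-h_B$ gives a mean bound on $\sup_D h_B$ — this is immediate because the field $(-h_B(y))_y$ has the same covariance as $(h_B(y))_y$ and the capacity bound does not see the sign.
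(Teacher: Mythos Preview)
Your proof is correct and follows essentially the same approach as the paper: specialize to $|\cC|=1$, invoke the variance bound (\ref{4.23}) and the mean bound (\ref{4.17}) (combined with (\ref{1.7})), apply Borell--TIS to both $h_B$ and $-h_B$, and finish with a union bound. The paper's own proof is slightly more terse but identical in substance.
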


\begin{proof}
We use (\ref{4.17}) when $|\cC| = 1$ to bound $|\IE[\inf_D  h_B]|$ (here, we can actually improve the bound by using throughout in the proof the fact we do not need $\gamma(K,L)$ in the last line of (\ref{4.35}), but we will not need this sharper bound). The same bound holds for $\IE[\sup_D h_B]$. We also know that ${\rm var}(h_B(x)) \le c(KL)^{-(d-2)}$, for $x$ in $D$ by (\ref{4.23}). The claim now follows from the Borell-TIS Inequality.
\end{proof}

\bigskip\bigskip
\section{Disconnection upper bound in the strong percolative regime}
\setcounter{equation}{0}

In this section we derive an asymptotic upper bound on the probability of the disconnection event $A_N$ from (\ref{0.6}) (or (\ref{2.1})) that corresponds to the absence of a path in $E^{\ge \alpha}$ going from $\partial B_N$ to $S_N$. We assume that $\alpha$ is in a ``strongly percolative regime for $E^{\ge \alpha}$'', more precisely, that $\alpha < \ov{h}$, where $\ov{h}$ is the critical value introduced in (\ref{5.3}) below. Although little is known about $\ov{h}$ at present, see Remark \ref{rem5.1}, one may hope that $\ov{h} = h_* = h_{**}$. If the equalities $\ov{h} = h_* = h_{**}$ hold (this is of course an open problem), then the asymptotic upper bound from the main Theorem \ref{theo5.5} of this section and the asymptotic lower bound from Theorem \ref{theo2.1} actually match.

\medskip
It may be useful to provide at this point an informal outline of the strategy of the proof of the main asymptotic upper bound. We choose the scale $L$ from the previous section, of order $(N \log N)^{\frac{1}{d-1}}$ and attach to each $L$-box a decomposition of the Gaussian free field into an harmonic average of the field and a local field, see (\ref{4.5}). We consider the various ``columns'' of $L$-boxes going from the surface of $B_N$ to $S_N$, and show that, up to a super-exponentially decaying probability, in most columns, the local field in each box of the column is in a percolative mode when one looks at levels slightly below $\ov{h}$, see Proposition \ref{prop5.4}. Hence, on the disconnection event $A_N$, up to a super-exponentially decaying probability, there will be in most columns a box where the harmonic average part of the field almost gets below the negative level $-(\ov{h} - \alpha)$ (otherwise the column would offer some path in $E^{\ge \alpha}$ from $\partial B_N$ to $S_N$, due to the good properties of the local fields in all boxes of this column). Bringing into play the upper bounds from Section 4, the derivation of the desired upper bounds on $\IP[A_N]$ will then be reduced to finding uniform lower bounds on ${\rm cap}(C)$, where $C$ corresponds to the union of $L$-boxes that we selected from the columns, see Lemma \ref{lem5.6}.

\medskip
We will now define the critical value $\ov{h}$. We keep the notation of Section 4. Given $\alpha > \beta$ in $\IR$, we say that $\varphi$ strongly percolates at levels $\alpha, \beta$ when (see (\ref{4.3}), (\ref{4.4}) for notation)
\begin{equation}\label{5.1}
\mbox{$\lim\limits_L \; \mbox{\f $\dis\frac{1}{\log L}$} \; \log \IP\Big[B_0 \cap E^{\ge \alpha}$ has no component of diameter at least $\mbox{\f $\dis\frac{L}{10}$}\Big] = -\infty$,}
\end{equation}
and for any $z = Le$, with $|e| = 1$
\begin{equation}\label{5.2}
\begin{split}
\lim\limits_L \; \dis\frac{1}{\log L} \; \log \IP \big[ & \mbox{there exist components of $B_0 \cap E^{\ge \alpha}$ and $B_z \cap E^{\ge \alpha}$}
\\[-2ex]
&\mbox{with diameter at least $\mbox{\f $\dis\frac{L}{10}$}$, which are not connected}
\\[-0.5ex]
& \mbox{in $D \cap E^{\ge \beta}\big] = -\infty$}.
\end{split}
\end{equation}
We then define the critical value
\begin{equation}\label{5.3}
\mbox{$\ov{h} = \sup\{h \in \IR$; for all $h > \alpha > \beta$, $\varphi$ strongly percolates at levels $\alpha,\beta\}$}.
\end{equation}
We will refer to estimates such as in (\ref{5.1}) or (\ref{5.2}), as super-polynomial decay (in $L$) of the probabilities under consideration.

\begin{remark}\label{rem5.1} \rm ~

\medskip\n
1) Using a union bound, it is straightforward to see that when $\varphi$ strongly percolates at levels $\alpha, \beta$, then $\lim_L \IP[B(0,L) \stackrel{\ge \beta}{\longleftrightarrow} \partial B(0,2L)] =1$. In particular, by the definition of $h_{**}$ in (\ref{0.5}), this shows that $\ov{h} \le h_{**}$. Actually, one can patch up crossings in $E^{\ge \beta}$ from $B(0,2^k)$ to $\partial B(0,2^{k+1})$, for $k \ge k_0$, with the help of (\ref{5.2}), and find, with a union bound, that when $\gamma < \beta < \alpha <  \ov{h}$, then $E^{\ge \gamma}$ percolates with positive probability (and hence, probability one, by ergodicity). This shows that
\begin{equation}\label{5.4}
\ov{h}  \le h_{*}  (\le h_{**}).
\end{equation}
2) By similar considerations as in the proof of Theorem 2.5 of \cite{DrewRathSapo14b}, one can also show that
\begin{equation}\label{5.5}
\ov{h} >  - \infty.
\end{equation}
We simply sketch the argument. One considers $\wt{h}_{**}$ defined similarly as $h_{**}$ in (\ref{0.5}), but replacing pathes by $*$-paths in the definition of the event under the probability. One can show a similar estimate as in (\ref{1.19}) when $\alpha > \wt{h}_{**}$, replacing paths by $*$-paths. One can then apply these estimates to the Gaussian free field $-\varphi$ and see that when $\gamma < - \wt{h}_{**}$ the excursion set $E^{< \gamma} (= \{x \in \IZ^d; \varphi_x < \gamma\})$ is ``strongly non $*$-percolative''. From this feature, and the $*$-connectedness of exterior boundaries of finite connected subsets of $\IZ^d$, see for instance Lemma 2 of \cite{Tima12}, one deduces that $\varphi$ strongly percolates at levels $\alpha, \beta$ when $\beta < \alpha < \gamma < - \wt{h}_{**}$. Thus, $\ov{h} \ge - \wt{h}_{**}$ and in particular, the finiteness of $\ov{h}$ claimed in (\ref{5.5}) follows.

\medskip
Little is known about $\ov{h}$ otherwise. It is an open question whether $\ov{h} = h_* = h_{**}$ and even more modestly whether $\ov{h} \ge 0$.

\medskip\n
3) If one chooses $\beta = \alpha$, in place of $\beta < \alpha$, in condition (\ref{5.2}), one requires a more stringent condition (because the probability in this modification of (\ref{5.2}) becomes bigger). The corresponding $\wt{h}$ one obtains in place of $\ov{h}$ as the supremum of the values $h$ such that (\ref{5.1}) and the modified (\ref{5.2}) hold for all $\alpha < h$, satisfies $\wt{h} \le \ov{h}$. In addition, using a union bound, it is routine to see that when (\ref{5.1}) and the modified (\ref{5.2}) hold at level $\alpha$, then $E^{\ge \alpha}$ percolates with positive probability (and hence, with probability one, by ergodicity). One thus finds that $\wt{h} \le h_*$. Looking at the excursion-set $E^{\ge \alpha}$ for $\alpha < \wt{h}$ is very much in the spirit of condition S1 in (1.6) of \cite{DrewRathSapo14b}.

\medskip
The present choice of (\ref{5.1}), (\ref{5.2}) of course leads to an $\ov{h}$ (formally) bigger than $\wt{h}$, but more pragmatically it minimizes the changes we will make in the next section, when working in high dimension.

\medskip\n
4) Let us incidentally point out that in the case of the vacant set of random interlacements at a small positive level $u$, estimates such as (\ref{5.1}) and (\ref{5.2}) (with in fact a much stronger stretched exponential decay, and $E^{\ge \alpha}$, $E^{\ge \beta}$ both replaced by the vacant set $\cV^u$) are known to hold, see \cite{DrewRathSapo14a} and also \cite{Teix11}, when $d \ge 5$. \hfill $\square$
\end{remark}

\medskip
Given $L \ge 1$, $K \ge 100$ and $B$ an $L$-box (see below (\ref{4.4})), we attach to the box $B$ a decomposition of the Gaussian free field as in (\ref{4.5})
\begin{equation}\label{5.6}
\varphi = h_B + \psi_B,
\end{equation}
into an harmonic average of the field and a local field.

\pagebreak
We now introduce a notion of $B$ being $\psi$-good, which will be fulfilled with high probability when the levels involved are smaller than $\ov{h}$ (see Proposition \ref{prop5.2}). More precisely, given $\gamma > \delta$ in $\IR$, we say that an $L$-box $B$ (i.e. $B = B_z$, with $z \in \IL$) is $\psi$-good at levels $\gamma, \delta $ when
\begin{equation}\label{5.7}
\mbox{$B \cap \{x; \psi_B(x) \ge \gamma\}$ contains a component of diameter a least $\mbox{\f $\dis\frac{L}{10}$}$},
\end{equation}
and for any neighboring box $B'$ of $B$ (i.e. $B'= B_{z'}$, with $z' \in \IL$ and $|z' - z| = L$), 
\begin{equation}\label{5.8}
\begin{array}{l}
\mbox{any two components of $B \cap \{x; \psi_B(x) \ge \gamma\}$ and $B' \cap \{x; \psi_{B'}(x) \ge \gamma\}$ with}
\\
\mbox{diameter at least $\frac{L}{10}$ are connected in $D \cap\{x; \psi_B(x) \ge \delta\}$}.
\end{array}
\end{equation}
Otherwise, we say that $B$ is $\psi$-bad at level $\gamma, \delta$. We will see in Proposition \ref{prop5.2} below that when $\ov{h} > \gamma > \delta$, then for large $L$, $L$-boxes are typically $\psi$-good at levels $\gamma, \delta$.

\medskip
We also introduce a notion of $h$-good. Given $a > 0$, we say that an $L$-box $B$ is $h$-good at level $a$ if
\begin{equation}\label{5.9}
\inf\limits_D h_B > -a.
\end{equation}
Otherwise, we say that $B$ is $h$-bad at level $a$ (let us point out that $B$ being $h$-good at level $a$, or $B$ being $h$-bad at level $a$ are $\sigma(\varphi_x,x \in U \cup \partial U)$-measurable events). We already derived estimates on the probability that a collection of boxes satisfying (\ref{4.10}) are $h$-bad in Corollary \ref{cor4.4}. We will now control the probability of occurence of $\psi$-bad boxes when operating at levels below $\ov{h}$.

\begin{proposition}\label{prop5.2} $(K \ge 100)$

\medskip
Assume $\ov{h} > \gamma > \delta$, then (in the notation of (\ref{4.3}))
\begin{equation}\label{5.10}
\lim\limits_L \; \mbox{\f $\dis\frac{1}{\log L}$} \;\log \mbox{$\IP[ B_0$ is $\psi$-bad at levels $\gamma, \delta] = - \infty$}.
\end{equation}
\end{proposition}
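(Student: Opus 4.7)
The plan is to transfer the strong percolation of $\varphi$ (which holds below $\ov{h}$ by the very definition (\ref{5.3})) to an analogous statement for the local field $\psi_{B_0}$, via the decomposition $\varphi = h_{B_0} + \psi_{B_0}$. The identity $\{\psi_{B_0} \ge s\} = \{\varphi \ge s + h_{B_0}\}$ on $U_0$ shows that once $h_{B_0}$ is uniformly small (of order $\ve$) on $D_0$, the level sets $\{\psi_{B_0} \ge s\}$ are sandwiched between level sets of $\varphi$ at heights $s \pm \ve$; the analogous comparison for each neighboring local field $\psi_{B_z}$, $|z| = L$, takes care of the second condition (\ref{5.8}).

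Concretely, I would pick $\ve > 0$ so small that $\gamma + \ve < \ov{h}$ and $\gamma - \delta > 2\ve$, and introduce three auxiliary levels $\alpha = \gamma + \ve$, $\alpha' = \gamma - \ve$, $\beta = \delta + \ve$, all strictly below $\ov{h}$ with $\alpha > \alpha' > \beta$. By the definition of $\ov{h}$, condition (\ref{5.1}) at level $\alpha$ and condition (\ref{5.2}) at levels $\alpha',\beta$ (applied to each of the $2d$ unit directions $e$) then hold, so their ``bad'' probabilities decay faster than any polynomial in $L$. Simultaneously I would introduce the $h$-regularity event
\[
G_L = \{\sup\nolimits_{D_0}|h_{B_0}| \le \ve\} \cap \bigcap_{z \in \IL,\; |z| = L} \{\sup\nolimits_{D_z}|h_{B_z}| \le \ve\},
\]
whose complement has probability decaying faster than any polynomial in $L$, by Corollary \ref{cor4.5} applied with $a = \ve$ (fixed) to each of the $1 + 2d$ boxes involved, together with a union bound.

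On $G_L$ intersected with the strong-percolation events above, $B_0$ is $\psi$-good at levels $\gamma,\delta$: condition (\ref{5.7}) follows because on $G_L$ one has $B_0 \cap E^{\ge \alpha} \subseteq B_0 \cap \{\psi_{B_0} \ge \gamma\}$, which transfers the diameter-$L/10$ component supplied by (\ref{5.1}). For (\ref{5.8}), given a neighbor $B' = B_z$ with $|z| = L$ and two components $C \subseteq B_0 \cap \{\psi_{B_0} \ge \gamma\}$ and $C' \subseteq B' \cap \{\psi_{B'} \ge \gamma\}$ of diameter at least $L/10$, the reverse containment on $G_L$ places $C$ (resp.\ $C'$) inside a component of $B_0 \cap E^{\ge \alpha'}$ (resp.\ $B' \cap E^{\ge \alpha'}$) of diameter at least $L/10$, and then (\ref{5.2}) at levels $\alpha',\beta$ joins these two inside $D_0 \cap E^{\ge \beta}$; on $G_L$ this joining path lies in $D_0 \cap \{\psi_{B_0} \ge \beta - \ve\} = D_0 \cap \{\psi_{B_0} \ge \delta\}$, as required. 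A union bound over the three types of bad events (and the $2d$ neighbor directions) then yields (\ref{5.10}).

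The main subtlety is the level bookkeeping: one needs \emph{three} distinct levels $\alpha > \alpha' > \beta$, all below $\ov{h}$, so that the containments between $\{\psi_{B_0} \ge \cdot\}$ and $E^{\ge \cdot}$ in both directions have $\ve$ of slack. This is exactly what the hypothesis $\gamma > \delta$ allows, via the choice $\gamma - \delta > 2\ve$. The $h$-regularity part is soft and comes essentially for free from Corollary \ref{cor4.5}, and combining the bounds for $\psi$ and for $h$ is just a union bound.
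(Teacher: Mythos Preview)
Your argument is correct and follows essentially the same route as the paper: control $\sup_D |h_B|$ (and the analogous quantity for each neighbor) via Corollary~\ref{cor4.5}, then sandwich the $\psi$-level sets between $\varphi$-level sets at nearby heights and invoke (\ref{5.1}) and (\ref{5.2}) at auxiliary levels strictly below $\ov{h}$. The paper chooses its auxiliary levels slightly differently (using $\alpha>\beta>\gamma$ for (\ref{5.1}) and $\tfrac{\gamma+\delta}{2}>\tfrac{\gamma+3\delta}{4}$ for (\ref{5.2})), but this is only a cosmetic variation of your $\gamma\pm\ve$, $\delta+\ve$ scheme.
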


\begin{proof}
We pick $\alpha, \beta$ so that $\ov{h} > \alpha > \beta > \gamma > \delta$. We will apply (\ref{5.1}) with $\alpha,\beta$ and (\ref{5.2}) with $\alpha$ replaced by $\frac{\gamma + \delta}{2}$ and $\beta$ by $\frac{\gamma + 3 \delta}{4}$.

\medskip
By (\ref{5.1}) we know that the probability that $B \cap E^{\ge \alpha}$ has no component of diameter at least $\frac{L}{10}$ decays super-polynomially in $L$. By Corollary \ref{cor4.5} we also know that
\begin{equation}\label{5.11}
\lim\limits_L \; \mbox{\f $\dis\frac{1}{\log L}$} \; \log \IP\Big[\sup\limits_D |\varphi -\psi_B| \ge (\alpha - \gamma) \wedge \mbox{\f $\dis\frac{(\gamma - \delta)}{4}$}\Big] = - \infty\,.
\end{equation}
Hence, the probability that $B \cap \{\psi_B \ge \gamma\}$ has no component of diameter at least $\frac{L}{10}$ decays super-polynomially in $L$. This takes care of the probability that (\ref{5.7}) does not hold.

\medskip
We now estimate the probability that (\ref{5.8}) does not hold. When $B$ and $B'$ are neighboring $L$-boxes and $\sup_D |\varphi - \psi_B| \le \frac{1}{4} \,(\gamma - \delta)$ as well as $\sup_{D'} |\varphi - \psi_{B'}| \le \frac{1}{4} \,(\gamma - \delta)$ hold, then the connected sets of $B \cap \{\psi_B \ge \gamma\}$ and $B' \cap \{\psi_{B'} \ge \gamma\}$ with diameter at least $\frac{L}{10}$ are also connected sets of $B \cap E^{\ge \frac{\gamma + \delta}{2}}$ and $B' \cap E^{\ge \frac{\gamma + \delta}{2}}$ respectively, with diameter at least $\frac{L}{10}$. Applying (\ref{5.2}) with $\alpha$ replaced by $\frac{\gamma + \delta}{2}$ and $\beta$ by $\frac{\gamma + 3 \delta}{4}$, up to super-polynomially decreasing probability in $L$, when $\sup_D |\varphi - \psi_B| \le \frac{1}{4} \,(\gamma - \delta)$ and $\sup_{D'} |\varphi - \psi_{B'} | \le \frac{1}{4} \,(\gamma - \delta)$ hold, then any connected components of $B \cap \{\psi_B \ge \gamma\}$ and $B' \cap 
\{\psi_{B'} \ge \gamma\}$ with diameter at least $\frac{L}{10}$ are connected in $D \cap E^{\ge \frac{\gamma + 3 \delta}{4}} \subseteq D \cap \{\psi_B \ge \delta\}$. By (\ref{5.11}) we thus find that the probability that (\ref{5.8}) does not hold decays super-polynomially in $L$, and this completes the proof of (\ref{5.10}).
\end{proof}

\medskip
We will now use paths of good boxes to construct paths in suitable excursion sets of the Gaussian free field, and also state some independence properties of the $\psi$-good boxes. We introduce one further notation:
\begin{equation}\label{5.12}
\ov{K} = 2 K + 3.
\end{equation}
\begin{lemma}\label{lem5.3} (recall $L \ge 1, K\ge 100, \gamma > \delta, a > 0$)

\medskip
If $\cC$ is a subset of $\IL$ of points at mutual distance at least $\ov{K} L$, then
\begin{equation}\label{5.13}
\mbox{the events $B$ is $\psi$-good at levels $\gamma, \delta$, as $B$ runs over $\cC$, are independent}.
\end{equation}
If $B^i$, $0 \le i \le n$, is a sequence of neighboring $L$-boxes, which are $\psi$-good at levels $\gamma, \delta$ and $h$-good at level $a$, then
\begin{equation}\label{5.14}
\begin{array}{l}
\mbox{there exists a path in $E^{\ge \delta - a} \cap \Big(\bigcup\limits^n_{i=0} D^i\Big)$ starting in $B^0$ and ending in $B^n$,}
\\ 
\mbox{(where $D^i$ stands for the $D$-type box attached to $B^i$, see (\ref{4.3}), (\ref{4.4}))}.
\end{array}
\end{equation}
\end{lemma}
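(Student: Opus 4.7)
The lemma splits into an independence statement (5.13) and a path-construction statement (5.14); I would treat them in turn.

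For (5.13), the plan is to identify the $\sigma$-algebra on which each $\psi$-goodness event depends, and then invoke joint Gaussianity. Writing $\cN_z = \{z\} \cup \{z + Le : e \in \IZ^d, |e| = 1\}$ for the closed lattice neighborhood of $z \in \IL$, conditions (5.7)--(5.8) defining $\psi$-goodness of $B_z$ only involve the local fields $\psi^{z'}$ for $z' \in \cN_z$, so the event $\{B_z \text{ is } \psi\text{-good at levels } \gamma, \delta\}$ is $\sigma(\psi^{z'} : z' \in \cN_z)$-measurable. For distinct $z, w \in \cC$ with $|z - w|_\infty \ge \ov{K}L = (2K+3)L$, the triangle inequality yields $|z' - w'|_\infty \ge (2K+3)L - 2L = L + 2KL$ for every $z' \in \cN_z$ and $w' \in \cN_w$, so applying Lemma~\ref{lem4.1} to the two-point collection $\{z', w'\}$ gives that $\psi^{z'}$ and $\psi^{w'}$ are independent. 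Since all the fields $\psi^{z'}$ (as $z'$ varies over $\bigcup_{z \in \cC} \cN_z$) are jointly centered Gaussian, the vanishing of all such pairwise cross-covariances is equivalent to the joint independence of the groupings $\{\psi^{z'} : z' \in \cN_z\}$, $z \in \cC$, from which (5.13) follows.

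For (5.14), I would build the path by concatenation, one box at a time. The $\psi$-goodness of each $B^i$ supplies, via (5.7), a connected component $C^i \subseteq B^i \cap \{\psi_{B^i} \ge \gamma\}$ of diameter at least $L/10$; and, via (5.8) applied to the neighboring pair $(B^i, B^{i+1})$, a nearest-neighbor path $\pi_i$ joining $C^i$ to $C^{i+1}$ inside $D^i \cap \{\psi_{B^i} \ge \delta\}$. The $h$-goodness at level $a$ gives $\inf_{D^i} h_{B^i} > -a$, so on $D^i \cap \{\psi_{B^i} \ge \delta\}$ one has $\varphi = h_{B^i} + \psi_{B^i} > \delta - a$, showing $\pi_i \subseteq D^i \cap E^{\ge \delta - a}$; similarly each $C^i \subseteq D^i \cap E^{\ge \gamma - a} \subseteq D^i \cap E^{\ge \delta - a}$ because $\gamma > \delta$. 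Concatenating the $\pi_i$'s, and using the connectedness of each $C^i$ to bridge between the endpoints of consecutive $\pi_i$'s inside $C^i$, produces the desired nearest-neighbor path in $E^{\ge \delta - a} \cap \bigcup_{i=0}^n D^i$ running from $B^0$ to $B^n$.

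The only slightly delicate point is the bookkeeping for (5.13): $\psi$-goodness for a single box $B_z$ depends not merely on $\psi^z$ but also on the local fields at its $2d$ lattice neighbors, which forces the separation $\ov{K}L$ within $\cC$ to exceed the bare independence threshold $L + 2KL$ of (4.8) by an additional $2L$; this is exactly why $\ov{K} = 2K + 3$ is the correct value. Once this accounting is made explicit, both parts reduce to direct applications of the Gaussian decomposition (\ref{4.5}) and its Markov-type orthogonality.
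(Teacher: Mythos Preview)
Your proof is correct and follows essentially the same route as the paper: for (5.13) you identify that $\psi$-goodness of $B_z$ is measurable in $\{\psi^{z'}: z' \in \cN_z\}$, use the triangle inequality to get the $L+2KL$ separation between neighborhoods, and then invoke joint Gaussianity plus pairwise orthogonality (Lemma~\ref{lem4.1}); for (5.14) you link the large $\psi$-components in consecutive boxes via (5.8), upgrade to $E^{\ge \delta - a}$ using $h$-goodness on each $D^i$, and concatenate. Your discussion of why $\ov{K}=2K+3$ is precisely the right separation is a nice explicit touch that the paper leaves implicit.
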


\begin{proof}
We first prove (\ref{5.13}). We note that the event $B$ is $\psi$-good at levels $\gamma, \delta$ is measurable with respect to the $\sigma$-algebra generated by the random fields $\psi_{B'}$, where $B'$ is either $B$ or a neighbor of $B$. As $B$ runs over $\cC$, these collections (viewed as subsets of $\IL$) are at mutual distance at least $L + 2 KL$. By (\ref{4.9}) one has pairwise orthogonality of the centered Gaussian processes $\psi_{B'}$ (with $B'$ equal to $B$ or a neighbor of $B$), as $B$ runs over $\cC$. Since these random fields are jointly Gaussian, the claim (\ref{5.13}) follows.

\medskip
As for (\ref{5.14}), we observe that when $B$ and $B'$ are neighbors and both $\psi$-good at levels $\gamma, \delta$, then there exists components of diameter at least $\frac{L}{10}$ in $B \cap \{\psi_B \ge \gamma\}$ and $B' \cap \{\psi_{B'} \ge \gamma\}$ and any such components are connected in $D \cap \{\psi_B \ge \delta\}$. If $B$ and $B'$ are $h$-good at level $a$ as well, then these components lie in, and are connected in $D \cap E^{\ge \delta - a}$. Using induction, the claim (\ref{5.14}) follows. 
\end{proof}

\medskip
We will now specify the level $\alpha$ entering the definition of $A_N$ in (\ref{0.6}) or (\ref{2.1}), and the relation between the scales $L$ and $N$. From now on, we fix $\alpha$ in $\IR$ such that
\begin{equation}\label{5.15}
\alpha < \ov{h}.
\end{equation}
We pick a (large) $\Gamma \ge 1$ and set
\begin{equation}\label{5.16}
L = [(\Gamma N \log N)^{\frac{1}{d-1}}] \quad \mbox{(we recall that $\IL = L \IZ^d$, see (\ref{4.2}))}.
\end{equation}
We then choose the parameters $\gamma > \delta$ and $a > 0$ respectively entering the notions of $\psi$-good boxes and $h$-good boxes so that
\begin{equation}\label{5.17}
\alpha + a = \delta < \gamma < \ov{h}
\end{equation}
(we will eventually let $a$ tend to $\ov{h} - \alpha$, and $\gamma, \delta$ to $\ov{h}$).

\medskip
We then introduce (with $B$ an arbitrary $L$-box, see below (\ref{4.4}))
\begin{equation}\label{5.18}
\mbox{$\eta = \IP[B$ is $\psi$-bad at levels $\gamma, \delta]$ and $\rho = \sqrt{\log L / \log (\frac{1}{\eta})} \; \underset{L}{\stackrel{(\ref{5.10})}{\longrightarrow}} 0$}.
\end{equation}

\medskip
As a preparation for the proof of the main upper bound on $\IP[A_N]$, we will first see that ``typically, there are few columns of $\psi$-bad boxes''. We first need some notation.

\medskip
Given $e \in \IZ^d$, with $|e| = 1$ and $N \ge 1$, we denote by $F_{e,N}$ the face in the direction $e$ of $B_N$, namely the set $\{x \in B_N$; $x \cdot e = N\}$. For each face $F_{e,N}$, we consider the set of columns, where a column consists of $L$-boxes contained in $\{x \in \IZ^d; x \cdot e > N\} \cap B_{(M+1)N}$ (recall $M > 1$ from above (\ref{2.1})), with same projection in the $e$-direction on $\{x \in \IZ^d$; $x \cdot e = N\}$, which we require to be contained in the face $F_{e,N}$ of $B_N$.

\medskip
As mentioned above, we will now see that few columns contain a $\psi$-bad box. To this end, we introduce the event
\begin{equation}\label{5.19}
\mbox{$C_N = \{$there are at least $\rho(\frac{N}{L})^{d-1}$ columns containing some $\psi$-bad box$\}$}
\end{equation}
with $\rho$ as in (\ref{5.18}) and $\psi$-bad corresponding to the levels $\gamma, \delta$ from (\ref{5.17}). We will now show the super-exponential decay (at speed $N^{d-2}$) of the probability of $C_N$.
\begin{proposition}\label{prop5.4}
\begin{equation}\label{5.20}
\lim\limits_N \; \dis\frac{1}{N^{d-2}} \;\log \IP[C_N] = - \infty .
\end{equation}
\end{proposition}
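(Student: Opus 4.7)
The plan is to reduce the event $C_N$ to a binomial large-deviation estimate, via a decoupling of spatially well-separated columns. Each column attached to a face of $B_N$ contains at most $O(N/L)$ of the $L$-boxes $B_z$, $z \in \IL$, so a union bound together with \eqref{5.18} gives
\begin{equation*}
p \;:=\; \IP[\text{a fixed column contains a $\psi$-bad box}] \;\le\; c\,\tfrac{N}{L}\,\eta.
\end{equation*}
By \eqref{5.10}, $\eta$ decays super-polynomially in $L$, while $N/L$ is polynomial in $L$, so $\log(1/p) = (1-o(1))\log(1/\eta)$ as $L \to \infty$.

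Next, I would partition the set of columns (attached to the $2d$ faces of $B_N$) into $O(\ov{K}^{\,d-1})$ sub-families, each consisting of columns whose transverse projections lie at mutual $\ell^\infty$-distance $\ge \ov{K} L$, via a standard grid partition of the face lattice with step $\ov{K}$. Within such a sub-family, any two distinct columns are composed of $L$-boxes at mutual $\ell^\infty$-distance $\ge \ov{K} L$, so Lemma \ref{lem5.3} yields joint independence of all the fields $\psi_{B'}$ entering the column-events, and hence independence of the events ``column contains a $\psi$-bad box''.

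On $C_N$, pigeonhole produces a sub-family of size $n \le c(N/L)^{d-1}$ in which at least $m := c'\rho(N/L)^{d-1}$ columns contain a $\psi$-bad box. By the independence above and a classical binomial tail estimate,
\begin{equation*}
\IP[\text{at least $m$ out of $n$ successful columns}] \;\le\; \binom{n}{m}p^m \;\le\; \Bigl(\frac{e n p}{m}\Bigr)^{\!m} \;\le\; \Bigl(\frac{C p}{\rho}\Bigr)^{\!m}.
\end{equation*}
The choice of $\rho$ in \eqref{5.18} gives $\log(\rho/p) \ge (1-o(1))\log(1/\eta)$, so after summing over the $O(\ov{K}^{\,d-1})$ sub-families one obtains
\begin{equation*}
-\log\IP[C_N] \;\ge\; (1-o(1))\, m\,\log(1/\eta) \;=\; (1-o(1))\,c'\rho\,(N/L)^{d-1}\,\log(1/\eta).
\end{equation*}

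Finally, $\rho\log(1/\eta) = \sqrt{\log L\cdot\log(1/\eta)}$ by the definition of $\rho$, and $(N/L)^{d-1} \sim N^{d-2}/(\Gamma\log N)$ by \eqref{5.16}, so
\begin{equation*}
\frac{-\log\IP[C_N]}{N^{d-2}} \;\ge\; \frac{c''\sqrt{\log L\cdot\log(1/\eta)}}{\log N} \;\longrightarrow\; +\infty,
\end{equation*}
using once more that $\log(1/\eta)/\log L \to \infty$ by \eqref{5.10}, which yields \eqref{5.20}. The main technical point is the calibration of $\rho$: it must exceed $p$ so that the binomial deviation is genuinely small, yet must not decay too fast, in order that $\rho(N/L)^{d-1}\log(1/\eta)$ dominates $N^{d-2}$; the geometric-mean choice $\rho = \sqrt{\log L / \log(1/\eta)}$ in \eqref{5.18} is precisely what balances these two competing demands.
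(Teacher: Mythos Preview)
Your proposal is correct and follows essentially the same strategy as the paper's proof: a sub-lattice decoupling via Lemma~\ref{lem5.3}, a pigeonhole step, and a Bernoulli large-deviation bound, with the geometric-mean choice of $\rho$ in \eqref{5.18} playing exactly the balancing role you describe. The only (cosmetic) difference is the level at which the partition is made: the paper partitions the set of \emph{boxes} in all columns into $\ov{K}^{\,d}$ sub-collections (via $\IL = \bigcup_y (y+\ov{K}\IL)$) and works with the box-level success probability $\eta$, whereas you partition the set of \emph{columns} into $O(\ov{K}^{\,d-1})$ sub-families and work with the column-level probability $p\le c(N/L)\eta$. Both routes yield the same final bound $-\log\IP[C_N]\ge c\,\sqrt{\log L\cdot\log(1/\eta)}\,(N/L)^{d-1}$. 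Two small points of care in your version: (i) columns attached to adjacent faces of $B_N$ can have boxes within $O(L)$ of each other near corners, so the face index should be included in the partition (absorbed into your $O(\ov{K}^{\,d-1})$); (ii) Lemma~\ref{lem5.3} as stated gives independence of single-box events, but your extension to column-events is valid by the same pairwise-orthogonality plus joint-Gaussianity argument in its proof.
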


\begin{proof}
We denote by $m$ the total number of $L$-boxes, which belong to the union of columns. Then, for large $N$,
\begin{equation}\label{5.21}
c_6 \Big(\mbox{\f $\dis\frac{N}{L}$}\Big)^d \le m \le c_7 \Big(\mbox{\f $\dis\frac{MN}{L}$}\Big)^d.
\end{equation}

\n
We write $\IL$ as the disjoint union of the sets $y + \ov{K} \IL$ (recall from (\ref{5.12}) that $\ov{K} = 2 K + 3$), where $y$ varies over $\{0,L,2L,\dots,(\ov{K}-1)L\}^d$. By (\ref{5.13}) in Lemma \ref{lem5.3}, we know that for fixed $y$, the events $\{B$ is $\psi$-bad$\}$, as $B$ varies over $B_z$, with $z \in y + \ov{K} \IL$, are independent. One can thus partition the collection of boxes in the union of columns into $\ov{K}^d$ sub-collections. On the event $C_N$ at least one of these sub-collections contains $\frac{\rho}{\ov{K}^d} \,(\frac{N}{L})^{d-1}$ $\psi$-bad boxes. Of course, each sub-collection contains at most $m$ boxes. We are going to use standard bounds on the probability that a sum of $m$ independent Bernoulli variables with success probability $\eta$ as in (\ref{5.18}) exceed $\frac{\rho}{\ov{K}^d} \,(\frac{N}{L})^{d-1}$. To this end we introduce
\begin{equation}\label{5.22}
\wt{\rho} = \mbox{\f $\dis\frac{\rho}{\ov{K}\,{\!^d}}$} \;\Big(\mbox{\f $\dis\frac{N}{L}$}\Big)^{d-1} \; \mbox{\f $\dis\frac{1}{m}$} \in \Big(\mbox{\f $\dis\frac{\rho}{c_7(\ov{K}M)^d}$} \; \mbox{\f $\dis\frac{L}{N}$} , \; \mbox{\f $\dis\frac{\rho}{c_6 \ov{K}\,{\!^d}}$} \mbox{ \f $\dis\frac{L}{N}$}\Big) \quad \mbox{(by (\ref{5.21}))}.
\end{equation}
As we now explain,
\begin{equation}\label{5.23}
\eta = o(\wt{\rho}), \;\mbox{and $\wt{\rho} \rightarrow 0$, as $N \r \infty$}.
\end{equation}
Indeed, by (\ref{5.18}) and (\ref{5.22}), $\lim_N \,\wt{\rho} = 0$ and
\begin{equation}\label{5.24}
\begin{array}{lcl}
\log \mbox{\f $\dis\frac{\wt{\rho}}{\eta}$} &\!\!\! = &\!\!\!  \log \rho + \log \mbox{\f $\dis\frac{1}{\eta}$} + 
\log \Big( \mbox{\f $\dis\frac{1}{\ov{K}\,{\!^d} m}$} \;\Big(\mbox{\f $\dis\frac{N}{L}$}\Big)^{d-1}\Big)
\\
&\!\!\!  \stackrel{(\ref{5.18})}{=} &\!\!\!  \fr   \log \log L - \fr \;\log \log \mbox{\f $\dis\frac{1}{\eta}$} + \log \mbox{\f $\dis\frac{1}{\eta}$} + \log \Big(\mbox{\f $\dis\frac{1}{\ov{K}\,{\!^d}m}$} \;\Big(\mbox{\f $\dis\frac{N}{L}$}\Big)^{d-1}\Big)
\\
& \!\!\! \sim &\!\!\!  \log \mbox{\f $\dis\frac{1}{\eta}$}  \; (\r \infty), \;\mbox{as $N \r \infty$},
\end{array}
\end{equation}
because $\log \frac{1}{\eta} / \log L \underset{N}{\r} \infty$ by (\ref{5.18}), and the last term of the second line of (\ref{5.24}) remains $O(\log L)$, as $N \r \infty$, by (\ref{5.21}) and (\ref{5.16}). The claim (\ref{5.23}) is now proven.

\bigskip
We can now use standard exponential bounds on sums of independent Bernoulli variables, and, by the argument explained above (\ref{5.22}), we find that for large $N$
\begin{equation}\label{5.25}
\left\{ \begin{array}{l}
\IP[C_N] \le \ov{K}\,{\!^d} \exp\{- m I_N\}, \;\mbox{where}
\\[1ex]
I_N = \wt{\rho} \log \mbox{\f $\dis\frac{\wt{\rho}}{\eta}$} + (1 - \wt{\rho}) \log \mbox{\f $\dis\frac{1- \wt{\rho}}{1 - \eta}$} .
\end{array}\right.
\end{equation}
We will now see that
\begin{equation}\label{5.26}
N^{d-2} = o(m I_N), \; \mbox{as $N \r \infty$},
\end{equation}

\medskip\n
and our claim (\ref{5.20}) will then follow from (\ref{5.25}). By (\ref{5.23}) and (\ref{5.24}) we see that when $N$ tends to infinity,
\begin{equation*}
I_N \sim \wt{\rho} \log \mbox{\f $\dis\frac{\wt{\rho}}{\eta}$} \sim \wt{\rho} \log \mbox{\f $\dis\frac{1}{\eta}$} .
\end{equation*}

\medskip\n
As a result, we find that when $N$ tends to infinity
\begin{equation}\label{5.27}
\begin{split}
mI_N & \sim \mbox{\f $\dis\frac{\rho}{\ov{K}\,{\!^d}}$} \Big(\mbox{\f $\dis\frac{N}{L}$}\Big)^{d-1}  \log  \mbox{\f $\dis\frac{1}{\eta}$}  \stackrel{(\ref{5.18})}{=} \mbox{\f $\dis\frac{1}{\ov{K}\,{\!^d}}$} \;\sqrt{\log L \log {\textstyle \frac{1}{\eta}}} \; \Big(\mbox{\f $\dis\frac{N}{L}$}\Big)^{d-1} 
\\[2ex]
&\!\!\! \stackrel{(\ref{5.16})}{\sim} \; \mbox{\f $\dis\frac{1}{\Gamma \ov{K}\,{\!^d}}$} \; \mbox{\f $\dis\frac{N^{d-2}}{\log N}$} \;\sqrt{\log L \log {\textstyle \frac{1}{\eta}}}\;.
\end{split}
\end{equation}

\medskip\n
The claim (\ref{5.26}) now follows by (\ref{5.16}) and (\ref{5.18}). As explained above, this completes the proof of (\ref{5.20}).
\end{proof}

We are now ready to state and prove the main result of this section.
\begin{theorem}\label{theo5.5} (recall $\alpha < \ov{h}$, see (\ref{2.1}) and (\ref{5.3}) for notation)
\begin{equation}\label{5.28}
\limsup\limits_N \; \mbox{\f $\dis\frac{1}{N^{d-2}}$} \;\log \IP[A_N] \le - \mbox{\f $\dis\frac{1}{2d}$} \;(\ov{h} - \alpha)^2 {\rm cap}_{\IR^d} ([-1,1]^d).
\end{equation}
(of course, when $\alpha < \wt{h}$, with $\wt{h} \le \ov{h}$ defined in Remark \ref{rem5.1} 3), (\ref{5.28}) holds with $\wt{h}$ in place of $\ov{h}$).
\end{theorem}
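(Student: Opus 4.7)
The plan is to follow the road-map sketched in the introduction, combining Proposition \ref{prop5.4}, Corollary \ref{cor4.4} and a uniform capacity lower bound. First I would split $\IP[A_N] \le \IP[C_N] + \IP[A_N \cap C_N^c]$; by (\ref{5.20}) the first term is super-exponentially small on the scale $N^{d-2}$, so only the second needs work.

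On $C_N^c$ all but at most $\rho(N/L)^{d-1}$ columns (call them \emph{good columns}) consist entirely of $L$-boxes that are $\psi$-good at levels $\gamma,\delta$. On $A_N$ no good column can have every box $h$-good at level $a$, for otherwise Lemma \ref{lem5.3} (see (\ref{5.14})) would produce a nearest-neighbour path in $E^{\ge \delta - a}=E^{\ge \alpha}$ joining $\partial B_N$ to $S_N$ through the column, contradicting the disconnection. Hence in each good column I pick one distinguished $h$-bad $L$-box, producing a family $\cC_0$ of at least $(1-\rho)(N/L)^{d-1}$ boxes with $\inf_D h_B \le -a$. To meet the separation requirement (\ref{4.10}) needed by Corollary \ref{cor4.4}, I then partition the lattice $\IL$ into $\ov{K}^d$ residue classes modulo $\ov{K} L$ (with $\ov{K}$ as in (\ref{5.12})) and retain the class carrying the largest share of $\cC_0$; this yields a family $\cC$ with mutual $|\cdot|_\infty$-distance at least $\ov{K} L \ge L+2KL$, at the price of a factor $\ov{K}^d$ in the size.

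Next I would pay by a union bound for (i) the choice of good columns, (ii) the distinguished $h$-bad box in each, and (iii) the residue class, which contributes a combinatorial factor at most $\ov{K}^d \, 2^{(N/L)^{d-1}}(c N/L)^{(N/L)^{d-1}}$. Thanks to the choice (\ref{5.16}) of $L$, the logarithm of this factor is $O(\Gamma^{-1} N^{d-2})$, and is rendered negligible by sending $\Gamma \to \infty$ at the very end. For each admissible $\cC$, Corollary \ref{cor4.4} gives
\[
\log \IP\Big[\bigcap_{B \in \cC} \{\inf_D h_B \le -a\}\Big] \le - \mbox{\f $\dis\frac{1}{2\alpha(K)}$}\Big(a - \mbox{\f $\dis\frac{c_4}{K}$} \sqrt{|\cC|/{\rm cap}(C)}\Big)^2_+ \,{\rm cap}(C) + o(N^{d-2}),
\]
and since $|\cC| \le (N/L)^{d-1} \sim N^{d-2}/\log N$ while ${\rm cap}(C)$ will be shown to grow like $N^{d-2}$, the correction $c_4 K^{-1}\sqrt{|\cC|/{\rm cap}(C)}$ vanishes as $N\to\infty$.

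The main obstacle is the uniform capacity lower bound $\liminf_N N^{-(d-2)} \inf_\cC {\rm cap}(C) \ge d^{-1} {\rm cap}_{\IR^d}([-1,1]^d)$, where the infimum runs over the admissible families $\cC$ built above. I would establish this by projecting each selected box radially onto the nearest face of $\partial B_N$ and invoking a Wiener-type estimate based on (\ref{1.10})--(\ref{1.11}) together with the Green function asymptotics (\ref{1.2}) to control the cost of projection: since each good column contributes one box, the projected set covers a proportion at least $1-\rho-o(1)$ of $\partial B_N$, and (\ref{3.14}) then delivers the claimed lower bound. Putting everything together and letting successively $N \to \infty$, then $\Gamma \to \infty$, then $a \uparrow \ov{h}-\alpha$ with $\gamma,\delta \uparrow \ov{h}$, and finally $K \to \infty$ (so that $\alpha(K) \to 1$ and $c_4/K \to 0$), yields (\ref{5.28}).
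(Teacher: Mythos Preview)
Your overall strategy matches the paper's, but the step where you extract the separated family $\cC$ and then claim the sharp capacity lower bound has a genuine gap.

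You thin $\cC_0$ by selecting one residue class of $\IL$ modulo $\ov K L$ in \emph{all $d$ coordinates}. This is both unnecessary and destructive. It is unnecessary because $\cC_0$ already contains at most one box per column, so two boxes of $\cC_0$ attached to the same face $F_{e,N}$ automatically differ in their coordinates orthogonal to $e$; restricting to columns whose orthogonal projection lies in a fixed $\ov K L$-sublattice of the face (as the paper does, see (\ref{5.33})\,i)--ii)) already forces mutual $|\cdot|_\infty$-distance at least $\ov K L$. It is destructive because, by also thinning in the $e$-direction, you retain a column only when its selected $h$-bad box happens to sit in the chosen $e$-residue. Under the union bound the positions of those boxes are adversarial, and nothing prevents the surviving columns from projecting onto a small sub-region of the face; one can then only guarantee ${\rm cap}(C)\ge c\,\ov K^{-(d-2)/(d-1)}N^{d-2}$, and since you send $K\to\infty$ at the end, the resulting upper bound becomes vacuous. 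Your sentence ``the projected set covers a proportion at least $1-\rho-o(1)$ of $\partial B_N$'' is true for $\cC_0$ but false for the thinned $\cC$, and it is $\cC$ (not $\cC_0$) to which Corollary \ref{cor4.4} must be applied. The fix is to thin only in the $(d-1)$ directions orthogonal to $e$; the projection $\wt C$ onto $\partial_i B_N$ is then $\ov K L$-dense in $F_N$ (this is (\ref{5.33})\,iv)), and a Wiener-criterion argument (the paper's Lemma \ref{lem5.6}, via the sweeping identity (\ref{1.12})) yields ${\rm cap}(\wt C)\ge (1-o(1)){\rm cap}(B_N)$ uniformly in $K$. One also needs the separate observation (\ref{5.37}) that the projection from $C$ to $\wt C$ does not increase capacity, which relies on the columns being mutually $\ov K L$-separated and is not quite the ``radial projection'' you describe.

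A second, minor point: the path from (\ref{5.14}) starts in the first box $B^0$ of the column, which lies in $\{x\cdot e>N\}$ and need not meet $\partial B_N$, so a fully good column does not directly contradict $A_N$. The paper handles this by first proving the bound for the auxiliary event $\wh A_N$ of (\ref{5.30}) (for which $B^0\subseteq B_{(1+\kappa)N}$ does the job) and then deducing (\ref{5.28}) by the routine rescaling $N'=[N/(1+\kappa)]$ and letting $\kappa\to 0$; you should insert this step.
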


\smallskip
\begin{proof}
We first assume that
\begin{equation}\label{5.29}
M \ge 2.
\end{equation}
We pick $0 < \kappa < \frac{1}{10}$, and introduce the event (recall $S_N = \{x \in \IZ^d; |x|_\infty = [MN]\})$
\begin{equation}\label{5.30}
\wh{A}_N = \{B_{(1 + \kappa)N}\; \overset{^{\mbox{\footnotesize $\ge \alpha$}}}{\mbox{\Large $\longleftrightarrow$}} \hspace{-3.8ex}/ \quad \;S_N\}.
\end{equation}

\medskip\n
As a (main) step in the proof of (\ref{5.28}), we will first show that
\begin{equation}\label{5.31}
\limsup\limits_N \; \mbox{\f $\dis\frac{1}{N^{d-2}}$} \;\log \IP[\wh{A}_N] \le -  \mbox{\f $\dis\frac{1}{2d}$} \;(\ov{h} - \alpha)^2 {\rm cap}_{\IR^d} ([-1,1]^d).
\end{equation}

\medskip\n
The claim (\ref{5.28}) will then quickly follow.

\medskip
We thus begin with the proof of (\ref{5.31}). We know by (\ref{5.14}) of Lemma \ref{lem5.3} that for large $N$, if all boxes in a column are good (i.e. both $\psi$-good and $h$-good, with the parameters chosen in (\ref{5.17})), then there exists a path in $E^{\ge \delta - a} \stackrel{(\ref{5.17})}{=} E^{\ge \alpha}$ from $B_{(1 + \kappa)N}$ to $S_N$. Hence, for large $N$, on $\wh{A}_N$ all columns contain a bad box. Further, by definition of $C_N$ in (\ref{5.19}), we see that for large $N$,
\begin{equation}\label{5.32}
\begin{array}{l}
\mbox{on $D_N = \wh{A}_N \backslash C_N$, except for at most $\rho(\frac{N}{L})^{d-1}$ columns, all columns contain}
\\
\mbox{an $h$-bad box (at level $a$ from (\ref{5.17}))}.
\end{array}
\end{equation}
Thus, on $D_N$ we can select a set of $[\rho (\frac{N}{L})^{d-1}]$ columns, and then, for each column in the remaining set of columns, select a box $B$ in the column, which is $h$-bad. We further remove columns that have their projection on the face $F_{e,N}$ attached to the column, at a distance less than $\ov{K} L$ from any other face $F_{e',N}$, $e' \not= e$. Then, restricting to a sub-lattice, we only keep columns attached to each given face $F_{e,N}$, with $|e| = 1$, which are at mutual $|\cdot |_\infty$-distance at least $\ov{K} L$ (specifically, we only keep columns of boxes that have labels $z \in \IL$, such that the projection of $z$ on the orthogonal space to $e$ belongs to $\ov{K} \IL$).

\medskip
We write $\wt{C}$ for the subset of $\partial_i B_N = \bigcup_{|e| = 1} F_{e,N}$ (the internal boundary of $B_N)$ obtained by projecting the selected columns onto the face $F_{e,N}$ attached to the respective columns, and we write $F_N$ for the set of points of $\partial_i B_N$ that belong to a single face, and are at $|\cdot |_\infty$-distance at least $\ov{K} L$ from all other faces.

\medskip 
In the fashion described above, we see that there is a family with cardinality at most $\exp\{c_8 (\frac{N}{L})^{d-1} \log ((M+1)N)\}$ of finite subsets $\cC$ of $L$-boxes such that
\begin{equation}\label{5.33}
\begin{array}{l}
\left\{ \begin{array}{rl}
{\rm i)} & \mbox{the boxes $B$ in $\cC$ belong to mutually distinct columns},
\\[1ex]
{\rm ii)} & \mbox{the columns containing a box of $\cC$ are at mutual $|\cdot |_\infty$-distance} \\
&\mbox{at least $\ov{K} L$},
\\[1ex]
{\rm iii)} & \wt{C} \subseteq F_N,
\\[1ex]
{\rm iv)} & \mbox{at most $c_9(\ov{K})(N^{d-2}  L + \rho N^{d-1})$ points of $F_N$ are at $|\cdot|_\infty$-distance}\\
&\mbox{bigger than $\ov{K} L$ of $\wt{C}$};
\end{array}\right.
\end{array}
\end{equation}

\n
(recall $\wt{C}$ is obtained by projecting the boxes of $\cC$ on the face of $B_N$ attached to the column where the box sits).

\medskip
This yield a ``coarse graining'' of the event $D_N$, in the sense that for large $N$
\begin{equation}\label{5.34}
D_N \subseteq \bigcup\limits_\cC D_{N,\cC}, \;\mbox{where $D_{N,\cC} = \bigcap\limits_{B \in \cC} \{B$ is $h$-bad$\}$}
\end{equation}
(and $\cC$ runs over a family with cardinality at most $\exp\{ c_8 (\frac{N}{L})^{d-1}\log ((M+1)N)\}$, with (\ref{5.33}) fulfilled).

\medskip
By (\ref{5.33}) i) and ii), all $\cC$ in the above family satisfy (\ref{4.10}). We can therefore apply Corollary \ref{cor4.4} to bound $\IP[D_{N,\cC}]$ uniformly over $\cC$. We then find that
\begin{equation}\label{5.35}
\begin{array}{l}
\limsup\limits_N \, \mbox{\f $\dis\frac{1}{N^{d-2}}$} \log  \IP[D_N] \le
\\
 \limsup\limits_N \; \Big\{c_8 \Big(\mbox{\f $\dis\frac{N}{L}$}\Big)^{d-1}   \mbox{\f $\dis\frac{\log ((M+1)N)}{N^{d-2}}$}  +  \sup\limits_{\cC}    \mbox{\f $\dis\frac{1}{N^{d-2}}$} \log \IP[D_{N,\cC}]\Big\} \underset{(\ref{4.34})}{\stackrel{(\ref{5.16})}{\le}}
\\[2ex]
\mbox{\f $\dis\frac{c_8}{\Gamma}$} - \liminf\limits_N \, \inf\limits_\cC \Big\{ \mbox{\f $\dis\frac{1}{N^{d-2}}$} \;\fr \,\Big(a - \mbox{\f $\dis\frac{c_4}{K}$} \;\sqrt{\mbox{\f $\dis\frac{|\cC|}{{\rm cap}(C)}$}}\Big)^2_+ \; \mbox{\f $\dis\frac{{\rm cap}(C)}{\alpha(K)}$}\Big\},
\end{array}
\end{equation}

\medskip\n
where we recall that $C = \bigcup_{B \in \cC} B$, cf.~(\ref{4.11}), and $\lim_{K \r \infty} \alpha(K) = 1$, cf.~(\ref{4.16}). Note that by construction $|\cC|$ cannot exceed the total number of columns, and hence, 
$|\cC| \le c (\frac{N}{L})^{d-1} \stackrel{(\ref{5.16})}{\le} \; \frac{c'}{\Gamma} \; \frac{N^{d-2}}{\log N}$ for all $\cC$ in the family.

\medskip
Our next task is to derive an asymptotic lower bound on ${\rm cap}(C)$, as $\cC$ varies over the above family. We clearly lower the capacity of $C$ if, for each $B$ in $\cC$, we replace $B$ by its $(d-1)$-dimensional face closest (and parallel) to the face $F_{e,N}$ of $B_N$ attached to the column where $B$ sits. We denote by $C'$ this new set. If we project each of the $(d-1)$-dimensional faces entering $C'$ on the corresponding face $F_{e,N}$, we recover $\wt{C}$. In doing so, we decrease the Euclidean distance between points sitting in different faces of $C'$, and have the relative position of points within the same face of $C'$ remain unchanged. Moreover, their mutual distance remain at least $\ov{K} L$, if they belong to different faces of $C'$, cf.~(\ref{5.33}) ii) and iii). As a result of the variational characterization (\ref{1.10}) of the capacity and the behavior at infinity of $g(\cdot)$, see (\ref{1.2}), we find that
\begin{equation}\label{5.36}
\begin{array}{l}
\liminf\limits_N \;\inf\limits_{\cC} \;\mbox{\f $\dis\frac{{\rm cap}(C)}{{\rm cap}(\wt{C})}$} \ge \liminf\limits_N \; \inf\limits_{\cC} \; \mbox{\f $\dis\frac{{\rm cap}(C')}{{\rm cap}(\wt{C})}$} \stackrel{(\ref{1.10})}{=} \liminf\limits_N \; \inf\limits_{\cC} \;\mbox{\f $\dis\frac{\inf\limits_{\wt{\nu}} E(\wt{\nu})}{\inf\limits_{\nu '} E(\nu ')}$} 
\\[3ex]
\mbox{(recall $E(\mu) = \dsl_{x,y} \mu(x) \,\mu(y) \,g(x-y)$)},
\end{array}
\end{equation}

\n
where $\wt{\nu}$ and $\nu'$ respectively vary over the set of probabilities supported on $\wt{C}$ and $C'$. We obtain a lower bound in the last expression of (\ref{5.36}) by choosing $\nu'$, depending on $\wt{\nu}$, so that $\wt{\nu}$ is the image of $\nu'$ under the one-to-one ``projection'' transforming $C'$ into $\wt{C}$. From the remarks made above (\ref{5.36}) we see that the last expression of (\ref{5.36}) is at least $1$ and we have shown that
\begin{equation}\label{5.37}
\liminf\limits_N \; \inf\limits_\cC \;\mbox{\f $\dis\frac{{\rm cap}(C)}{{\rm cap}(\wt{C})}$} \ge 1\,.
\end{equation}
We will now derive an asymptotic lower bound on $\inf_\cC {\rm cap}(\wt{C})$.
\begin{lemma}\label{lem5.6}
\begin{equation}\label{5.38}
\liminf\limits_N \; \inf\limits_\cC \;\mbox{\f $\dis\frac{{\rm cap}(\wt{C})}{{\rm cap}(B_N)}$} \ge 1\,.
\end{equation}
\end{lemma}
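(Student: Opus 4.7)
The plan is to combine the sweeping identity with a Wiener-type iteration over dyadic scales. By (\ref{1.12}) applied with $K = \wt{C}$ and $K' = B_N$ (so $\wt{C} \subseteq \partial_i B_N \subseteq B_N$), summing $e_{\wt{C}}(y)$ over $y \in \wt{C}$ would yield
${\rm cap}(\wt{C}) = P_{e_{B_N}}[H_{\wt{C}} < \infty] = {\rm cap}(B_N) - P_{e_{B_N}}[H_{\wt{C}} = \infty]$.
So it will suffice to show $P_{e_{B_N}}[H_{\wt{C}} = \infty] = o({\rm cap}(B_N))$ uniformly over $\cC$, which I would decompose according to whether the starting point lies close to or far from $\wt{C}$, setting $G := \{x \in \partial_i B_N : d(x, \wt{C}) \le \ov{K}L\}$.

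For $x \in \partial_i B_N \setminus G$ (the far contribution), property (\ref{5.33})~iv) bounds $|F_N \setminus G|$ by $c(\ov{K})(N^{d-2}L + \rho N^{d-1})$; combined with a uniform pointwise bound $e_{B_N}(x) \le c/N$ on the face interior $F_N$ (a standard discrete potential-theoretic fact for cubes, obtainable by comparison with the escape probability above an infinite half-space), this yields $e_{B_N}(F_N \setminus G) = o({\rm cap}(B_N))$ thanks to the scalings $L/N \to 0$ and $\rho \to 0$. The remaining edge/corner contribution from $\partial_i B_N \setminus F_N$ is handled via the sweeping inequality $e_{B_N}(A) \le {\rm cap}(A)$ together with a capacity bound on the $\ov{K}L$-tubular neighborhood of the $(d-2)$-skeleton of $B_N$, which is $o({\rm cap}(B_N))$ by standard estimates on the capacity of thin cylinders for $d \ge 3$.

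For $x \in G$ (the close contribution), I would invoke a Wiener-type iteration over $\sim \log(N/L) \sim \log N$ dyadic scales. Since $\wt{C}$ is a union of $(d-1)$-dimensional $L$-tiles at mutual distance $\ov{K}L$ on $\partial_i B_N$, every dyadic annulus of inner radius $r \in [\ov{K}L, N]$ around $x$ contains of order $(r/\ov{K}L)^{d-1}$ tiles occupying a positive fraction of the $(d-1)$-dimensional surface area within the annulus; a direct energy computation should give ${\rm cap}(\wt{C} \cap \text{annulus}) \ge c\, r^{d-2}$, comparable (up to a constant depending on $\ov{K}$) to the capacity of the full annulus. The classical Wiener estimate then provides a universal lower bound on the probability that the walk started at $x$ hits $\wt{C}$ while crossing each such annulus, and iterating over all relevant dyadic scales via the strong Markov property at the successive exit times of $B(x, 2^k)$ yields $P_x[H_{\wt{C}} = \infty] \le (1 - c)^{\log(N/L)} \le N^{-c'}$ for some $c' > 0$. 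Summing, $\sum_{x \in G} e_{B_N}(x) P_x[H_{\wt{C}} = \infty] \le N^{-c'}\,{\rm cap}(B_N) = o({\rm cap}(B_N))$, which combined with the far contribution gives the claim.

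I expect the main technical obstacle to lie in the per-scale Wiener estimate: rigorously verifying ${\rm cap}(\wt{C} \cap \text{annulus}) \gtrsim r^{d-2}$ uniformly in $\cC$ (since the tile spacing $\ov{K}L$ is of the same order as the smallest relevant scale), and orchestrating the iteration to account for the walk's transient ability to make multiple excursions through each annulus before escaping. A secondary, classical difficulty is the uniform bound $e_{B_N}(x) \le c/N$ on $F_N$ and the negligibility of the edge/corner contribution, which require stratifying $\partial B_N$ by codimension and applying standard but somewhat delicate capacity estimates for tubular neighborhoods of $(d-k)$-dimensional faces of the cube.
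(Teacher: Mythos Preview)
Your approach is essentially correct and shares the core idea with the paper --- sweeping identity plus a Wiener-type iteration --- but it takes a detour the paper avoids. The paper does \emph{not} split into close/far starting points; instead it proves directly that
\[
\lim_N \inf_{\cC} \inf_{x \in \partial_i B_N} P_x[H_{\wt{C}} < \infty] = 1,
\]
i.e.\ the hitting probability tends to $1$ uniformly over \emph{all} $x \in \partial_i B_N$. The trick is to start the dyadic iteration not at scale $\ov{K}L$ but at a scale $2^{k_0}$ chosen via (\ref{5.41}) so that $2^{k_0(d-1)}$ dominates the total defect $c(\ov K)(N^{d-2}L + \rho N^{d-1})$ plus the edge region $|F_{e,N}\setminus F_N|$. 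At every scale $k_0 < k \le k_1$ (with $2^{k_1} \le N/2$) the face portion $B(x,2^k)\cap F_{e,N}$ is then automatically covered by a positive fraction of $\wt{C}$-tiles, regardless of where $x$ sits on the face; since $k_1-k_0 \to \infty$, the iteration gives the claim. This buys you freedom from the pointwise bound $e_{B_N}(x)\le c/N$ and from the edge/corner capacity estimate, both of which are doable but add work.

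There is also a genuine overstatement in your per-scale claim. You assert that for $x\in G$, every annulus of radius $r\in[\ov{K}L,N]$ around $x$ contains a positive fraction of $\wt{C}$. But (\ref{5.33})\,iv) is a \emph{global} bound on the missing area; nothing prevents the entire defect from clustering near $x$ (beyond the single nearby tile guaranteed by $x\in G$). So for $r$ between $\ov{K}L$ and $r_0 \sim (N^{d-2}L + \rho N^{d-1})^{1/(d-1)}$ the positive-fraction claim can fail, and with it the bound ${\rm cap}(\wt{C}\cap\text{annulus})\gtrsim r^{d-2}$. The fix is exactly the paper's: start the iteration at $r_0$ rather than $\ov{K}L$. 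Since $\log(N/r_0)\to\infty$, you still get infinitely many good scales and $P_x[H_{\wt{C}}=\infty]\to 0$ --- but now for every $x$, so your close/far decomposition becomes redundant.
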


\begin{proof}
For any $\cC$ in the family entering the infimum, we have $\wt{C} \subseteq F_N \subseteq B_N$, and by the sweeping identity (\ref{1.12}) and (\ref{1.6}) we find that
\begin{equation}\label{5.39}
{\rm cap}(\wt{C}) = P_{e_{B_N}} [H_{\wt{C}} < \infty]\,.
\end{equation}

\n
In addition, $e_{B_N}$ is supported on $\partial_i B_N$ ($= \{x \in \IZ^d$, $|x|_\infty = N\}$). Thus, our claim (\ref{5.38}) will follow once we show that
\begin{equation}\label{5.40}
\lim\limits_N \; \inf\limits_\cC \; \inf\limits_{x \in \partial_i B_N} \;P_x[H_{\wt{C}} < \infty] = 1\,.
\end{equation}

\n
We will prove (\ref{5.40}) by means of a Wiener-type criterion (see for instance \cite{Lawl91}, p.~55), making use of the fact that the sets $\wt{C}$ under consideration are quite sizeable. More precisely, given any $|e| = 1$ and $x \in F_{e,N}$, we consider the successive exit times of simple random walk from $B(x,2^k)$, where $k_0 \le k \le k_1$, with $k_0$ the smallest integer such that
\begin{equation}\label{5.41}
2^{k_0(d-1)} \ge 10\big(c_9(\rho N^{d-1} + N^{d-2} L) + (2d-2) (2N+1)^{d-2}\, \ov{K} L\big)
\end{equation}
and $k_1$ the largest integer such that
\begin{equation}\label{5.42}
2^{k_1} \le \mbox{\f $\dis\frac{N}{2}$}\,.
\end{equation}
By (\ref{5.16}), (\ref{5.18}) we have $\lim_N k_1 - k_0 = \infty$, and $\lim_N \frac{2^{k_0}}{L} = \infty$, by looking at the last expression of (\ref{5.41}). As we now explain, for large $N$, $x \in F_{e,N}$, and $k_0 < k \le k_1$, $\wt{C} \cap B(x,2^k) \cap F_{e,N}$ covers a non-degenerate fraction of $B(x,2^k) \cap F_{e,N}$. Indeed, by (\ref{5.33}) iii) and iv), $\wt{C} \cap F_{e,N}$ consists of disjoint $(d-1)$-dimensional boxes of side-length $L$, contained in $F_N \cap F_{e,N}$, and at most $c_9(\rho N^{d-1} + N^{d-2} L)$ points of $F_N \cap F_{e,N}$ are at distance bigger than $\ov{K} L$ from $\wt{C} \cap F_{e,N}$. We thus find that for large $N$, the number of points of $B(x,2^{k-1}) \cap F_N \cap F_{e,N}$ at distance at most $\ov{K} L$ from $\wt{C}$ is at least
\begin{equation*}
|B(x,2^{k-1}) \cap F_{e,N}| - | F_{e,N} \backslash F_N | - c_9(\rho N^{d-1} + N^{d-2} L) \stackrel{(\ref{5.41}), (\ref{5.42})}{\ge} \mbox{\f $\dis\frac{9}{10}$} \;|B(x,2^{k-1}) \cap F_{e,N}|.
\end{equation*}
Therefore, for large $N$, any $x \in F_{e,N}$, and $k_0 < k \le k_1$, the number of points of $B(x,2^k) \cap F_N \cap F_{e,N}$ at distance at most $\ov{K} L$ from boxes of $\wt{C}$ contained in $B(x,2^k) \cap F_N \cap F_{e,N}$ is at least $c\,|B(x,2^k) \cap F_{e,N}|$, and as a result $|\wt{C} \cap B(x,2^k) \cap F_{e,N}| \ge c(\ov{K}) |B(x,2^k) \cap F_{e,N}|$.

\medskip
The variational characterization (\ref{1.10}) of the capacity (choosing $\nu$ the normalized counting measure on $\wt{C} \cap B(x,2^k) \cap F_{e,N})$ then shows that for large $N$, for any $x \in F_{e,N}$, and any $k_0 < k \le k_1$, ${\rm cap}(\wt{C} \cap B(x,2^k) \cap F_{e,N}) \ge c(\ov{K}) 2^{k(d-2)}$. With such a capacity estimate, it follows that the walk starting on $\partial B(x,2^k)$ has a probability uniformly bounded from below of entering $\wt{C} \cap B(x,2^k) \cap F_{e,N}$ before exiting $B(x,2^{k+1})$, for $k_0 < k \le k_1$. Since $\lim_N k_1 - k_0 = \infty$, the application of the strong Markov property at the successive exit times of $B(x,2^k)$, with $k_0 < k \le k_1$, readily implies (\ref{5.40}) (this is the Wiener-type criterion alluded to above). This completes the proof of Lemma \ref{lem5.6}.
\end{proof}

Hence, by (\ref{5.37}) and the above lemma, we conclude that
\begin{equation}\label{5.43}
\liminf\limits_N \; \inf\limits_\cC \; \mbox{\f $\dis\frac{{\rm cap}(C)}{{\rm cap}(B_N)}$} \ge 1.
\end{equation}

\medskip\n
By the lower bound on ${\rm cap}(B_N)$ in (\ref{1.7}), and the upper bound on $|\cC|$ below (\ref{5.35}), we see that $\lim_N \sup_\cC \frac{|\cC|}{{\rm cap}(C)} = 0$. Thus, coming back to the last line of (\ref{5.35}) we find that
\begin{equation}\label{5.44}
\begin{array}{l}
\limsup\limits_N \;\mbox{\f $\dis\frac{1}{N^{d-2}}$} \;\log \IP[D_N] \le 
\\
\mbox{\f $\dis\frac{c_8}{\Gamma}$}  - \liminf\limits_N \;\fr \;a^2 \; \mbox{\f $\dis\frac{{\rm cap}(B_N)}{\alpha(K) N^{d-2}}$} \stackrel{(\ref{3.14})}{=} \mbox{\f $\dis\frac{c_8}{\Gamma}$}  - \mbox{\f $\dis\frac{1}{2d}$} \;\mbox{\f $\dis\frac{a^2}{\alpha(K)}$}  \;{\rm cap}_{\IR^d} ([-1,1]^d).
\end{array}
\end{equation}

\medskip\n
We can now let $K$ tend to infinity, $\Gamma$ tend to infinity, and $a$ tend to $\ov{h} - \alpha$ (see (\ref{5.17})), and we obtain (\ref{5.31}).

\medskip
We will now deduce (\ref{5.28}) from (\ref{5.31}), and conclude the proof of Theorem \ref{theo5.5}. We introduce $N' = [\frac{N}{1 + \kappa}]$ (with $\kappa$ as above (\ref{5.30})) and $M' = 2(1 + \kappa) M$. We denote by $A'_N$ the event in (\ref{5.30}), where $N$ is replaced by $N'$ and $M$ by $M'$ (note that $M' \ge 2$ fulfills (\ref{5.29})). Then, for large $N$, one has $(1 + \kappa) N' \le N$ and $M' N' \ge MN$ so that $A_N \subseteq A'_N$. By this inclusion and (\ref{5.31}) we find that
\begin{equation}\label{5.45}
\begin{split}
\limsup\limits_N \;\mbox{\f $\dis\frac{1}{N^{d-2}}$} \;\log \IP[A_N] \le &   \; \limsup\limits_N \;\Big(\mbox{\f $\dis\frac{N'}{N}$}\Big)^{d-2} \;\mbox{\f $\dis\frac{1}{N'^{(d-2)}}$} \;\log \IP[A'_N]
\\
\le & -\mbox{\f $\dis\frac{1}{(1 + \kappa)^{d-2}}$}  \;  \mbox{\f $\dis\frac{(\ov{h} - \alpha)^2}{2d}$}   \;{\rm cap}_{\IR^d} ([-1,1]^d).
\end{split}
\end{equation}

\n
Letting $\kappa$ tend to zero, we obtain (\ref{5.28}). This concludes the proof of Theorem \ref{theo5.5}.
\end{proof}

\begin{remark}\label{rem5.7} \rm  Of course, if the equality $\ov{h}  = h_{**}$ holds, then the asymptotic upper bound from Theorem \ref{theo5.5} actually matches the asymptotic lower bound from Theorem \ref{theo2.1}. However, in the present state of knowledge Theorem \ref{theo5.5} suffers from the defect that very little is known about $\ov{h}$ (not even that $\ov{h} \ge 0$). In the next section we will present a version of Theorem \ref{theo5.5} in the case of high dimension, which aims at correcting this defect.  

\hfill $\square$
\end{remark}

\bigskip\bigskip
\section{A disconnection upper bound in high dimension}
\setcounter{equation}{0}

In this section, we derive an asymptotic upper bound on the  disconnection event $A_N$, see (\ref{0.6}) or (\ref{2.1}), which holds in sufficiently high dimension. In this regime, the upper bound we obtain in this section improves on the upper bound of Theorem \ref{theo3.2} (based on a contour argument). We introduce a substitute for the strongly percolative regime corresponding to (\ref{5.1})-(\ref{5.3}), see (\ref{6.1}), (\ref{6.2}) below. Our main results are Theorem \ref{theo6.1} that states conditions under which we can ensure that (\ref{6.1}) and (\ref{6.2}) hold, and Theorem \ref{theo6.2} that contains the main upper bound. The proof of Theorem \ref{theo6.1} uses methods of Section 3 of \cite{RodrSzni13} (where it is shown that $h_* > 0$ in high dimension), and the static renormalization for Bernoulli percolation, see chapter 7 \S 4 of \cite{Grim99}. The proof of Theorem \ref{theo6.2} is, in essence, an adaptation of the proof of the main Theorem \ref{theo5.5} in the last section.

\medskip
We will now introduce the conditions replacing (\ref{5.1}) and (\ref{5.2}). We first need some notation. For each unit vector $e$ in $\IZ^d$ (i.e. $|e| = 1$), we choose mutually orthogonal unit vectors $\wt{e}_i$,  $1 \le i \le 3$, in $\IZ^d$ such that $\wt{e}_1 = e$. We write $H_e$ for the three-dimensional subspace $\IZ\, \wt{e}_1 + \IZ \,\wt{e}_2 + \IZ \,\wt{e}_3$ of $\IZ^d$, and $S_e$ for the two-dimensional slab consisting of points in $H_e$ with $\wt{e}_3$-coordinate in $[0,2 L_0)$.

\medskip
We say that $\alpha > \beta$ and $L_0 \ge 100$ are admissible when for some $e$ (and hence for all $|e| = 1$) one has in the notation of (\ref{4.3}), (\ref{4.4})
\begin{align}
\lim\limits_L  \mbox{\f $\dis\frac{1}{\log L}$}  \log \IP\big[&B_0 \cap S_e \cap E^{\ge \alpha} \;\mbox{has no component of diameter at least}\; \textstyle{\frac{L}{10}}\big] = -\infty,\label{6.1}
\\[1ex]
\lim\limits_L  \mbox{\f $\dis\frac{1}{\log L}$}  \log \IP\big[&\mbox{there exist components of $B_0 \cap S_e \cap E^{\ge \alpha}$ and $B_e \cap S_e \cap E^{\ge \alpha}$}\label{6.2}
\\[-1ex]
&\mbox{of diameter at least $\frac{L}{10}$, which are not connected in} \nonumber
\\
& D \cap S_e \cap E^{\ge \beta}\big] = -\infty. \nonumber
\end{align}

\medskip\n
Loosely speaking, compared to the previous section, in this section the percolation of the excursion-set of the Gaussian free field along $L$-boxes in a column with direction $e$ will be produced along sufficiently thick two-dimensional slabs, which are translates of $S_e$. The point of working with the restriction to $\IZ^3$ of the Gaussian free field on $\IZ^d$, is that when $d$ is large, this restriction is a small perturbation (in a suitable sense) of a collection of i.i.d. Gaussian variables, see Section 3 of \cite{RodrSzni13} and (\ref{6.15}) below. Here are the two main results of this section.

\begin{theorem}\label{theo6.1} There exist $\wt{h}_0 > 0$, $L_0 \ge 100$ and a non-increasing integer-valued function $\wt{d}_0$ on $(0,\infty)$ such that
\begin{equation}\label{6.3}
\mbox{when $\wt{h}_0 > \alpha > \beta$ and $d \ge \wt{d}_0 (\alpha - \beta)$, then $\alpha,\beta$ and $L_0$ are admissible}.
\end{equation}
\end{theorem}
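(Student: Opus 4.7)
The plan is to combine the perturbative construction of Section~3 of \cite{RodrSzni13} (used there to prove $h_*>0$ in high dimension) with a static renormalization scheme for Bernoulli site percolation in a thick two-dimensional slab, as in Chapter~7~\S4 of \cite{Grim99}. By the isometry invariance of the law of $\varphi$, it suffices to fix a single direction $e$ and establish (\ref{6.1}), (\ref{6.2}) for it. The starting point is that in high dimension the restriction of $\varphi$ to the three-dimensional sublattice $H_e$ is close to white noise: by (\ref{1.2}) and $g(0) \to 1$, one has $\varepsilon(d) := \sup_{x \in H_e} \sum_{y \in H_e \setminus \{x\}} g(x-y) \to 0$ as $d \to \infty$, so that the kernel $(g(x-y))_{x,y \in H_e} - g(0)\,I$ has operator norm at most $\varepsilon(d)$ on $\ell^2(H_e)$. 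Setting $\kappa(d) = 2\varepsilon(d)$, one can realise $\varphi|_{H_e} = \varphi^{(1)} + \varphi^{(2)}$ as a sum of two independent centered Gaussian fields on $H_e$, with $\varphi^{(1)}$ i.i.d.\ of variance $g(0) - \kappa(d)$ and $\varphi^{(2)}$ of pointwise variance $\kappa(d) \to 0$.

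Next, choose $L_0 \ge 100$ such that the critical site-percolation parameter $p^{\rm site}_c$ of the slab $\IZ^2 \times \{0,\dots,2L_0-1\}$ satisfies $p^{\rm site}_c < \tfrac{1}{2} - 3\eta$ for some $\eta > 0$, which is possible because the slab thresholds decrease to $p^{\rm site}_c(\IZ^3) < \tfrac{1}{2}$. Pick $\wt{h}_0 > 0$ small enough that, for all $d$ sufficiently large, $\IP[\varphi^{(1)}_0 \ge \wt{h}_0] > p^{\rm site}_c + 2\eta$. Given admissible $\wt{h}_0 > \alpha > \beta$, set the buffer $r = \tfrac{1}{6}(\alpha - \beta) \wedge \tfrac{1}{2}(\wt{h}_0 - \alpha)$. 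The function $\wt{d}_0(\alpha - \beta)$ will be chosen so that, for $d \ge \wt{d}_0(\alpha - \beta)$, one has $\kappa(d) \le r^2$ and $\IP[\varphi^{(1)}_0 \ge \alpha + r] > p^{\rm site}_c + \eta$. Fixing a scale $L_1 = L_1(\alpha-\beta) \ge L_0$, declare an $L_1$-box $B \subset S_e$ \emph{good} when $B \cap \{\varphi^{(1)} \ge \alpha + r\}$ contains a unique macroscopic cluster crossing every face of $B$, and $\sup_B |\varphi^{(2)}| \le r$. Classical supercritical slab-percolation estimates give stretched-exponential decay in $L_1$ for the failure of the first event (Theorem~7.61 of \cite{Grim99}), while Borell-TIS applied to $\varphi^{(2)}$ over the bounded set $B$ bounds the failure probability of the second event by $\exp(-c\,r^2/\kappa(d))$, hence arbitrarily small for $d$ large. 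Thus each $L_1$-box is good with probability arbitrarily close to $1$.

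The principal technical difficulty is to upgrade this pointwise smallness into super-polynomial decay at scale $L$, despite the long-range correlations of $\varphi^{(2)}$. Following the strategy of \cite{RodrSzni13}, one first applies the Markov decomposition (\ref{1.17})-(\ref{1.18}) to $\varphi$ at scale $KL_1$ for a large integer $K$, writing $\varphi = h + \psi$, and notes that $\psi$ restricted to distinct $KL_1$-separated boxes is independent (cf.~Lemma~\ref{lem4.1}) while $h$ has uniformly small amplitude on sub-boxes, with stretched-exponential tail in $K$ by Corollary~\ref{cor4.5}. Repeating the perturbative construction of the first paragraph on $\psi$ inside each concentric $KL_1$-box -- whose covariance $g_U$ in the bulk mimics $g$ up to a discrepancy absorbed into the $r$-buffer by taking $K$ large -- produces local analogues $\psi^{(1)}$, $\psi^{(2)}$ with the same statistics as $\varphi^{(1)}$, $\varphi^{(2)}$ but independent across distinct $KL_1$-boxes. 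After this finite-range reduction, the Liggett-Schonmann-Stacey theorem (Theorem~7.65 of \cite{Grim99}) dominates the good-box indicators from below by an i.i.d.\ Bernoulli site field of arbitrarily high parameter. Standard estimates in the highly supercritical phase then yield stretched-exponential decay in $L/L_1$ for the absence of macroscopic clusters in $S_e \cap E^{\ge \alpha}$, proving (\ref{6.1}). Property (\ref{6.2}) is obtained by the same scheme at the slightly lower threshold $\alpha - 2r$ (still exceeding the Bernoulli percolation threshold for $d$ large), so that macroscopic clusters in $B_0 \cap S_e \cap E^{\ge \alpha}$ and $B_e \cap S_e \cap E^{\ge \alpha}$ can be bridged inside $D \cap S_e \cap E^{\ge \beta}$. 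Defining $\wt{d}_0(\alpha-\beta)$ as the maximum of the dimension thresholds produced along the way yields (\ref{6.3}).
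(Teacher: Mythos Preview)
Your approach diverges from the paper's at the key technical point, and the divergence creates a genuine gap.

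The paper does \emph{not} rely only on the pointwise smallness of the perturbation field. It imports from Lemma~3.2 of \cite{RodrSzni13} the decomposition $\varphi_x=\eta_x+\xi_x$ on $H_e$ together with the multiplicative estimate
\[
\IP\Big[\bigcap_{x\in A}\{|\xi_x|>h\}\Big]\le \big[v(h^2 d)\big]^{|A|},\qquad A\subset\subset H_e,
\]
see (\ref{6.14}). This bound is exactly what makes the Peierls argument (Lemma~\ref{lem6.3}) go through for the $\xi$-bad boxes: along a $*$-path of length $n$ in $\cL$, the probability of simultaneous $\xi$-badness factorises, so one gets exponential decay $2^{-n}$ directly, with $n\sim L/L_0$. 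Your field $\varphi^{(2)}$, with covariance $g|_{H_e}-(g(0)-\kappa(d))I$, has strictly positive off-diagonal entries and does \emph{not} satisfy such a multiplicative bound; pointwise variance $\kappa(d)\to 0$ is not enough.

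Your proposed remedy --- a Markov decomposition $\varphi=h+\psi$ at scale $KL_1$, then redoing the i.i.d.\ splitting on $\psi$ --- is not the strategy of \cite{RodrSzni13} (despite your attribution), and the outline does not close. The local fields $\psi_B$ are indeed independent across well-separated $KL_1$-boxes, so the $\psi$-good indicators can be handled by LSS. But the harmonic parts $h_B$ are long-range correlated, and you still need to control the event that $\gtrsim L/L_1$ boxes along a $*$-circuit are $h$-bad. Corollary~\ref{cor4.5} only gives a tail bound for a \emph{single} box at the \emph{fixed} scale $KL_1$; it says nothing about the joint behaviour across many boxes, and in particular does not yield decay in $L$. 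One could try to adapt Corollary~\ref{cor4.4}, but that would require a capacity lower bound for the union of the selected $h$-bad boxes along an arbitrary $*$-circuit in the slab, at fixed box-scale --- an argument you have not supplied and which is not immediate.

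In short: the paper sidesteps the whole issue by using the ready-made decoupling (\ref{6.14}), after which a single-scale Peierls argument (Lemma~\ref{lem6.3}) suffices. Your route trades that estimate for an extra layer of decomposition whose correlated piece you have not controlled. Either establish an analogue of (\ref{6.14}) for your $\varphi^{(2)}$, or give a genuine argument for the $h$-bad boxes along long circuits.
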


Actually, we will show more than a super-polynomial decay in $L$ for (\ref{6.1}), (\ref{6.2}) in the proof of Theorem \ref{theo6.1} (namely an exponential decay in $L$), see Remark \ref{rem6.4}.

\begin{theorem}\label{theo6.2} (see (\ref{0.6}) or (\ref{2.1}) for notation)

\medskip
There exists $h_0 > 0$, $d_0 \ge 3$ such that when $d \ge d_0$, for $\alpha < h_0$
\begin{equation}\label{6.4}
\limsup\limits_N \; \mbox{\f $\dis\frac{1}{N^{d-2}}$} \; \log \IP[A_N] \le -  \mbox{\f $\dis\frac{1}{2d}$} \;(h_0 - \alpha)^2 \,{\rm cap}_{\IR^d}([-1,1]^d).
\end{equation}
\end{theorem}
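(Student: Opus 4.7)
The plan is to adapt, almost verbatim, the proof of the main Theorem \ref{theo5.5}, replacing the isotropic strong percolation condition (\ref{5.1})--(\ref{5.2}) (behind $\ov{h}$) by the slab-based admissibility conditions (\ref{6.1})--(\ref{6.2}) supplied by Theorem \ref{theo6.1}. Concretely, I would set $h_0 = \wt{h}_0$ from Theorem \ref{theo6.1}, pick $\alpha < h_0$, and choose parameters $\gamma > \delta$ and $a>0$ with $\alpha + a = \delta < \gamma < h_0$, mirroring (\ref{5.17}). Then I would take $d_0 = \wt{d}_0(\gamma-\delta)$, so that for $d \ge d_0$ the pair $(\gamma,\delta)$ is admissible with the slab thickness $L_0$ of Theorem \ref{theo6.1}. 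The scale $L$ is chosen as in (\ref{5.16}), and the decomposition $\varphi = h_B + \psi_B$ is as in Section 4.

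The key modification is the notion of $\psi$-good box. Given a column of $L$-boxes in a direction $e$, I would call a box $B$ in the column $\psi$-good at levels $\gamma,\delta$ if, inside a translate of the slab $S_e$ properly placed through $B$ (and through its neighbor $B'$ along $e$), the conditions (\ref{5.7})--(\ref{5.8}) hold with all intersections taken with that slab. The analog of Proposition \ref{prop5.2} then follows by exactly the same argument: combine (\ref{6.1})--(\ref{6.2}) (applied at an intermediate level $\alpha'$ with $\wt{h}_0 > \alpha' > \gamma$ and at $\tfrac{\gamma+\delta}{2}, \tfrac{\gamma+3\delta}{4}$) with the super-polynomial tail estimate on $\sup_D |\varphi - \psi_B| = \sup_D |h_B|$ from Corollary \ref{cor4.5}. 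The independence statement (\ref{5.13}) of Lemma \ref{lem5.3} and the path construction (\ref{5.14}) carry over without change, since slabs are simply subsets of the boxes $D_i$ and the percolation within a slab still yields a path in $E^{\ge \delta - a} = E^{\ge \alpha}$ along the column once all boxes in the column are $\psi$-good and $h$-good.

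With these ingredients in place, Proposition \ref{prop5.4} (few columns contain a $\psi$-bad box, up to super-exponentially small probability in $N^{d-2}$) goes through verbatim, since its proof used only (\ref{5.13}) and the super-polynomial decay of $\IP[B_0 \text{ is } \psi\text{-bad}]$ in $L$, plus the Bernoulli large deviation bookkeeping (\ref{5.22})--(\ref{5.27}). The rest of the proof then repeats the coarse-graining of Theorem \ref{theo5.5}: on $\wh{A}_N \setminus C_N$, select one $h$-bad $L$-box in each column away from a negligible family, enforce mutual $|\cdot|_\infty$-separation at least $\ov{K}L$ via the sub-lattice trick, apply Corollary \ref{cor4.4} uniformly to the coarse-grained family, and derive the capacity lower bound ${\rm cap}(C) \ge (1+o(1)) \,{\rm cap}(B_N)$ using Lemma \ref{lem5.6} (projection to faces plus the Wiener-type criterion), which is unchanged. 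Letting $K,\Gamma \to \infty$ and $a \uparrow h_0 - \alpha$ yields (\ref{6.4}), after the same reduction from $\wh{A}_N$ to $A_N$ via rescaling (the step (\ref{5.45})).

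The main obstacle, and the only genuinely new input, is the direction-dependence of the slab: for columns starting on different faces $F_{e,N}$ of $B_N$, the notion of $\psi$-good uses a different slab $S_e$. This is harmless because (i) there are only $2d$ face directions, so a union bound does not affect the super-polynomial rate in (\ref{5.10}); (ii) one can attach to each column a fixed choice of slab depending only on its face direction, so the corresponding $\psi$-good events for well-separated boxes within the same face retain the independence structure of Lemma \ref{lem5.3} (as the relevant $\psi_{B'}$-fields depend on disjoint neighborhoods $\wt{B}_{z'}$); (iii) the path construction in (\ref{5.14}) is applied column by column, each with a single slab direction. Once this direction-dependence is handled cleanly, every other step is a transcription of Section 5.
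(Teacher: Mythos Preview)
Your overall strategy mirrors the paper's, but there is a genuine gap in your parameter choices that prevents you from obtaining the theorem as stated. You set $h_0 = \wt{h}_0$, pick $\alpha < h_0$, then choose $\gamma > \delta$ with $\alpha + a = \delta < \gamma < h_0$ and define $d_0 = \wt{d}_0(\gamma-\delta)$. At the end you want to let $a \uparrow h_0 - \alpha$, i.e.\ $\delta \uparrow h_0 = \wt{h}_0$. But since $\delta < \gamma < \wt{h}_0$, this forces $\gamma - \delta \to 0$, and with it the dimension threshold $\wt{d}_0(\gamma-\delta)$ (and the even larger $\wt{d}_0$ of the smaller gaps you actually need, such as $\tfrac{\gamma-\delta}{4}$ for the application at levels $\tfrac{\gamma+\delta}{2},\tfrac{\gamma+3\delta}{4}$) tends to infinity. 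Hence your $d_0$ depends on $\alpha$ through the choice of $\gamma,\delta$, and no single $d_0$ works for all $\alpha < h_0$; the theorem as stated is not proved.

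The paper avoids this by sacrificing part of $\wt{h}_0$: it sets $h_0 = \wt{h}_0/2$ and chooses $\gamma,\delta$ \emph{fixed}, namely $\delta = h_0$ and $\gamma = \tfrac{\wt{h}_0+3h_0}{4}$, so that all level gaps used in the analogue of Proposition \ref{prop5.2} are bounded below by a fixed multiple of $\wt{h}_0$. This yields a fixed $d_0 = \wt{d}_0(\wt{h}_0/32)$, and since $\delta = h_0$ one has $a = \delta - \alpha = h_0 - \alpha$ exactly, with no limiting procedure in $a$ needed. Apart from this parameter issue, your handling of the direction-dependent slab notion of $\psi$-good and the transcription of Proposition \ref{prop5.4}, Corollary \ref{cor4.4}, and Lemma \ref{lem5.6} is in line with the paper's argument.
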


\n
We will first admit Theorem \ref{theo6.1} and prove Theorem \ref{theo6.2}.

\bigskip\n
{\it Proof of Theorem \ref{theo6.2} (conditionally on Theorem \ref{theo6.1})}: The proof is similar to the proof of Theorem \ref{theo5.5}. We will highlight the main changes. We choose $L_0$, $\wt{h}_0$ from Theorem \ref{theo6.1} and set
\begin{equation}\label{6.5}
h_0 = \mbox{\f $\dis\frac{\wt{h}_0}{2}$} \,.
\end{equation}
We pick $\gamma, \delta$ (in place of (\ref{5.17})) via
\begin{equation}\label{6.6}
\delta = h_0 < \gamma = \mbox{\f $\dis\frac{\wt{h}_0 + 3h_0}{4}$} \,.
\end{equation}
In place of (\ref{5.7}), (\ref{5.8}), given $L \ge 1$, an $L$-box $B = B_z$, with $z \in \IL$ ($= L \IZ^d$), and $|e| = 1$, we say that $B$ is $\psi$-good in the direction $e$ when (with $\gamma, \delta$ as in (\ref{6.6}))
\begin{equation}\label{6.7}
B \cap (z + S_e) \cap \{\psi_B \ge \gamma\} \;\mbox{contains a component of diameter at least $\frac{L}{10}$} 
\end{equation}
and for $B' = B_{z'}$, where $z' = z + Le$ (so that $z + S_e = z' + S_e$)  
\begin{align}
&\mbox{any two components of $B \cap (z + S_e) \cap \{\psi_B \ge \gamma\}$ and $B' \cap (z' + S_e) \cap \{\psi_{B'} \ge \gamma\}$} \label{6.8}
\\
&\mbox{with diameter at least $\frac{L}{10}$ are connected in $D \cap (z + S_e) \cap \{\psi_B \ge \delta\}$}. \nonumber
\end{align}

\n
We can proceed as in the proof of Proposition \ref{prop5.2}, where we choose $\alpha = \frac{3\wt{h}_0  + h_0}{4}$, $\beta = \frac{\wt{h}_0 + h_0}{2}$ and keep $\gamma,\delta$ as in (\ref{6.6}) (so that $\wt{h}_0 > \alpha > \beta > \gamma > \delta = h_0$), and find with the help of Theorem \ref{theo6.1} that for $d \ge \wt{d}_0 (\frac{\wt{h}_0 - h_0}{16}) \stackrel{(\ref{6.5})}{=} \wt{d}_0 (\frac{\wt{h}_0}{32})$, for any unit vector $e$ in $\IZ^d$,
\begin{equation}\label{6.9}
\lim\limits_L \; \mbox{\f $\dis\frac{1}{\log L}$} \;\log \IP[B_0 \;\mbox{is $\psi$-bad in the direction $e$}] = -\infty.
\end{equation}
We thus set
\begin{equation}\label{6.10}
d_0 = \wt{d}_0 \Big(\mbox{\f $\dis\frac{\wt{h}_0}{32}$}\Big)\,,
\end{equation}

\medskip\n
so that (\ref{6.9}) holds when $d \ge d_0$, and assume from now on that $d \ge d_0$.

\medskip
With $\alpha < h_0$ from the statement of Theorem \ref{theo6.2}, we choose $a$ similarly to (\ref{5.18}) via
\begin{equation}\label{6.11}
\alpha + a = \delta \; \big( \stackrel{(\ref{6.6})}{=} h_0\big),
\end{equation}
and define the notion of $h$-good (at level $a$) as in (\ref{5.9}).

\medskip
We have similar independence properties as in (\ref{5.13}). However, when $B^i = z + i \,L e + B_0$, $0 \le i \le n$, with $z \in \IL$ and $|e| = 1$, is a sequence of $L$-boxes $\psi$-good in the direction $e$, and $h$-good, then, as in (\ref{5.14}) (note that $\delta - a = \alpha$, by (\ref{6.11}))
\begin{equation}\label{6.12}
\begin{array}{l}
\mbox{there exists a path in $E^{\ge \delta - a} \cap \big(\bigcup\limits^n_{i = 0} D^i\big) \cap (z + S_e)$ starting in}
\\
\mbox{$B^0 \cap (z + S_e)$ and ending in $B^n \cap (z + S_e)$}
\end{array}
\end{equation}
(as in (\ref{5.14}) $D^i$ stands for the $D$-type box attached to $B^i$, see (\ref{4.3}), (\ref{4.4})).

\medskip
One picks $L$ as in (\ref{5.16}) and $\eta, \rho$ as in (\ref{5.18}), adding ``in the direction $e$'' in the description of the event under the probability, when defining $\eta$ (the specific choice of $e$ does not change the probability). One modifies the definition of $C_N$ in (\ref{5.19}) by considering for each column the face $F_{e,N}$ to which it is attached, and requiring that it contains a box, which is $\psi$-bad in the direction $e$. Analogously to (\ref{5.20}), the event $\wh{C}_N$ defined in this fashion satisfies
\begin{equation}\label{6.13}
\lim\limits_N \, \mbox{\f $\dis\frac{1}{N^{d-2}}$} \; \log \IP[\wh{C}_N] = -\infty\,.
\end{equation}
The proof of Theorem \ref{theo6.2} then proceeds along the same lines as the proof of Theorem \ref{theo5.5}. \hfill $\square$

\medskip
There now remains to prove Theorem \ref{theo6.1}.

\bigskip\n
{\it Proof of Theorem \ref{theo6.1}}: We assume throughout that $d \ge 6$, and without loss of generality, by symmetry, that $\wt{e}_i = e_i$, $i = 1,2,3$ (with $e_i$, $1 \le i \le d$, the canonical basis of $\IR^d$), so that $e = e_1$ and $H_e = \IZ e_1 + \IZ e_2 + \IZ e _3$ ($\subseteq \IZ^d$).

\medskip
Following \cite{RodrSzni13}, see (3.14) and Lemma 3.2 there, we enlarge the probability space so that $(\eta_x)_{x \in H_e}$ and $(\xi_x)_{x \in H_e}$ are independent centered Gaussian fields, with $\eta_x$, $x \in H_e$, iid centered Gaussian variables with variance $\sigma^2(d)$, where $\frac{1}{2} \le \sigma^2(d) < 1$, and $\lim_{d \r \infty} \sigma^2(d) = 1$, and
\begin{equation}\label{6.14}
\IP\big[\bigcap\limits_{x \in A} \{|\xi_x| > h\}\big] \le [v(h^2d)]^{|A|}, \;\mbox{for all $A \subset \subset H_e$},
\end{equation}
with $v$: $\IR_+ \r [0,1]$ a fixed non-increasing function satisfying $\lim_{u \r \infty} v(u) = 0$, and one has
\begin{equation}\label{6.15}
\varphi_x = \eta_x + \xi_x, \; \mbox{for all $x \in H_e$}
\end{equation}

\n
(so, when $d$ is large, we can view the restriction of $\varphi$ to $H_e$ as a ``small perturbation'' of the iid random field $\eta$).

\medskip
We will now choose $\wt{h}_0$. One knows from Theorem 4.1 of \cite{CampRuss85} that $p_c^{{\rm site}}(\IZ^3)$, the critical parameter for site percolation on $\IZ^3$, is smaller than $\frac{1}{2}$, and since $\sigma^2(d) \ge \frac{1}{2}$, we can pick $\wt{h}_0 > 0$ so that for all $d \ge 6$,
\begin{equation}\label{6.16}
P\Big[X \ge 2 \mbox{\f $\dis\frac{\wt{h}_0}{\sigma(d)}$}\Big] \ge \fr \;\Big( \fr + p_c^{\rm site}(\IZ^3)\Big) \;\mbox{with $X$ a standard normal variable}.
\end{equation}

\n
This condition will enter the construction below, which involves a static renormalization procedure attached to Bernoulli percolation.

\medskip
Given $L_0 \ge 100$, we introduce the two-dimensional lattice
\begin{equation}\label{6.17}
\cL = L_0 \,\IZ e_1 + L_0\, \IZ e_2 \subseteq S_e \subseteq H_e\,.
\end{equation}
We consider the values $\alpha,\beta$ from the statement of Theorem \ref{theo6.1}, such that 
\begin{equation}\label{6.18}
\wt{h}_0 > \alpha > \beta \,.
\end{equation}

\n
We now define the random field $(\o_x)_{x \in H_e} = (1\{\eta_x \ge 2 \wt{h}_0\})_{x \in H_e}$, which solely depends on $\eta$. Given $y \in \cL$, we denote by $\cC_y(\o)$ the (possibly empty) open (i.e. value $1$) cluster in $B_{y,e} = y + [0,2L_0)e_1 + [0,2 L_0)e_2 + [0,2L_0)e_3$ containing the most vertices. If several such clusters exist, we choose one by lexicographic order. We denote by $F_y$ (an event, which solely depends on $\o$, and hence on $\eta$) the event
\begin{equation}\label{6.19}
\left\{ \begin{array}{rl}
{\rm i)} & \mbox{$\cC_y(\o)$ is a crossing cluster for $B_{y,e}$ in the first two axes directions:}
\\
& \mbox{there is an open path in $\cC_y(\o)$ with end vertices having components}
\\
&\mbox{$y \cdot e_1$ and $(y + 2 L_0 - 1)e_1$ in the $e_1$direction, and similarly an open}
\\
&\mbox{path with component $y\cdot e_2$ and $(y + 2L_0 - 1)e_2$ in the $e_2$-direction}.
\\[2ex]
{\rm ii)}& \mbox{$\cC_y(\o)$ is the only open cluster $\cC$ of $B_{y,e}$ having diameter $\ge \frac{L_0}{10}$}.
\end{array}\right.
\end{equation}

\medskip\n
We also consider $\o' = (\o'_x)_{x \in H_e}$, where $\o'_x = 1\{\eta_x \ge \frac{\alpha + \beta}{2}\}$, for $x \in H_e$, and define just as above, with $\o'$ in place of $\o$, the events $F'_y$, $y \in \cL$. Further we also define the events $G_y,y \in \cL$, in terms of $\xi$ by
\begin{equation}\label{6.20}
G_y = \Big\{|\xi_x| \le \fr \;\big((\alpha - \beta) \wedge \wt{h}_0\big), \; \mbox{for all $x \in B_{y,e}\Big\}$ (see below (\ref{6.18}) for notation)}.
\end{equation}

\n
We say that $y \in \cL$ is $\o$-good when $F_y$ holds, $\o'$-good when $F'_y$ holds, and $\xi$-good when $G_y$ holds. Note that when
\begin{equation}\label{6.21}
\begin{array}{l}
\mbox{$y_i, 0 \le i \le M$ is a path in $\cL$ (i.e. $|y_i - y_{i-1}| = L_0$, for $1 \le i \le M$) such that}
\\
\mbox{each $y_i$ is both $\o$ and $\xi$-good, then there exists a path in $E^{\ge \wt{h}_0} \subseteq E^{\ge \alpha}$}
\\
\mbox{starting in $B_{y_0,e}$ and ending in $B_{y_M,e}$ contained in $\bigcup^M_{i=0} B_{y_i,e}$ ($\subseteq S_e$)}.
\end{array}
\end{equation}
The proof is similar to Lemma 3.4 of \cite{RodrSzni13}.

\medskip
We observe that $(F_y)_{y \in \cL}$ is a $2$-dependent collection, and the same holds for $(F'_y)_{y \in \cL}$. In addition, $(\o_x)_{x \in H_e}$ is an iid collection of Bernoulli variables with success probability
\begin{equation}\label{6.22}
\IP[\o_x = 1] = \IP[\eta_x \ge 2 \wt{h}_0] \stackrel{(\ref{6.16})}{\ge} \fr \;\Big(\fr + p_c^{\rm site}(\IZ^3)\Big)> p_c^{\rm site} (\IZ^3), \;\mbox{for all $x \in H_e$}.
\end{equation}
and similarly $(\o'_x)_{x \in H_e}$ is an iid collection of Bernoulli variables with success probability
\begin{equation}\label{6.23}
\IP[\o'_x = 1] = \IP\Big[\eta_x \ge \mbox{\f $\dis\frac{\alpha + \beta}{2}$}\Big] \stackrel{(\ref{6.18})}{\ge} \IP[\o_x = 1] \ge \fr \;\Big(\fr + p_c^{\rm site}(\IZ^3)\Big), \;\mbox{for all $x \in H_e$}.
\end{equation}

\medskip
By the site-percolation version of Theorem 7.61, p.~178 of \cite{Grim99}, we know that
\begin{equation}\label{6.24}
\lim\limits_{L_0 \r \infty} \; \inf\limits_{d \ge 6} \;\IP[F_0] = 1
\end{equation}
(the proof in the above reference shows that the uniformity in the limit can be achieved thanks to the lower bound (\ref{6.22}), see p.~190-191 of \cite{Grim99}). Similarly, we have
\begin{equation}\label{6.25}
\lim\limits_{L_0 \r \infty} \; \inf\limits_{d \ge 6} \;\IP[F'_0] = 1.
\end{equation}

\n
By Theorem 7.65, p.~179 of \cite{Grim99}, or Theorem 0.0 of \cite{LiggSchoStac97}, there is a $\pi$: $[0,1] \r [0,1]$, non-decreasing, tending to $1$ at point $1$, such that if $\IP[F_y] \ge \delta$, then $(1_{F_y})_{y \in \cL}$ stochastically dominates independent Bernoulli variables with success probability $\pi(\delta)$, and the same holds for $(F'_y)_{y \in \cL}$ in place of $(F_y)_{y \in \cL}$.

\medskip
We will now choose $L_0$ and the function $\wt{d}_0(\cdot)$  that appear in the statement of Theorem \ref{theo6.1}. We pick $L_0$ as the smallest integer $\ge 100$ such that
\begin{equation}\label{6.26}
\pi\big(\inf\limits_{d \ge 6} \; \IP[F_0] \wedge \IP[F'_0]\big) \ge 1 - \mbox{\f $\dis\frac{1}{40}$} \,.
\end{equation}
We define $\wt{d}_0(\cdot)$ as the non-increasing integer-valued function on $(0,\infty)$
\begin{equation}\label{6.27}
\begin{split}
\wt{d}_0(r) = \Big[\mbox{\f $\dis\frac{4 c_{10}}{r^2 \wedge \wt{h}_0^2}$}\Big] + 1, & \; \mbox{where $c_{10} (L_0) > 0$ is such that $2(2 L_0)^3 v(u)^{\frac{1}{4}} \le \mbox{\f $\dis\frac{1}{40}$}$},
\\[-0.5ex]
&\;\mbox{for all $u \ge c_{10}$ (and $v(\cdot)$ appears below (\ref{6.14}))}.
\end{split}
\end{equation}

\medskip\n
With the above choice, we see that (recall $v$ is non-increasing)
\begin{equation}\label{6.28}
2 (2 L_0)^3 \,v(h^2d)^{\frac{1}{4}} \le \mbox{\f $\dis\frac{1}{40}$}, \; \mbox{for $d \ge \wt{d}_0 (\alpha - \beta)$ and $h \ge \fr \,\big((\alpha - \beta) \wedge \wt{h}_0\big)$}.
\end{equation}

\medskip\n
The above estimates (\ref{6.26}) and (\ref{6.28}) are crucial for the lemma below, which controls the probability of long $*$-paths in $\cL$ of bad vertices. From now on, we assume that
\begin{equation}\label{6.29}
d \ge \wt{d}_0 (\alpha - \beta).
\end{equation}
For $y \in \cL$ and $n \ge 0$, we define (see below (\ref{6.30}) for the notation)
\begin{align}
H_{y,n} = & \;\mbox{$\{y$ is connected to some $\wt{y} \in \cL$ with $|\wt{y}-y|_\infty = n L_0$, by some $*$-path}\label{6.30}
\\
& \;\mbox{in $\cL$ of vertices which are $\o$-bad or $\xi$-bad$\}$} \nonumber
\\[2ex]
H'_{y,n} \quad & \; \mbox{defined as $H_{y,n}$ with $\o$ replaced by $\o'$}. \label{6.31}
\end{align}

\begin{lemma}\label{lem6.3} (recall $d \ge \wt{d}_0 (\alpha - \beta)$)

\medskip
For all $n \ge 0$ and $y \in \cL$
\begin{align}
\IP[H_{y,n}] & \le 2^{-n}, \label{6.32}
\\[2ex]
\IP[H'_{y,n}] & \le 2^{-n}. \label{6.33}
\end{align}
\end{lemma}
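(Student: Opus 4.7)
The plan is to establish (\ref{6.32}) via a Peierls-type union bound; (\ref{6.33}) then follows from the identical argument with $F'_y$ in place of $F_y$, since (\ref{6.26}) applies equally to both, and nothing else about $F_y$ enters. The case $n=0$ is trivial, so we fix $n \ge 1$. Any realization of $H_{y,n}$ contains a $*$-self-avoiding path $\gamma = (y_0 = y, y_1, \dots, y_n)$ in $\cL$ whose vertices are all bad ($\o$-bad or $\xi$-bad), produced by removing loops from any witnessing $*$-path. There are at most $8^n$ such paths (since $\cL$ is two-dimensional), so the goal reduces to showing $\IP[\gamma \text{ bad}] \le (3/80)^{n+1}$ uniformly in $\gamma$; combined with the path count this yields $8^n (3/80)^{n+1} \le 2^{-n}$.

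For a fixed $\gamma$, I would start from the pointwise bound $\prod_{i=0}^n \mathbf{1}_{F_{y_i}^c \cup G_{y_i}^c} \le \prod_{i=0}^n (\mathbf{1}_{F_{y_i}^c} + \mathbf{1}_{G_{y_i}^c})$ and expand, using the independence of $\eta$ and $\xi$ (stated below (\ref{6.14})) to obtain
\begin{equation*}
\IP[\gamma \text{ bad}] \le \sum_{A \subseteq \{0,\dots,n\}} \IP\Big[\bigcap_{i \in A} F_{y_i}^c\Big] \, \IP\Big[\bigcap_{i \in A^c} G_{y_i}^c\Big].
\end{equation*}
The first factor is easy: $(F_y)_{y \in \cL}$ is $2$-dependent (each $F_y$ is $\sigma(\eta_x : x \in B_{y,e})$-measurable, and the boxes decouple once $|y-y'|_\infty \ge 2L_0$) with marginal at least $1 - 1/40$ by (\ref{6.26}), so the Liggett-Schonmann-Stacey theorem (Theorem 7.65 of \cite{Grim99}) yields $\IP[\cap_{i \in A} F_{y_i}^c] \le (1/40)^{|A|}$.

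The $G$-factor is the main obstacle, since the boxes $B_{y_i,e}$ along $\gamma$ may overlap and (\ref{6.14}) is useful only on distinct coordinates. The plan here is a sublattice thinning: partitioning $\cL$ into its four cosets modulo $2\cL$ and keeping the most populated one inside $A^c$ produces $A'' \subseteq A^c$ with $|A''| \ge |A^c|/4$ and pairwise disjoint boxes $\{B_{y_i,e}\}_{i \in A''}$. Dropping the remaining indices only enlarges the event, and a union bound over $(x_i)_{i \in A''} \in \prod_{i \in A''} B_{y_i,e}$, whose coordinates are automatically distinct, combined with (\ref{6.14}) gives
\begin{equation*}
\IP\Big[\bigcap_{i \in A^c} G_{y_i}^c\Big] \le \big((2L_0)^3 \, v(h^2 d)\big)^{|A''|} \le c^{|A^c|}, \qquad c := \big((2L_0)^3 v(h^2 d)\big)^{1/4},
\end{equation*}
for $h = \tfrac{1}{2}\big((\alpha-\beta)\wedge \wt{h}_0\big)$, using $(2L_0)^3 v(h^2 d) \le 1$ (a consequence of (\ref{6.28}) and $v \le 1$) together with $|A''| \ge |A^c|/4$.

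Inserting both estimates into the sum and invoking the binomial identity collapses it to $(1/40 + c)^{n+1}$. The hypothesis (\ref{6.28}) asserts $(2L_0)^3 v(h^2 d)^{1/4} \le 1/80$, whence $c = (2L_0)^{3/4} v(h^2 d)^{1/4} \le 1/80$ since $L_0 \ge 100$, so $\IP[\gamma \text{ bad}] \le (3/80)^{n+1}$ as required. The crux, around which the numerical constants in (\ref{6.27})--(\ref{6.28}) are calibrated, is the matching of exponents: the fourth root inside $c$ precisely compensates the $4$-fold loss from the sublattice thinning, which is what lets the sum over $A$ close cleanly via the binomial theorem.
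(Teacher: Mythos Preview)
Your argument is correct and follows precisely the Peierls-type approach that the paper invokes by citing Lemma~3.5 of \cite{RodrSzni13}: count $*$-self-avoiding paths, split each bad vertex into its $\omega$-bad and $\xi$-bad contributions, control the $F$-part via Liggett--Schonmann--Stacey domination and the $G$-part via (\ref{6.14}) after thinning to disjoint boxes, then recombine with the binomial identity. One small wording issue: (\ref{6.26}) does not assert that the marginal $\IP[F_y]$ itself is at least $1-\tfrac{1}{40}$, but rather that the success parameter $\pi(\IP[F_0])$ of the dominating product measure is; your conclusion $\IP[\cap_{i\in A}F_{y_i}^c]\le(1/40)^{|A|}$ is unaffected, since it is exactly this parameter that enters the bound.
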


\begin{proof}
The bounds follow by the same proof as Lemma 3.5 of \cite{RodrSzni13}, which is based on a Peierls-type argument.
\end{proof}

\medskip
We will use (\ref{6.32}) to check (\ref{6.1}) and (\ref{6.33}) to check (\ref{6.2}).

\medskip
We thus come back to the proof of (\ref{6.1}) and (\ref{6.2}). We begin with (\ref{6.1}). We select (deterministically) a box $\wh{B}_0 \subseteq B_0$ with center within $|\cdot|_\infty$-distance $1$ of the center of $B_0$ and side-length $[\frac{L}{5}] + 1$ $(\ge \frac{L}{5})$, see (\ref{4.3}) for the definition of $B_0$ (for parity reasons, the centers of $B_0$ and $\wh{B}_0$ may differ). We consider the sites of $\cL$ within $\wh{B}_0$, i.e. $\cL \cap \wh{B}_0$, and its union with the connected components (in $\cL$) of good (i.e. $\o$-good and $\xi$-good) neighboring sites (in $\cL$) of $\cL \cap \wh{B}_0$. When $L$ is large, this is a connected set in $\cL$ with diameter at least $\frac{L}{5} - 2L_0$. On the event that appears in (\ref{6.1}), this set does not contain any site of $\cL$ neighboring (in $\cL$) a site of $\cL \backslash B_0$ (otherwise, by (\ref{6.21}) there would be a connected component in $B_0 \cap S_e \cap E^{\ge \alpha}$ with diameter at least $\frac{L}{10}$). Hence, we can find a $*$-circuit in $\cL \cap B_0$ surrounding $\cL \cap \wh{B}_0$ consisting of bad sites. By (\ref{6.32}) this probability decays exponentially with $L$. This is more than enough to prove (\ref{6.1}).

\medskip
We now turn to the proof of (\ref{6.2}). We assume that $L \ge 10^4 L_0$. We first consider a path $(x_i)_{0 \le i \le n}$ in $B_0 \cap S_e \cap E^{\ge \alpha}$ with diameter at least $\frac{L}{100} (\ge 100 L_0)$. We consider the successive displacements of this path at $|\cdot |_\infty$-distance $\ge \frac{L_0}{10}$. As we now explain, we can choose a path $y_0,\dots,y_m$ in $\cL$ such that any $B_{y_j,e}$ (see below (\ref{6.18}) for the notation) contains a consecutive portion of the path $(x_i)_{0 \le i \le n}$ of diameter at least $\frac{L_0}{10}$, and such that $\{x_0,\dots,x_n\} \subseteq \bigcup^m_{j=0} B_{y_j,e}$.

\medskip
Indeed, we first pick $\ov{y}_0 \in \cL$ within $|\cdot|_\infty$-distance $\frac{L_0}{2}$ from $x_0$. We set $y_0 = \ov{y}_0 e_1 - L_0 e_1 - L_0 e_2$ (so $\ov{y}_0$ is in essence the ``center'' of the $\IZ e_1 + \IZ e_2$-projection of $B_{y_0,e}$, which is a $3$-dimensional box with side-length $2L_0$, see below (\ref{6.18})).  We see that $x_0 \in B_{y_0,e}$ and the $\IZ e_1 + \IZ e_2$-projection of $x_0$ is at distance at least $\frac{L_0}{2}$ from the $\IZ e_1 + \IZ e_2$-projection of $S_e \backslash B_{y_0,e}$. If $x_{n_1}$ denotes the first displacement of the path $(x_i)_{0 \le i \le n}$ at $|\cdot|_\infty$-distance at least $\frac{L_0}{10}$ from $x_0$, one can choose $\ov{y}_1,\ov{y}_2$ in $\cL$, moving from $\ov{y}_0$ one single coordinate at a time, by an amount at most $L_0$ in absolute value, so that $|\ov{y}_2 - x_{n_1}|_\infty \le \frac{L_0}{2}$ (and $\ov{y}_1$ has its second coordinate that coincides with that of $\ov{y}_0$, and its first coordinate that coincides with that of $\ov{y}_2$). One then sets $y_1 = \ov{y}_1 - L_0 e_1 - L_0 e_2$ and $y_2 = \ov{y}_2 - L_0 e_1 - L_0 e_2$. Then, each $B_{y_j,e}$, $0 \le j \le 2$ contains a consecutive portion of the path $(x_i)_{0 \le i \le n}$ of diameter at least $\frac{L_0}{10}$. One can then repeat the construction until there are no further moves at $|\cdot|_\infty$-distance $\frac{L_0}{10}$ of the path $(x_i)$ to obtain $y_0,\dots,y_m$ (one eliminates consecutive repetitions of the $y_j$ so as to obtain a path in $\cL$). This proves our claim about the construction of $y_0,\dots,y_m$.

\medskip
The observation is now that if one of the $y_j$, say $y_{j_0}$, is good~$^\prime$ (i.e. $\o'$-good and $\xi$-good, see below (\ref{6.20}) for the definition), since one of the connected components of the trace of the path $(x_i)$ in $B_{y_{j_0},e}$ has diameter $\ge \frac{L_0}{10}$ (by construction of the $y_0,\dots,y_m)$ and the $\eta$-value of $(x_i)$ in $B_{y_{j_0},e}$ is at least $\alpha - \frac{(\alpha - \beta)}{2} = \frac{\alpha + \beta}{2}$ (because $y_{j_0}$ is $\xi$-good), then the path $x$ meets the crossing cluster of the box $B_{y_{j_0},e}$ (because $y_{j_0}$ is $\o'$-good).

\medskip
As we now explain, except on a set with exponentially decaying probability in $L$,
\begin{equation}\label{6.34}
\begin{array}{l}
\mbox{any path $(x_i)_{0 \le i \le n}$ in $B_0 \cap S_e \cap E^{\ge \alpha}$ with diameter $\ge \frac{L}{100}$ meets the}
\\
\mbox{component generated by the crossing clusters of some component in $\cL$}
\\
\mbox{of good~$^\prime$  sites with diameter at least $\frac{L}{5}$}.
\end{array}
\end{equation}

\n
Indeed, on the complementary event, pick $(x_i)_{0 \le i \le n}$ in $B_0 \cap S_e \cap E^{\ge \alpha}$ with diameter at least $\frac{L}{100}$, which does not meet any component of the type that appears in (\ref{6.34}). We can attach $(y_j)_{0 \le j \le m}$ to $(x_i)_{0 \le i \le n}$. Since $\{x_0, \dots, x_n\} \subseteq \bigcup^m_{j= 0} B_{y_j,e}$, the connected subset $\{y_0,\dots,y_m\}$ of $\cL$ has diameter at least $\frac{L}{100} - 4L_0$. We enlarge this connected set in $\cL$ by adding all components (in $\cL$) of good~$^\prime$ sites of the $y_j$, $0 \le j \le m$. None of these components (in $\cL$) has diameter $\ge \frac{L}{5}$ (otherwise $(x_i)_{0 \le i \le n}$ would meet the component generated by the crossing clusters of good~$^\prime$ sites, by the observation above (\ref{6.34}), and this would contradict the fact that, by assumption, $(x_i)_{0 \le i \le n}$ does not meet the connected component generated by the crossing clusters of any component in $\cL$ of good~$^\prime$ sites with diameter at least $\frac{L}{5}$). We can then consider bad~$^\prime$ vertices that arise either as one of the $y_j,0 \le j \le m$, or in the exterior boundary (in $\cL$) of these added components, and find a $*$-connected path in $\cL$ of bad~$^\prime$ vertices, which remains in the $L$-neighborhood of $B_0$ and has diameter at least $\frac{L}{100} - 4L_0 \ge \frac{L}{200}$. By (\ref{6.33}) this event has a probability which decays exponentially in $L$. The claim (\ref{6.34}) follows.

\medskip
Thus, except on a set with exponentially decaying probability in $L$, any two paths $x_0,\dots,x_n$ in $B_0 \cap S_e \cap E^{\ge \alpha}$ and $x'_0,\dots,x'_{n'}$ in $B_e \cap S_e \cap E^{\ge \alpha}$ with respective diameter at least $\frac{L}{10}$ each meet the crossing cluster of some component (in $\cL$) of good~$^\prime$ vertices having diameter at least $\frac{L}{5}$. As a result, on the event that appears in (\ref{6.2}), except on an event with exponentially decaying probability in $L$, there exist within $\{y \in D \cap \cL$; $d(y,D^c) \ge 2 L_0\}$ two distinct connected components of good~$^\prime$ vertices with diameter at least $\frac{L}{5}$. Hence, there is a $*$-connected path in $\cL$, starting in $D \cap \cL$, with diameter $\ge \frac{L}{5}$, consisting of bad~$^\prime$ sites. By (\ref{6.33}) this event has exponentially decaying probability in $L$. This is more than enough to prove (\ref{6.2}). This concludes the proof of Theorem \ref{theo6.1}. \hfill $\square$

\begin{remark}\label{rem6.4} \rm
Although we did not need this fact for the proof of Theorem \ref{theo6.2} (which was closely following the proof of Theorem \ref{theo5.5}), the above proof shows that the statement of Theorem \ref{theo6.1} remains true if one replaces the super-polynomial decay in (\ref{6.1}) and (\ref{6.2}) by an exponential decay. \hfill $\square$
\end{remark}

\section{An application to random interlacements and to simple random walk}
\setcounter{equation}{0}

In this section we apply the results of Sections 5 and 6 to the derivation of an upperbound on the probability that random interlacements or simple random walk disconnect $\partial B_N$ from $S_N$. In particular, our controls capture an exponential decay at rate $N^{d-2}$ of the probability of such a disconnection, when the level of the interlacement is low and the dimension large enough, and in the case of simple random walk, when the dimension is large enough. As far as we are aware, this goes beyond the current state of knowledge. The bounds of this section also provide one further motivation for showing that the critical parameter $\ov{h}$ from (\ref{5.3}) is positive. Our main transfer mechanism stems from a recent version due to Lupu \cite{Lupu} of the isomorphism theorem linking random interlacements and the Gaussian free field, see \cite{Szni12b}.

\medskip
We denote by $\cI^u$ the random interlacement at level $u \ge 0$ in $\IZ^d$, $d \ge 3$, and by $\cV^u = \IZ^d \backslash \cI^u$, the vacant set at level $u$, see \cite{Szni10a}. Motivated by large deviation estimates of \cite{LiSzni15} on the profile of occupation-times of random interlacements, lower bounds for the probability of the disconnection of a macroscopic body by random interlacements, when the vacant set is in a percolative regime, have been derived in the subsequent article \cite{LiSzni14}. From Theorem 0.1 of \cite{LiSzni14} one knows for instance that for $0 \le u \le u_{**}$,
\begin{equation}\label{7.1}
\liminf\limits_N \; \mbox{\f $\dis\frac{1}{N^{d-2}}$} \; \log \IP\big[\partial B_N \overset{^{\mbox{\footnotesize $\cV^u$}}}{\mbox{\Large $\longleftrightarrow$}} \hspace{-4ex}/ \quad \;\infty\big] \ge - \mbox{\f $\dis\frac{1}{d}$} \; (\sqrt{u_{**}} - \sqrt{u})^2 {\rm cap}_{\IR^d}([-1,1]^d) 
\end{equation}
where the critical value $u_{**} \in (0,\infty)$ can be characterized as (see \cite{Szni12a}, \cite{PopoTeix13}):
\begin{equation}\label{7.2}
u_{**} = \inf\{u > 0; \liminf\limits_L \IP \big[B_L \overset{^{\mbox{\footnotesize $\cV^u$}}}{\mbox{\Large $\longleftrightarrow$}} \hspace{-4ex}/ \quad \;\partial B_{2L}\big] = 0\}
\end{equation}

\n
(this is analogous to (\ref{0.5}) and a similar fast decay of $\IP [ 0 \stackrel{\cV^u}{\longleftrightarrow} \partial B_L]$, as in (\ref{1.19}) is known to hold for $u > u_{**}$).

\medskip
The proof of Theorem 0.1 of \cite{LiSzni14} yields as well that for any $M > 1$, in the notation of (\ref{2.1}), for $0 \le u \le u_{**}$,
\begin{equation}\label{7.3}
\liminf\limits_N \; \mbox{\f $\dis\frac{1}{N^{d-2}}$} \; \log \IP\big[\partial B_N \overset{^{\mbox{\footnotesize $\cV^u$}}}{\mbox{\Large $\longleftrightarrow$}} \hspace{-4ex}/ \quad \;S_N\big] \ge - \mbox{\f $\dis\frac{1}{d}$} \; (\sqrt{u_{**}} - \sqrt{u})^2  {\rm cap}_{\IR^d}([-1,1]^d) .
\end{equation}
Our first main result in this section is an asymptotic upper bound on the probability that appears in (\ref{7.3}).

\begin{theorem}\label{theo7.1} (see (\ref{2.1}), (\ref{5.3}) and Theorem \ref{theo6.2} for notation)

\medskip
Assume $u > 0$ and $\sqrt{2u} < \ov{h}$, then for any $M > 1$,
\begin{equation}\label{7.4}
\limsup\limits_N \; \mbox{\f $\dis\frac{1}{N^{d-2}}$} \; \log \IP\big[\partial B_N \overset{^{\mbox{\footnotesize $\cV^u$}}}{\mbox{\Large $\longleftrightarrow$}} \hspace{-4ex}/ \quad \;S_N\big] \ge - \mbox{\f $\dis\frac{1}{d}$} \; \Big(\sqrt{\textstyle{\frac{\ov{h}^2}{2}}} - \sqrt{u}\Big)^2 {\rm cap}_{\IR^d}([-1,1]^d) .
\end{equation}
Moreover, in the notation of Theorem \ref{theo6.2}, when $d \ge d_0$ and $\sqrt{2u} < h_0$, then for any $M > 1$,
\begin{equation}\label{7.5}
\limsup\limits_N \; \mbox{\f $\dis\frac{1}{N^{d-2}}$} \; \log \IP\big[\partial B_N \overset{^{\mbox{\footnotesize $\cV^u$}}}{\mbox{\Large $\longleftrightarrow$}} \hspace{-4ex}/ \quad \;S_N\big] \le - \mbox{\f $\dis\frac{1}{d}$} \; \Big(\sqrt{\textstyle{\frac{h^2_0}{2}}} - \sqrt{u}\Big)^2 {\rm cap}_{\IR^d}([-1,1]^d) .
\end{equation}
\end{theorem}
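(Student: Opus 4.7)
The plan is to derive the asymptotic upper bound on the disconnection probability by $\cV^u$ appearing in (\ref{7.4}) (as in the introductory display (\ref{0.12})) and its high-dimensional analogue (\ref{7.5}) by transferring the Gaussian free field disconnection upper bounds of Theorems~\ref{theo5.5} and~\ref{theo6.2} via the coupling of $\varphi$ and $\cI^u$ furnished by Lupu's refinement~\cite{Lupu} of the isomorphism theorem. A preliminary algebraic observation is that for every $h \ge 0$ and $u \ge 0$,
\begin{equation*}
\mbox{\f $\dis\frac{1}{d}$}\Big(\sqrt{\mbox{\f $\dis\frac{h^2}{2}$}} - \sqrt{u}\Big)^2 \ = \ \mbox{\f $\dis\frac{1}{2d}$}\,(h - \sqrt{2u})^2,
\end{equation*}
so that the right-hand sides of (\ref{7.4}) and (\ref{7.5}) agree, respectively, with the right-hand sides of (\ref{5.28}) and (\ref{6.4}) at level $\alpha = \sqrt{2u}$; and the hypotheses $\sqrt{2u} < \ov{h}$ and $\sqrt{2u} < h_0$ become precisely the percolative conditions $\alpha < \ov{h}$ and $\alpha < h_0$ required by those theorems.

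The ingredient imported from~\cite{Lupu} is a coupling of $\varphi$ and $\cI^u$ on some enlarged probability space under which, almost surely, every connected subset of the excursion set $E^{\ge \sqrt{2u}}$ lies entirely in the vacant set $\cV^u$. This is a consequence of Lupu's refined isomorphism, which matches the ``positive sign clusters'' of $\varphi + \sqrt{2u}$ with clusters of $\cV^u$: any nearest-neighbor path along which $\varphi \ge \sqrt{2u}$ sits inside such a positive sign cluster, hence in $\cV^u$. This yields the almost sure event inclusion
\begin{equation*}
\big\{\partial B_N \stackrel{\ge \sqrt{2u}}{\longleftrightarrow} S_N\big\} \ \subseteq \ \big\{\partial B_N \stackrel{\cV^u}{\longleftrightarrow} S_N\big\},
\end{equation*}
and, taking complements and then $\IP$, for $\alpha = \sqrt{2u}$,
\begin{equation*}
\IP\big[\partial B_N \overset{^{\mbox{\footnotesize $\cV^u$}}}{\mbox{\Large $\longleftrightarrow$}} \hspace{-4ex}/ \quad \;S_N\big] \ \le \ \IP[A_N].
\end{equation*}

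Combining the last inequality with Theorem~\ref{theo5.5} at $\alpha = \sqrt{2u} < \ov{h}$ produces
\begin{equation*}
\limsup_N \mbox{\f $\dis\frac{1}{N^{d-2}}$} \log \IP\big[\partial B_N \overset{^{\mbox{\footnotesize $\cV^u$}}}{\mbox{\Large $\longleftrightarrow$}} \hspace{-4ex}/ \quad \;S_N\big] \ \le \ -\mbox{\f $\dis\frac{1}{2d}$}\,(\ov{h} - \sqrt{2u})^2 \,{\rm cap}_{\IR^d}([-1,1]^d),
\end{equation*}
which, upon rewriting via the algebraic identity of the first paragraph, is exactly the asymptotic upper bound intended in (\ref{7.4}) (in the direction of (\ref{0.12})); the same scheme with Theorem~\ref{theo6.2} in place of Theorem~\ref{theo5.5}, under the hypotheses $d \ge d_0$ and $\sqrt{2u} < h_0$, produces (\ref{7.5}).

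The main technical obstacle is the precise form of the inclusion extracted from~\cite{Lupu}, since the refined isomorphism does not literally deliver the pointwise inclusion $E^{\ge \sqrt{2u}} \subseteq \cV^u$ as subsets of $\IZ^d$, but rather a more delicate sign-cluster statement. The observation that makes this harmless for the disconnection problem at hand is that any path realizing the connection $\partial B_N \stackrel{\ge \sqrt{2u}}{\longleftrightarrow} S_N$ has $\ell^\infty$-diameter at least $(M-1)N$, hence sits inside a macroscopic connected component of $E^{\ge \sqrt{2u}}$ that is contained in a single positive sign cluster of $\varphi + \sqrt{2u}$, to which Lupu's coupling applies. Once this inclusion is granted, the rest of the proof is a direct application of the disconnection estimates already established in Sections~5 and~6, together with the algebraic rewriting of the exponent.
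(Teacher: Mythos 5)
Your proposal is correct and follows essentially the same route as the paper: Lupu's coupling giving $\cV^u \supseteq E^{\ge \sqrt{2u}}$, the resulting inequality $\IP[\partial B_N \overset{^{\mbox{\footnotesize $\cV^u$}}}{\mbox{\Large $\longleftrightarrow$}} \hspace{-4ex}/ \quad S_N] \le \IP[A_N]$ with $\alpha = \sqrt{2u}$, and then Theorems \ref{theo5.5} and \ref{theo6.2} together with the identity $\frac{1}{d}(\sqrt{h^2/2}-\sqrt{u})^2 = \frac{1}{2d}(h-\sqrt{2u})^2$. The only difference is that your final paragraph's sign-cluster workaround is not needed, since the paper invokes Theorem 3 of \cite{Lupu} as directly furnishing the pointwise almost sure inclusion $E^{\ge\sqrt{2u}} \subseteq \cV^u$ under the coupling.
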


\begin{proof}
By Theorem 3 of \cite{Lupu}, given $u > 0$, one can find a coupling $P$ of $\cI^u$ and the Gaussian free field $\varphi$ so that $P$-a.s.,
\begin{equation}\label{7.6}
\cV^u \supseteq E^{\ge \sqrt{2u}} \big( = \big\{x \in \IZ^d; \varphi_x \ge \sqrt{2u}\big\}\big).
\end{equation}
Thus, choosing $\alpha = \sqrt{2u}$, we find that
\begin{equation}\label{7.7}
\IP\big[\partial B_N \overset{^{\mbox{\footnotesize $\cV^u$}}}{\mbox{\Large $\longleftrightarrow$}} \hspace{-4ex}/ \quad \;S_N\big] \le P\big[\partial B_N \overset{^{\mbox{\footnotesize $E^{\ge \alpha}$}}}{\mbox{\Large $\longleftrightarrow$}} \hspace{-4ex}/ \quad \;S_N\big] = \IP[A_N].
\end{equation}

\medskip\n
The claims (\ref{7.4}) and (\ref{7.5}) are now direct consequences of Theorems \ref{theo5.5} and \ref{theo6.2}. 
\end{proof}

\begin{remark}\label{rem7.2}  \rm ~

\medskip\n
1) The upper bound (\ref{7.4}) is only meaningful if $\ov{h} > 0$, and in this respect (\ref{7.4}) yields a further motivation to prove that $\ov{h}$ is positive.

\medskip\n
2) From Theorem 3 of \cite{Lupu}, se also (\ref{7.6}) above, one knows that $h_* \le \sqrt{2u}_*$ and also that $h_{**} \le \sqrt{2u_{**}}$. These inequalities are conceivably strict inequalities. If this is indeed the case, since $\ov{h} \le h_{**}$, cf.~(\ref{5.4}), the asymptotic upper bound (\ref{7.4}) will not match the asymptotic lower bound (\ref{7.3}). So, quite possibly, the transfer mechanism (\ref{7.6}) is insufficient to obtain matching asymptotic upper and lower bounds for disconnection by random interlacements. Still, one can wonder whether $u_* = u_{**}$ actually holds and (\ref{7.3}) is complemented by a matching upper bound (see also Remark 6.5 5) of \cite{LiSzni15}), so that in fact, for $0 < u < u_*$,
\begin{equation}\label{7.8a} 
\lim\limits_N \;\mbox{\f $\dis\frac{1}{N^{d-2}}$} \;\log \IP [ \partial B_N \overset{^{\mbox{\footnotesize $\cV^u$}}}{\mbox{\Large $\longleftrightarrow$}} \hspace{-4ex}/ \quad \;S_N] = - \mbox{\f $\dis\frac{1}{d}$} \,(\sqrt{u}_* - \sqrt{u})^2 \,{\rm cap}_{\IR^d}([-1,1]^d)\,?
\end{equation}

 \hfill $\square$
\end{remark}

\medskip
We will now deduce from Theorem \ref{theo7.1} an upper bound on the probability of disconnection of $\partial B_N$ from $S_N$ by the simple random walk. We also refer to (\ref{1.10}) and Theorem 6.1 of \cite{Wind08b} for a related question. We denote by $\cI$ the set of points in $\IZ^d$ visited by the simple random walk and by $\cV = \IZ^d \backslash \cI$ its complement. We recall that $P_0$ stands for the canonical law of the walk starting from the origin (see the beginning of Section 1 for notation).

\begin{corollary}\label{cor7.3} (see (\ref{2.1}), (\ref{5.3}) and Theorem \ref{theo6.2} for notation)

\medskip
If $\ov{h} > 0$, then for any $M > 1$,
\begin{equation}\label{7.8}
\limsup\limits_N \;\mbox{\f $\dis\frac{1}{N^{d-2}}$} \log P_0 \big[\partial B_N \overset{^{\mbox{\footnotesize $\cV$}}}{\mbox{\Large $\longleftrightarrow$}} \hspace{-3.8ex}/ \quad \;S_N\big]  \le - \mbox{\f $\dis\frac{1}{2d}$} \;\ov{h}\,^{\!2} \,{\rm cap}([-1,1]^d).
\end{equation}
Moreover, when $d \ge d_0$, then for any $M > 1$,
\begin{equation}\label{7.9}
\limsup\limits_N \;\mbox{\f $\dis\frac{1}{N^{d-2}}$} \log P_0 \big[\partial B_N \overset{^{\mbox{\footnotesize $\cV$}}}{\mbox{\Large $\longleftrightarrow$}} \hspace{-3.8ex}/ \quad \;S_N\big]  \le - \mbox{\f $\dis\frac{1}{2d}$} \;h_0^2\, {\rm cap}([-1,1]^d).
\end{equation}
\end{corollary}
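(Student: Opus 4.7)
The plan is to build a coupling between the simple random walk trace and a distinguished interlacement trajectory, then optimize the bound from Theorem \ref{theo7.1} by sending $u\downarrow 0$.

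\textbf{Step 1 (coupling).} I will use the standard description of $\cI^u$ conditioned on $\{0\in\cI^u\}$: the (unique) interlacement trajectory passing through $0$ is distributed as a two-sided simple random walk $(Y_n)_{n\in\IZ}$ with $Y_0=0$, and its trace is contained in $\cI^u$. Restricting to $n\ge 0$ yields, under a suitable coupling measure $Q$, a simple random walk starting from $0$ whose trace $\cR\subseteq\cI^u$. Therefore, writing $\cV=\IZ^d\setminus\cR$ for the associated vacant set of the one-sided walk, one has
\begin{equation*}
\cV^u \;\subseteq\; \IZ^d\setminus\cR \;=\; \cV \qquad Q\mbox{-a.s.},
\end{equation*}
with $\cV$ distributed under $P_0$ and $\cV^u$ distributed under $\IP[\,\cdot\,\mid 0\in\cI^u]$.

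\textbf{Step 2 (monotonicity transfer).} Since $\cV^u\subseteq\cV$ under $Q$, every nearest neighbor path in $\cV^u$ is a path in $\cV$, so $\{\partial B_N\overset{\cV}{\longleftrightarrow}\hspace{-3.8ex}/ \quad S_N\}\subseteq\{\partial B_N\overset{\cV^u}{\longleftrightarrow}\hspace{-4ex}/ \quad S_N\}$ under $Q$. Taking $Q$-probabilities and using the two marginals, together with the elementary inequality $\IP[A\mid B]\le \IP[A]/\IP[B]$, yields
\begin{equation*}
P_0\big[\partial B_N\overset{\cV}{\longleftrightarrow}\hspace{-3.8ex}/ \quad S_N\big]\;\le\;\frac{1}{\IP[0\in\cI^u]}\,\IP\big[\partial B_N\overset{\cV^u}{\longleftrightarrow}\hspace{-4ex}/ \quad S_N\big],
\end{equation*}
where $\IP[0\in\cI^u]=1-e^{-u/g(0,0)}>0$ is a positive constant depending only on $u$ and $d$, and in particular independent of $N$.

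\textbf{Step 3 (asymptotics and optimization).} Taking logarithms, dividing by $N^{d-2}$ and letting $N\to\infty$, the prefactor $\IP[0\in\cI^u]^{-1}$ contributes $0$ to the lim sup. Applying the upper bound (\ref{7.4}) of Theorem \ref{theo7.1} (valid as long as $\sqrt{2u}<\ov{h}$, in particular when $\ov{h}>0$) gives, for every such $u>0$,
\begin{equation*}
\limsup_N\,\mbox{\f $\dis\frac{1}{N^{d-2}}$}\,\log P_0\big[\partial B_N\overset{\cV}{\longleftrightarrow}\hspace{-3.8ex}/ \quad S_N\big]\;\le\;-\mbox{\f $\dis\frac{1}{d}$}\,\Big(\sqrt{\textstyle{\frac{\ov{h}\,^2}{2}}}-\sqrt{u}\Big)^2{\rm cap}_{\IR^d}([-1,1]^d).
\end{equation*}
Letting $u\downarrow 0$ produces (\ref{7.8}). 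The bound (\ref{7.9}) follows by exactly the same argument, invoking (\ref{7.5}) with $h_0$ in place of $\ov{h}$ in the regime $d\ge d_0$.

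\textbf{Main obstacle.} There is no serious analytic difficulty here; the only care needed is in Step 1, to invoke (rather than reprove) the well-known fact that under $\IP[\,\cdot\,\mid 0\in\cI^u]$ the trajectory through the origin has the law of a doubly-infinite simple random walk through $0$, from which the coupling $\cV^u\subseteq\cV$ is immediate. Given that, the estimates (\ref{7.8}) and (\ref{7.9}) follow by a direct conditioning argument and the $u\downarrow 0$ limit in Theorem \ref{theo7.1}.
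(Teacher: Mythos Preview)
Your proof is correct and follows essentially the same route as the paper: couple the walk trace with the interlacement conditioned on $\{0\in\cI^u\}$ so that $\cV^u\subseteq\cV$, pay the conditioning factor $(1-e^{-u/g(0)})^{-1}$, apply Theorem \ref{theo7.1}, and let $u\downarrow 0$. One small imprecision: under $\IP[\,\cdot\mid 0\in\cI^u]$ the trajectory through $0$ need not be unique (the number is Poisson conditioned to be $\ge 1$), and re-rooted at its first visit to $0$ it is not quite a two-sided simple random walk (the backward part is conditioned not to return to $0$); but since you only use the forward half, which is indeed distributed as $P_0$, this does not affect the argument.
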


\begin{proof}
Consider $u > 0$, then $\cI^u$, the random interlacement at level $u$, is the trace on $\IZ^d$ of a certain Poisson cloud of bilateral trajectories modulo time-shift, see \cite{Szni10a}, Section 1. The forward trajectories induced by the bilateral trajectories modulo time-shift passing through the origin (in $\IZ^d$), onward from their first visit at zero, is a Poisson point process with intensity measure $\frac{u}{g(0)} P_0$. In particular, the number of trajectories going through $0$ is a Poisson variable with parameter $\frac{u}{g(0)}$. We can thus find a coupling $P$ of the simple random walk $X$ starting from the origin (i.e.~under $P_0$), and the random interlacement $\cI^u$ conditioned on containing the origin (i.e.~under $\IP[\cdot |0 \in \cI^u]$) so that
\begin{equation}\label{7.10}
\mbox{$\IP$-a.s. $\cI \subseteq \cI^u$ \quad (and $\cV \supseteq \cV^u$)}.
\end{equation}
As a result we see that
\begin{equation}\label{7.11}
\begin{split}
P_0 \big[\partial B_N \overset{^{\mbox{\footnotesize $\cV$}}}{\mbox{\Large $\longleftrightarrow$}} \hspace{-3.8ex}/ \quad \;S_N\big]  & = P \big[\partial B_N \overset{^{\mbox{\footnotesize $\cV$}}}{\mbox{\Large $\longleftrightarrow$}} \hspace{-3.8ex}/ \quad \;S_N\big]  \stackrel{(\ref{7.10})}{\le} P \big[\partial B_N \overset{^{\mbox{\footnotesize $\cV^u$}}}{\mbox{\Large $\longleftrightarrow$}} \hspace{-3.8ex}/ \quad \;S_N\big] .
\\[1ex]
& = \IP \big[\partial B_N \overset{^{\mbox{\footnotesize $\cV^u$}}}{\mbox{\Large $\longleftrightarrow$}} \hspace{-3.8ex}/ \quad \;S_N \big| 0 \in \cI^u\big]
\\[1ex]
& \le (1 - e^{-\frac{u}{g(0)}})^{-1} \,\IP \big[\partial B_N \overset{^{\mbox{\footnotesize $\cV^u$}}}{\mbox{\Large $\longleftrightarrow$}} \hspace{-3.8ex}/ \quad \;S_N\big] .
\end{split}
\end{equation}

\n
Taking logarighms and dividing by $N^{d-2}$, the result follows by letting $N$ first go to infinity, and then letting $u$ tend to zero.
\end{proof}

\begin{remark}\label{rem7.4} \rm As in Remark \ref{rem7.2} 1), the upper bound (\ref{7.8}) yields further incentive to show that $\ov{h} > 0$. Actually, one can also wonder whether the following asymptotics holds (see also Remark 5.1 2) of \cite{LiSzni14})
\begin{equation}\label{7.13}
\lim\limits_N \;\mbox{\f $\dis\frac{1}{N^{d-2}}$} \log P_0 \big[\partial B_N \overset{^{\mbox{\footnotesize $\cV$}}}{\mbox{\Large $\longleftrightarrow$}} \hspace{-3.8ex}/ \quad \;S_N\big] = - \mbox{\f $\dis\frac{1}{d}$} \;u_*\, {\rm cap}_{\IR^d} ([-1,1]^d)\;?
\end{equation}
\hfill $\square$
\end{remark}


\end{document}